\documentclass{svjour3}
\usepackage{amsmath,amsfonts}
\usepackage{amssymb,amsopn,amscd}
\usepackage{bbm}
\usepackage{graphicx}
\usepackage{epsfig}
\usepackage{color}
\usepackage{subfigure}
\usepackage{bm}
\usepackage[normalem]{ulem}

\newcommand{\bard}[1]{{#1}}

\newcommand{\review}[1]{{{{#1}}}}

\graphicspath{{Figures}}

\newcommand{\der}[2]{ \frac{\text{d} #1}{\text{d} #2} }  %% derivee
\newcommand{\R}{\mathbbm{R}}
\newcommand{\Vect}{\mathrm{Vect}}

\begin{document}
	\title{Finite-size and correlation-induced effects in Mean-field Dynamics}
	\author{Jonathan Touboul\footnotemark[1] \footnotemark[2] \and G. Bard Ermentrout\footnotemark[2]} 
	
	\titlerunning{Finite-size and Correlation Effects}        % if too long for running head
	
	\institute{J. Touboul \at
	              \footnotemark[1]NeuroMathComp Laboratory \\
								INRIA/ ENS Paris\\
								23 Avenue d'Italie\\
								75013 Paris
	              Tel.: +33-1 39 63 57 10\\
	              Fax:  +33-4 92 38 78 45\\
	              \email{jonathan.touboul@sophia.inria.fr}           %  \\
	%             \emph{Present address:} of F. Author  %  if needed
	           \and
	           \footnotemark[2] G. Bard Ermentrout and Jonathan Touboul \at
	              Department of Mathematics,\\
	 							University of Pittsburgh, \\
								Pittsburgh PA, \\
								USA.
	}

	\date{Received: date / Accepted: date}
	% The correct dates will be entered by the editor

	\maketitle
		% 
		% \renewcommand{\thefootnote}{\fnsymbol{footnote}}
		% \footnotetext[2]{Department of Mathematics, University of Pittsburgh, Pittsburgh PA, USA}
		% \footnotetext[3]{NeuroMathComp Laboratory, INRIA, Sophia Antipolis, France \\ {\tt jonathan.touboul@sophia.inria.fr}}
		% \renewcommand{\thefootnote}{\arabic{footnote}}
	
\begin{abstract}
The brain's activity is characterized by the interaction of a very large number of neurons that are strongly affected by noise. However, signals  often arise at macroscopic scales integrating the effect of many neurons into a reliable pattern of activity. In order to study such large neuronal assemblies, one is often led to derive mean-field limits summarizing the effect of the interaction of a large number of neurons into an effective signal. Classical mean-field approaches consider the evolution of a deterministic variable, the mean activity, thus neglecting the stochastic nature of neural behavior. In this article, we build upon \bard{two recent approaches}  that include correlations and higher order moments in mean-field equations, and study how these stochastic effects influence the solutions of the mean-field equations, both in the limit of an infinite number of neurons and for large yet finite networks. \review{We introduce a new model, the infinite model, which arises from both equations by a rescaling of the variables and,  which is invertible for finite-size networks, and hence, provides equivalent equations to those previously derived models.  The study of this model allows us to understand qualitative behavior of such large-scale networks.} We show that, though the solutions of the deterministic mean-field equation constitute uncorrelated solutions of the new mean-field equations, the stability properties of limit cycles are modified by the presence of correlations, and additional non-trivial behaviors including periodic orbits appear when there were none in the mean field.  The origin of all these behaviors is then explored in finite-size networks where interesting mesoscopic scale effects appear. This study leads us to show that the infinite-size system appears as a singular limit of the network equations, and for any finite network, the system will differ from the infinite system. 
\end{abstract}

\paragraph{Keywords} 
	Neural Mass Equations; Dynamical Systems; Markov Process; Master Equation; Moment equations; Bifurcations; Wilson and Cowan system.

% \begin{AMS}
%  92B25,65P30,37M20,34C05,34C23,34C60,92B20,92C20
% \end{AMS}
% 
% \pagestyle{myheadings}
% \thispagestyle{plain}
% \markboth{Jonathan \textsc{Touboul} and G.~Bard \textsc{Ermentrout}}

\section*{Introduction}
Cortical activity manifests highly complex behaviors which is often strongly characterized by the presence of noise.  Reliable responses to specific stimuli often arise at the level of population assemblies (cortical areas or cortical columns) featuring a very large number of neuronal cells presenting a highly nonlinear behavior and that are interconnected in a very intricate fashion. Understanding the global behavior of large-scale neural assemblies has been a great endeavor in the past decades. Most models describing the emergent behavior arising from the interaction of neurons in large-scale networks have relied on continuum limits ever since the seminal work of Wilson and Cowan and Amari \cite{amari:72,amari:77,wilson-cowan:72,wilson-cowan:73}. Such models tend to represent the activity of the network through a macroscopic variable, the population-averaged firing rate, that is generally assumed to be deterministic. Many analytical and numerical results and properties have been derived from these equations and related to cortical phenomena, for instance for the problem of  spatio-temporal pattern formation in spatially extended models (see e.g. \cite{coombes-owen:05,ermentrout:98,ermentrout-cowan:79,laing-troy-etal:02}). 

This approach tends to implicitly make the assumption that correlations in the activity do not modify the behavior of the system in large populations. However, the stochasticity of the system, and in particular the presence of correlations in the activity may considerably affect the dynamics in such nonlinear systems, and these can be even more important as the network size increases. These models therefore make the assumption that averaging effects counterbalance the prominent noisy aspect of \emph{in vivo} firing, a feature that can dramatically affect finite-sized network activity, and, thus, that the emergent behavior of a cortical column is deterministic.

However, increasingly many researchers now believe that the different intrinsic or extrinsic noise sources are part of the neuronal signal, and rather than a pure disturbing effect related to the intrinsically noisy biological substrate of the neural system, they suggest that noise conveys information that can be an important principle of brain function \cite{rolls-deco:10}. At the level of a single cell, various studies have shown that the firing statistics are highly stochastic with probability distributions close to Poisson distributions \cite{softky-koch:93}, and several computational studies confirmed the stochastic nature of single-cells firings \cite{brunel-latham:03,plesser:99,touboul-faugeras:07b,touboul-faugeras:08}.  How variability at the single neuron level affects dynamics of cortical networks is less established so far. Theoretically, the interaction of a large number of neurons that fire stochastic spike trains can naturally produce correlations in the firing activity of the population considered. For instance power-laws in the scaling of avalanche-size distributions have been studied both via models and experiments~\cite{beggs-plenz:04,benayoun-cowan:10,levina-etal:09,touboul-destexhe:10}. In  these regimes the randomness plays a central role. 

A different approach has been to study regimes where the activity is uncorrelated. A number of computational studies on the integrate-and-fire neuron showed that under certain conditions neurons in large assemblies end up firing asynchronously, producing null correlations \cite{abbott-van-vreeswijk:93,amit-brunel:97,brunel-hakim:99}. In these regimes, the correlations in the firing activity decrease towards zero in the limit where the number of neurons tends to infinity. The emergent global activity of the population in this limit is deterministic, and evolves according to a mean-field firing rate equation. However, these states only exist in the limit where the number of neurons is infinite, and such asynchronous states do not necessarily exist in cortical areas or in computational models, and even when they exist, they are not necessarily stable. This raises the question of how the finiteness of the number of neurons impacts the behavior of asynchronous states, and how to study non-asynchronous behaviors. The study of finite-size effects in the case of asynchronous states is generally not reduced to the study of mean firing rates and can include higher order moments of firing activity \cite{mattia-del-giudice:02,elboustani-destexhe:09,brunel:00}. In order to go beyond asynchronous states and take into account the stochastic nature of the firing and how this activity scales as the network size increases, different approaches have been developed, such as the population density method and related approaches \cite{cai-tao-etal:04}. Most of these approaches involve expansions in terms of the moments of the resulting random variable, and the moment hierarchy needs to be truncated which is a quite hard task that can raise a number of technical issues (see e.g.\cite{ly-tranchina:07}). A recent approach overcomes this truncation difficulty by considering the mean-field behavior as a stochastic process and therefore treating the rigorous limit equation of the interaction of many stochastic neurons as a fixed point equation in the space of stochastic processes, making use of the powerful tools of stochastic limit theorems and large deviation techniques (see \cite{faugeras-touboul-etal:09}). 

{In order to study the effect of the stochastic nature of the firing in large network, many authors strived to introduce randomness in a tractable form. The models we will be interested in are based on the definition of a Markov chain governing the firing dynamics of the neurons in the network, where the transition probability satisfies a differential equation called the \emph{master equation}. Seminal works of the application of such modeling for neuroscience date back to the early 90s with the work of Ohira and Cowan \cite{ohira-cowan:93}, and have been progressively developed by different authors and today constitute a model of choice~\cite{buice-cowan:07,buice-cowan:10,bressloff:09,elboustani-destexhe:09}. The present manuscript is based on the analysis of} two recently developed stochastic models of the customary Wilson and Cowan equations.The global resulting activity is then readily derived from the activity of all neurons in the network. The two models of interest here were instantiated specifically in order to be compared to standard Wilson and Cowan rate equations, and provide tractable stochastic extensions of the purely deterministic standard neural-field models \cite{buice-cowan:10,bressloff:09}. The two approaches differ {in the choice of the transition probability and on the variables considered, choices mainly based on the fact that} they address two different regimes of network activity: Buice, Cowan and Chow address the highly correlated Poisson-like activity and Bressloff, the asynchronous states. {The two approaches also differ in the way the equations are treated: Buice, Cowan and collaborators in \cite{buice-cowan:07,buice-cowan:10} derive moment equations by carrying out a loop expansion of a path integral representation of their master equation whose truncation is justified under the assumption that the network operates in a Poisson-like regime. Bressloff rigorously justifies his moment truncation by exhibiting an expansion in powers of a small parameter $1/N$ where $N$ corresponds to the size of the network.} These different points of view lead the authors to define different variables and they obtain closely related yet different equations on the variables they define, derive equations for moments from the master equation which they then simplify and truncate to finite order. 

The present paper is organized as follows: in the first section we describe the models we use. Both simplified models introduced involve coupling between mean firing rate and correlations between firing times, with finite-size correlations. In the second section, we are interested in the non-zero correlation solutions of the mean-field equations, and, in particular, focus on the qualitative similarities and differences between the solutions of these stochastic equations compared to the customary Wilson and Cowan behaviors. In the third section, we turn to study the finite-size effects, namely the qualitative differences between the solutions of mean-field limits and Wilson and Cowan system on one hand, and the behaviors of large but finite networks on the other hand. These finite-size solutions are compared back to the initial Markovian model and the differential equations they produced.

\section{Markovian firing models}
In this section, we introduce and describe the master equation formalism used by Buice, Cowan and collaborators \cite{buice-cowan:07,buice-cowan:10} (hereafter BCC)  and Bressloff \cite{bressloff:09} and extend these approaches to take into account different populations. Both models have been previously derived, so we quickly present their derivation and the resulting equations in this section, as our main purpose is to \review{derive from these equations our model of interest, the \emph{infinite-size model}, and to mathematically analyze it}.{ Details on the derivation of these equations and on their theoretical justification can be found in~\cite{buice-cowan:07,buice-cowan:10,bressloff:09}.}

Throughout manuscript, we will be interested in a network structured into $M$ homogeneous neural populations. Each population $i \in \{1,\ldots,M\}$ is composed of $N_i$ neurons, and the total number of neurons is denoted by $N$ (i.e. $N=\sum_{i=1}^{M} N_i$). We are interested in the behavior of the network when the number of neurons $N$ tends to infinity. In this limit, we assume that the proportion of neurons belonging to each population are non-trivial, i.e. $\lim_{N\to \infty} N_i/N = \lambda_i \in (0,1)$. Each neuron of population $i$ interacts with all the neurons of the network, with an efficiency denoted $W_{ij}$. These weights are assumed to scale as $1/N_j$, so that a given activity at the level of population $i$ will produce a bounded activity at the level of population $j$. We therefore define the effective interconnection weights $w_{ij}$ such that $W_{ij} = w_{ij}/N_j$. Each population is assumed to receive an input firing rate $I_i(t)$ considered deterministic and constant in the rest of the paper.

\subsection{Deterministic Wilson and Cowan equations}
The Wilson and Cowan (WC) model is based on the assumption that the synaptic input current to each population is a function of the firing-rate of the pre-synaptic population, and that the contributions of the different populations are linearly summed to produce the post-synaptic population firing rate. It assumes moreover that the population-averaged firing rates $\nu_i(t)$ are deterministic function of time that are governed by the equations:
\begin{equation}\label{eq:WilsonCowan} 
	\der{\nu_i(t)}{t} = -\alpha_i \nu_i(t) + f_i\left( \sum_{j=1}^M w_{ij} \nu_j + I_i(t) \right)
\end{equation} 
where $f_i$ is a function transforming an incoming current into an output firing rate and typically has a sigmoidal shape. $\alpha_i$ are the relaxation rates corresponding to the natural inactivation of each population when they receive no input. 

This purely deterministic approach, often used for spatially extended neural networks, has proved efficient to model a large number of cortical phenomena, but fails to reproduce the stochastic behaviors that can appear at the level of population activity.  We now describe the stochastic framework used by~\cite{buice-cowan:07,buice-cowan:10,bressloff:09} to extend this framework in order to take into account the stochastic nature of the firing. 

\subsection{Markovian Framework}

In the Markovian approach yielding BCC and Bressloff models, each neuron can be either quiescent or active (active meaning in the process of firing an action potential), and the state of the network is then described by the variable $n \in \mathbb{N}^M$ where $n_i(t)$ is the total number of active neurons in population $i$. This chain is modeled as a one-step discrete-time Markov process, i.e. the only possible transitions of this chain are assumed to be $n_i \rightarrow n_i\pm 1 $ for $i\in \{1,\ldots,\,M\}$. Each active neuron of population $i$ returns to a quiescent state with constant probabilistic rate $\alpha_i$ and each quiescent neuron become active with a state-dependent rate $ F_i(n)$ depending on the inputs the neuron receives. The probability of the network to be in a state $n$ at time $t$ therefore satisfies a differential equation called the \emph{master equation}:
\begin{multline}\label{eq:Master}
	\der{P(n,t)}{t}=\sum_{i=1}^M \Big[\alpha_i\,(n_i+1) P(n_{i+},t) - \alpha_i n_i P(n,t) \\ 
	+ F_i(n_{i-})P(n_{i-},t) - F_i(n)\,P(n,t) \Big]
\end{multline}
From this equation \review{the authors derive equations on } the moments of the Markov chain. \review{We propose in appendix \ref{append:RodigTuck} an alternative derivation of the moment equations based on a Langevin approximation of the rescaled Markov chain and on the use of Rodriguez and Tuckwell expansions~\cite{rodriguez-tuckwell:96}. }

In the present paper we are particularly interested in \review{a variable that would correspond in the Markov setting to} the proportion of active neurons in each population: $p_i(t)=n_i(t)/N_i$ for $i=1,\ldots,N$, which is \emph{a priori} a well-behaved bounded stochastic process that takes values in the interval $[0,1]$. \review{Having defined this variable, we will be in a position to rescale BCC and Bressloff variables, and obtain from their derivation the equation governing this variable. In doing so, as further illustrated below, we will not need to derive new moment equations from the master equation, but it will appear as a change of variable in the set of ordinary differential equations these authors previously obtained. In this view, the rescaled mean firing rate is defined as the instantaneous averaged proportion of active neurons in each population $\nu_i(t)=\langle n_i(t) \rangle /N_i$, and rescaled correlations as the average value of the correlations of the proportion of active neurons in each population $C_{ij}=\langle n_i(t)n_j(t)/N_iN_j \rangle$, the two first moments of the process $p(t)$.}

The equations for the moments involve averaged values of $\langle F_i(n)\rangle$ which, since the activation is not polynomial, involve all the moments of the variable $n$, and therefore need to be truncated in order to obtain a closed set of equations. The choice of the function $F_i$ is so far unspecified and will be defined in order to recover the WC equations in the mean-field uncorrelated limit. 

With the aim of specifically investigating (uncorrelated) asynchronous states, Bressloff truncates the moment series expansion up to second order, and considers the evolution of two coupled variables: the mean firing-rate $\nu_i$ and $n_{ij} = N_j C_{ij}$ a correlation variable \review{differing from our variable in that it is scaled by $N_j$.} 
After Taylor expansion and a Van-Kampen system-size expansion, a specific choice of the activation function $F_i$ is made so that one ends up with the following set of ordinary differential equations:
{
\begin{equation}\label{eq:OriginalBressloff}
	\begin{cases}
		\der{\nu_i}{t}&=-\alpha_i \, \nu_i + f_i(s_i) + \frac{1}{2 N}\, f_i''(s_i) \sum_{k,l}w_{ik}\,w_{il}\,n_{kl}\\
		\der{n_{ij}}{t}&=[\alpha_i \, \nu_i + f_i(s_i)]\delta_{ij} - (\alpha_i+\alpha_j)\,n_{ij} \\
		& \qquad+ \sum_{k} [ f_i'(s_i) w_{ik}\,n_{kj} + f_j'(s_j) w_{jk}\,n_{ki} ]
	\end{cases}
\end{equation}
where $\nu_i$ corresponds to the firing rate of population $i$: $\nu_i(t)=\langle n_i(t)\rangle /{N}$ and $n_{ij}$ are defined as the correlations $\langle n_i(t) n_j(t) \rangle/N$. In the original formula, the number of neurons of each population is assumed to be equal to ${N}$. This quantity is bounded by application of Kurtz theorem~\cite{kurtz:76}, and therefore the term $\frac{1}{2 N}\, f_i''(s_i) \sum_{k,l}w_{ik}\,w_{il}\,n_{kl}$ tends to zero as $N$ tends to infinity, implying that the equation of the mean becomes independent of the correlations $n_{ij}$. Since we are interested in the statistics of the proportions of active neurons in each population, we are in a position to derive from the original Bressloff equations \eqref{eq:OriginalBressloff} the system of equations governing our variables $\nu_i$ and $C_{ij}$ formally, and obtain in a multi-population case with distinct population sizes: }
 \begin{equation}\label{eq:BressPopsRescaled}
 	\begin{cases}
 		\der{\nu_i}{t}&=-\alpha_i \nu_i + f_i(s_i) + \frac{1}{2} \,f_i''(s_i) \sum_{k,l}w_{ik}w_{il} C_{kl}\\
 		\der{C_{ij}}{t}&=\frac{1}{N_i}[\alpha_i \nu_i + f_i(s_i)]\delta_{ij} - (\alpha_i+\alpha_j) C_{ij} \\
 		& \qquad+ \sum_{k} [ f_i'(s_i) w_{ik}\, C_{kj} + f_j'(s_j) w_{jk}\, C_{ki} ]
 	\end{cases}
 \end{equation}
{In both equations \eqref{eq:OriginalBressloff} and \eqref{eq:BressPopsRescaled}, we denoted} by $s_i(t)$ the total instantaneous current received by population $i$ at time $t$:
\begin{equation}\label{eq:Si}
	s_i(t)=\sum_{j=1}^M w_{ij} \nu_j(t) + I_i(t).
\end{equation}
{We note that the system \eqref{eq:BressPopsRescaled}, though derived from Bressloff's master equation and formalism, is equivalent to the original Bressloff model \eqref{eq:OriginalBressloff} as long as the total number of neurons $N$ is finite, since it was derived through an invertible change of variable which is invertible as long as $N$ is finite, but not invertible in the limit $N \to \infty$. }

Buice, Cowan and collaborators~\cite{buice-cowan:07,buice-cowan:10} (BCC case), interested in studying the Poisson-like firing modes of the network, transform the moment equations to derive equations of \emph{normal ordered cumulants} measuring the deviations of the moments of the variable $n$ from pure Poisson statistics. The first normal ordered cumulant is equal to $\langle n_i \rangle$, the second ordered cumulant to $c_{ij}=C_{ij} - \nu_i/N_i \delta_{ij}$. {The initial approach of Buice, Cowan and collaborators is not a finite-size expansion \emph{per se} in the general case, hence the parameter according to which the expansion is performed is no longer $1/N$ as in the case of Bressloff's Van Kampen expansion, but consists of a multiple time scale expansion where the small parameter is the decay time of normal order cumulants (which applies far from bifurcation points). However, in the fully connected case we are interested in in the present manuscript with weights scaling as $1/N$, the expansion provided Buice, Cowan and collaborators method can be reduced to a finite-size expansion, and can be compared to Bressloff's case.} In terms of these variables after some approximations and moment truncation to the second-ordered cumulant, they end up  with the following set of ordinary differential equations (this step involves an instantiation of the activation functions $F_i$ different from the Bressloff case):
\begin{equation}\label{eq:BCPops}
	\begin{cases}
		\der{\nu_i} {t} &= -\alpha_i \nu_i + f_i(s_i) + \frac 1 2 f_i''(s_i) \sum_{j,k} w_{ij} w_{ik} c_{jk}\\
		\der{c_{ij}}{t} &= -(\alpha_{i}+\alpha_{j}) c_{ij} + f_i'(s_{i})\sum_{k} w_{ik} c_{kj} + f_j'(s_{j})\sum_{k} w_{jk} c_{ki} + \\
		& \qquad + \frac{1}{N_{j}} f_i'(s_{i})\,w_{ij}\,\nu_{j} + \frac{1}{N_i} f_j'(s_j)\,w_{ji}\,\nu_{i}
	\end{cases}
\end{equation}

\begin{remark}
	{We note that the rescaling does not affect the form of the equations obtained in \cite{buice-cowan:10}. These equations are valid under our assumptions that the synaptic weights $W_{ij}$ scale as $1/N$ in a fully connected network. We emphasize the fact that both expansions can be rigorously derived far away from bifurcation points. In the manuscript, we will however be interested in the bifurcations of the dynamical systems given by \eqref{eq:BressPopsRescaled} and \eqref{eq:BCPops}. Indeed, these bifurcations produce changes in the number and in the stability of attractors (fixed point and limit cycle) which will be visible in non-trivial parameter intervals, and therefore will affect the behaviors of the system far away from bifurcations and will give us indications of the number and stability of attractors in those parameter regions.}

	{These models formally appear to very similar. However, it is important to note that the these two approaches are different: the master equation corresponds to different transition rates (different choices of the functions $F_i$ as stated), and the variables considered differ. The two equations \eqref{eq:BressPopsRescaled} and \eqref{eq:BCPops} constitute our starting point for a mathematical exploration of the solution, and in the rest of this paper, we mathematically analyze these equations. The reader nevertheless needs to keep in mind the differences between the two approaches for interpreting the results.}
\end{remark}
 
 \subsection{The infinite-size model} 
{The equations \eqref{eq:BressPopsRescaled} and \eqref{eq:BCPops} are \emph{a priori} different. First of all, they do not deal with the same quantities: equation \eqref{eq:BressPopsRescaled} couples the mean firing-rate and the correlations while equation \eqref{eq:BCPops} the mean firing-rate and the first order cumulant. However, in the limit $N\to\infty$, the first-order cumulants, $c_{ij}=C_{ij} - \nu_i/N_i \delta_{ij}$, are simply equal to the correlation $C_{ij}$. Moreover, in the rescaled }models we introduced, we observe that when the number of neurons tends to infinity, both Bressloff \eqref{eq:BressPopsRescaled} and BCC \eqref{eq:BCPops} models converge to the same equations. These equations will be referred to as the \emph{infinite size model} and are given by the equations:
 \begin{equation}\label{eq:InfiniteModel}
 	\begin{cases}
 		\der{\nu_i}{t}&=-\alpha_i \nu_i + f_i(s_i) + \frac{1}{2}f_i''(s_i) \sum_{k,l}w_{ik}w_{il} \Delta_{kl}\\
 		\der{\Delta_{ij}}{t}&= - (\alpha_i+\alpha_j)\Delta_{ij} \\
 		& \qquad+ \sum_{k} [ f_i'(s_i) w_{ik} \Delta_{kj} + f_j'(s_j) w_{jk} \Delta_{ki} ]
 	\end{cases}
 \end{equation}
 {We recall that although the interpretation of the variable $\Delta$ differs in the general case, it is the not the case of the infinite model}: in BCC model $\Delta_{ij}=c_{ij}$ are the second ordered cumulant, while in Bressloff model $\Delta_{ij}=C_{ij}$ is the correlation in the firing activity, and in the infinite-size limit, the normal ordered cumulant is equal to the correlation. However, when $N$ becomes finite, the reader needs to bear in mind that in the finite-size unfolding of the infinite-size equations in BCC case, $\Delta$ represents the deviation of spike statistics from a Poisson Process and in Bressloff model the covariance of the firing activity. When letting $N$ be finite, the two models unfold the behavior of the system in a different fashion. We also note that if the variable $\Delta(t)$ is equal to zero, then the mean-firing rates $\nu_i(t)$ satisfy the WC equations. In that view, Bressloff and BCC models are generalizations of WC system that take into account the correlations in the firing. 
 
 In this paper, we will first be interested in studying the mean-field limit solutions of the infinite size equations \eqref{eq:InfiniteModel}, and second in unfolding these mean-field behaviors when the number of neurons is large but finite. Our study will particularly focus on two main aspects: (i) how the stochastic nature of the firings affect the observed behaviors at the macroscopic level through the interplay of firing activity and the correlations, and (ii) how the finiteness of the number of neurons in the network disturbs these behaviors. 

\review{We also note that in view of Kurtz' theorem \cite{kurtz:76}, our variable in the infinite-size system vanishes. We formally consider these equations independently of this restriction. In other words, the infinite-size system is obviously not equivalent to the WC system, but BCC and Bressloff systems should be equivalent, in this limit, to WC system. The study of the infinite-size system will allow us to find what we call \emph{correlation-induced behaviors}, which are qualitative distinctions between the behaviors of the solutions of the infinite-size system and of Wilson and Cowan system. If these behaviors are unfolded non-trivially into solutions of the finite-size equations, these will evidence additional behaviors of the system presented by the finite-size Bressloff and BCC equations that do not correspond to solutions of Wilson and Cowan system. We will address these questions in in section \ref{sec:FiniteSize}. }
 
Besides our theoretical analysis on the equations, we will also go back to the initial Markovian model and compare (with Monte Carlo simulations) the stochastic behaviors it presents to the solutions of WC, BCC and Bressloff models.
 
 \section{Influence of the correlations in the \review{infinite-size system}}\label{section:Infinite}
 We have shown that in the limit where number of neurons is infinite, both Bressloff and BCC models, {in our particular rescaling,} converge to the infinite-size model given by equations \eqref{eq:InfiniteModel}  where the mean-firing rate is coupled to the correlations in the firing activity. When the correlations are equal to zero, the equation for the mean-firing rate is uncoupled from the one for the correlations $\Delta$ and is identical to the WC system. The question we address in this section is how the interaction between the mean activity and the correlations modifies the behavior of the global activity compared to WC system given by equations \eqref{eq:WilsonCowan}. We address the following two questions: (i) are the solutions of Wilson and Cowan equations also solutions of the full systems including correlatons, and if the answer is yes, are the stability properties of these solutions conserved? and (ii) Do the correlations yield other behaviors? 
 
 \subsection{Wilson and Cowan's Solution Solve the Infinite-Size System}
 In equation \eqref{eq:InfiniteModel}, it is easy to see that zero correlation: $\Delta(t)=0$   is always a fixed point of the correlation equations no matter what mean firing rates $\nu(t)$ are. For zero correlations, the equations for the mean firing rate $\nu$ are reduced to the classical WC equations. Therefore, WC solutions define solutions for the infinite-size system with zero correlations. Let us now study the stability of the solutions defined by these uncorrelated WC behaviors in the infinite size system.
 
 The infinite-size system involves a standard $M$-dimensional differential equation on the mean firing rates $\nu(t)$ coupled to a matrix differential equation. In order to use customary approaches for dynamical systems, we transform the correlation matrix into a $M^2$ dimensional vector and express the equations on $\Delta$ in this new format using Kronecker products from linear algebra. To this end, we start by defining the function $\Vect$ transforming a $M \times M$ matrix into a $M^2$-dimensional column vector, as defined in  \cite{neudecker:69}:
 \[\Vect: \begin{cases}
 \R^{M\times M} &\mapsto \R^{M^2}\\
 X & \mapsto [X_{11}, \ldots, X_{M1}, X_{12}(t), \ldots, X_{M2}(t), \ldots X_{1M}(t), \ldots, X_{MM}(t)]^T
 \end{cases}\]
 Let us now denote by $\otimes$ the Kronecker product defined for $A\in \R^{m\times n}$ and $B \in \R^{r\times s}$ as the $(m\,r) \times (n\,s)$ matrix:
 \[A\otimes B := 
 \left(
 \begin{array}{cccc}
 	a_{11} B & a_{12} B & \cdots & a_{1n}B\\
 	a_{21} B & a_{22} B & \cdots & a_{2n}B\\
 	\vdots & \vdots & \ddots & \vdots\\
 	a_{m1} B & a_{m2} B & \cdots & a_{mn}B\\
 \end{array}
 \right)\]
 Some definitions and identities in the field of Kronecker products are reviewed in appendix \ref{append:Kronecker}, and a more complete discussion can be found in \cite{brewer:78} and references therein. We recall here for the sake of completeness some well-known relationship that will be useful. Let $A,B,D,G,X \in \R^{M\times M}$, let $I_M$ be the $M\times M$ identity matrix and $A\cdot B$ or $A\,B$ denote the standard matrix product. We have:
 \begin{equation}\label{eq:Kronecker}
 	\begin{cases}
 		\Vect (AXB) = (B^T \otimes A ) \Vect(X)\\
 		A\oplus A =A\otimes I_M + I_M \otimes A\\
 		(A \otimes B)\cdot (D \otimes G) = (A\cdot D) \times (B\cdot G)
 	\end{cases}.
 \end{equation}
 The relationship $\oplus$ is called Kronecker sum. In this framework, we have the following identity:
 
 \begin{lemma}\label{lem:CKron}
 	Let $V_{\Delta}(t)=\Vect(\Delta(t))$ be the column vectorization of the matrix $\Delta(t)$, and $A(x)$ the matrix defined by $A(x) = -{\bm \alpha} + {\bm F}(x)$ where ${\bm \alpha}$ is the diagonal matrix with $(i,i)$ element ${\bm \alpha}_{ii} = \alpha_i$ and ${\bm F}$ the matrix of general element $({\bm F})_{ij} = (f_i'(\sum_{k=1}^M w_{ik}\,x_k) w_{ij})$.
 	 
 	The variable $V_{\Delta}(t)$ satisfies the differential equation in $\R^{M^2}$:
 	\begin{equation}\label{eq:VDelta}
 		\der{V_{\Delta}(t)}{t} = (A(\nu(t))\oplus A(\nu(t))) V_{\Delta}(t).
 	\end{equation}
 \end{lemma}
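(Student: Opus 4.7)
The plan is to rewrite the matrix-valued ODE for $\Delta$ in a compact matrix form, then apply the vectorization identity from \eqref{eq:Kronecker} term by term.

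First I would rewrite the correlation equation from \eqref{eq:InfiniteModel} directly in matrix notation. The term $-(\alpha_i+\alpha_j)\Delta_{ij}$ is the $(i,j)$ entry of $-(\boldsymbol{\alpha}\Delta + \Delta \boldsymbol{\alpha})$, since $\boldsymbol{\alpha}$ is diagonal. The first coupling sum, $\sum_k f_i'(s_i) w_{ik} \Delta_{kj} = \sum_k \mathbf{F}(\nu)_{ik}\Delta_{kj}$, is the $(i,j)$ entry of $\mathbf{F}(\nu)\Delta$. For the second coupling sum, $\sum_k f_j'(s_j)w_{jk}\Delta_{ki} = \sum_k \mathbf{F}(\nu)_{jk}\Delta_{ki}$, I would observe that this is the $(j,i)$ entry of $\mathbf{F}(\nu)\Delta$, hence the $(i,j)$ entry of $\Delta^T \mathbf{F}(\nu)^T$. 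Since $\Delta$ is the matrix of correlations $\langle n_i n_j/(N_iN_j)\rangle$, it is symmetric, and this equals $(\Delta\,\mathbf{F}(\nu)^T)_{ij}$. (One should note in passing that symmetry is preserved by the dynamics, as is visible by swapping $i$ and $j$ in the equation.) Altogether, one obtains
\begin{equation*}
\frac{d\Delta}{dt} = \bigl(-\boldsymbol{\alpha}+\mathbf{F}(\nu)\bigr)\Delta + \Delta\bigl(-\boldsymbol{\alpha}+\mathbf{F}(\nu)\bigr)^{T} = A(\nu)\Delta + \Delta\, A(\nu)^T.
\end{equation*}

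Next I would apply the vectorization identity $\Vect(AXB) = (B^T\otimes A)\Vect(X)$ from \eqref{eq:Kronecker} to the two matrix products. Writing $A\Delta = A\,\Delta\, I_M$ gives $\Vect(A\Delta) = (I_M \otimes A)V_\Delta$, and writing $\Delta A^T = I_M\,\Delta\, A^T$ gives $\Vect(\Delta A^T) = (A\otimes I_M) V_\Delta$ (using $(A^T)^T = A$). Summing the two contributions and invoking the definition of the Kronecker sum yields
\begin{equation*}
\frac{dV_\Delta}{dt} = \bigl(I_M\otimes A(\nu) + A(\nu)\otimes I_M\bigr)V_\Delta = \bigl(A(\nu)\oplus A(\nu)\bigr)V_\Delta,
\end{equation*}
which is the desired identity.

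This proof is essentially an algebraic manipulation, so there is no real obstacle — the only subtle point is handling the second coupling term, which naturally produces a transpose, and verifying that the symmetry of $\Delta$ (inherited from its probabilistic interpretation and preserved by the flow) allows one to reconcile the two forms so that the Kronecker sum structure emerges cleanly. The rest is a direct application of the vectorization rules stated in \eqref{eq:Kronecker}.
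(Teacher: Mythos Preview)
Your proof is correct and follows essentially the same approach as the paper: rewrite the $\Delta$ equation in the matrix form $\dot\Delta = A(\nu)\Delta + \Delta A(\nu)^T$ and then apply the vectorization identity to obtain the Kronecker sum. You are in fact slightly more careful than the paper on the one delicate point, namely that the second coupling term $\sum_k f_j'(s_j)w_{jk}\Delta_{ki}$ naturally yields $(\Delta^T A^T)_{ij}$ and requires the symmetry of $\Delta$ to become $(\Delta A^T)_{ij}$; the paper passes over this silently by writing $\Delta_{ik}$ in place of $\Delta_{ki}$.
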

 
 \begin{proof}
 	The differential equation governing the evolution of the coefficients of the correlation matrix $\Delta(t)$ of the infinite size system \eqref{eq:InfiniteModel} can be easily reordered into:
 	\[\der{\Delta_{ij}}{t} = \sum_{k=1}^M \left( A_{ik}(\nu(t)) \Delta_{kj}(t) + A_{jk}(\nu(t))\Delta_{ik} \right) \]
 	which, through straightforward linear algebra manipulations, can be written as:
 	\[\der{\Delta}{t} = A(\nu(t))\cdot \Delta(t) + \Delta(t) \cdot A(\nu(t))^T.\]
 	The linear operator $ X \mapsto A(\nu(t))\cdot X + X \cdot A(\nu(t))^T$ for $X \in \R^{M\times M}$ can be written in terms of Kronecker product of matrix on the vectorized version $V_{\Delta}(t)$ of the matrix $\Delta(t)$ using the relationship given in \eqref{eq:Kronecker} and we have: 
 	\begin{align*}
 		\Vect(A(\nu(t))\cdot \Delta(t) + \Delta(t) \cdot A(\nu(t))^T) &= \Vect\Big(A(\nu(t)) \cdot \Delta(t) \cdot I_M\Big) + \Vect\Big(I_M\cdot \Delta(t) \cdot A(\nu(t))^T\Big)\\
 		&= \Big(I_M^T \otimes A\big(\nu(t)\big) + A\big(\nu(t)\big) \otimes I_M\Big) \cdot V_{\Delta}(t) \\
 		&= \Big(A\big(\nu(t)\big)\,\oplus\, A\big(\nu(t)\big)\Big)\cdot V_{\Delta}(t).
 	\end{align*}
 \end{proof}
 
 Now that we have defined a convenient framework to study the infinite size equations, we are in position to study the stability of fixed points and limit cycles of the WC system as solutions with zero correlations of the infinite size system. We start by addressing the problem in the one population model ($M=1$), which allows straightforward calculations and will be helpful in our specific study of the one population infinite-size equation.
 
 \begin{proposition}\label{prop:onepop}
 	In the one population case, any fixed point of Wilson and Cowan equation is a zero-correlation fixed point of the infinite-size system, and with the same stability properties.
 \end{proposition}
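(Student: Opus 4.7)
The plan is to exploit the observation preceding the statement that $\Delta=0$ is invariant under \eqref{eq:InfiniteModel}, so that any fixed point $\nu^*$ of the Wilson--Cowan equation \eqref{eq:WilsonCowan} immediately lifts to a fixed point $(\nu^*,0)$ of the infinite-size system. It only remains to match linear stability, and for this I would linearize the one-population version of \eqref{eq:InfiniteModel} at $(\nu^*,0)$ and read off the eigenvalues directly.

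In the one-population case, writing $s=w\nu+I$ and $s^*=w\nu^*+I$, the system becomes
\begin{equation*}
\der{\nu}{t}=-\alpha\nu+f(s)+\frac{1}{2}f''(s)w^2\Delta,\qquad \der{\Delta}{t}=2\bigl(f'(s)w-\alpha\bigr)\Delta.
\end{equation*}
The key structural point is that the right-hand side of the $\Delta$-equation is homogeneous linear in $\Delta$, so its partial derivative with respect to $\nu$ vanishes at $\Delta=0$. The Jacobian at $(\nu^*,0)$ is therefore upper triangular, and its diagonal immediately yields the two eigenvalues
\begin{equation*}
\lambda_1=-\alpha+f'(s^*)w,\qquad \lambda_2=2\bigl(-\alpha+f'(s^*)w\bigr)=2\lambda_1,
\end{equation*}
where $\lambda_1$ is exactly the linearization of \eqref{eq:WilsonCowan} at $\nu^*$.

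Since $\lambda_2=2\lambda_1$, the two eigenvalues are real and share the same sign, so hyperbolicity, (linear) asymptotic stability, and instability transfer in both directions between $\nu^*$ and its zero-correlation lift $(\nu^*,0)$; no additional oscillatory mode is introduced along the correlation direction, and the transverse eigenvector decays or grows at exactly twice the WC rate. I do not anticipate a genuine obstacle here: the only way the argument could fail would be a Jacobian with non-triangular structure mixing the $\nu$ and $\Delta$ modes, and this is ruled out precisely because the correlation equation carries no inhomogeneous term at $\Delta=0$ --- the same structural feature that underlies lemma \ref{lem:CKron} in the multi-population setting.
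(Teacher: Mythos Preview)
Your proof is correct and follows essentially the same approach as the paper: both write the one-population infinite-size system, observe that the Jacobian at $(\nu^*,0)$ is upper triangular (the paper displays the full matrix explicitly, you justify triangularity via the homogeneity of the $\Delta$-equation), and read off the eigenvalues $\lambda_1=-\alpha+wf'(s^*)$ and $\lambda_2=2\lambda_1$ to conclude that stability transfers.
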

 
 \begin{proof}
 In the one population case, equations \eqref{eq:BCPops} constitute a planar dynamical system that reads:
 \begin{equation}\label{eq:BConePop}
 	\begin{cases}
 		\der{\nu}{t} &= -\alpha \nu + f(s) + \frac 1 2 f''(s) w^2 \Delta\\
 		\der{\Delta}{t} &= -2\alpha \Delta + 2\,f'(s) \,w\,\Delta
 	\end{cases}
 \end{equation}
 where $s=w\,\nu+I$. 
 
 As stated, the solution $\Delta=0$ is a fixed point of the correlation equation, and in that case, the activity satisfies the one-population WC equation:
 \begin{equation*}
 	\der{\nu}{t} = -\alpha \nu + f(w\,\nu+I)
 \end{equation*}
 Let $\nu^*$ be a fixed point of this system. The Jacobian matrix of the infinite-size system at the fixed point $(\nu^*,\Delta \equiv 0)$ reads:
 \[\left (
 \begin{array}{cc}
 	-\alpha + w\,f'(w\nu^*+I) & \frac 1 2 f''(w\nu^*+I) w^2\\
 	0 & 2\big(-\alpha + w\,f'(w\nu^*+I) \big)
 \end{array}
 \right)\]
 As we can obviously see, the eigenvalues of this equilibrium are $\lambda_1:=(-\alpha + w\,f'(w\nu^*+I))$ and $\lambda_2=2\,\lambda_1$, and therefore the stability of this fixed point is the same as the stability of the related fixed point in WC's system.
 \end{proof}
 
 We therefore conclude that any stable solution of the one dimensional WC equation provides a zero correlation ($\Delta=0$) stable fixed point of the infinite-size system, and we observe that no stable solution of WC system is destabilized by the presence of correlations, and no unstable solution of the Wilson and Cowan equation will be stabilized. Therefore, zero-correlation fixed points of the infinite size system exists if and only if Wilson and Cowan equation has a fixed point, and the related fixed point of the infinite-size system has the same stability as WC's fixed point. 
 
 We now turn to demonstrating the related properties in arbitrary dimensions $M$, for fixed points and cycles. 
 
 \begin{theorem}\label{thm:WilsonBuiceCowan}
 	Solutions of the WC system provide solutions of the infinite-size system with zero correlations $\Delta=0$. The stability of fixed points and limit cycles of the infinite size system depend on the stability of the solution for the WC system as follows:\\

{\it i)} A fixed point of the infinite-size equation with null correlations is stable if and only if the value of related mean-firing rate is a stable fixed point of WC system;\\

{\it ii)} A cycle of the infinite size system with zero correlations $(\nu(t), \Delta\equiv 0)$ is exponentially unstable if and only if $\nu(t)$ is an unstable cycle of WC system. Stable cycles $\nu(t)$ of WC's system define cycles of the infinite-size system with zero correlations with neutral linear stability. The infinite size system does not present any stable cycle with null correlations. 
\end{theorem}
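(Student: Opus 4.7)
The plan is to exploit the block triangular structure of the linearization of the infinite-size system around any zero-correlation trajectory, which allows us to read off the stability directly from the WC Jacobian via Kronecker sum / product identities.

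First, I would observe directly from \eqref{eq:InfiniteModel} that $\Delta(t)\equiv 0$ is an invariant set of the correlation equation, and on this set the $\nu$-equation reduces to \eqref{eq:WilsonCowan}. So any solution $\nu(t)$ of WC lifts to the solution $(\nu(t),0)$ of the infinite-size system, which settles the first sentence of the theorem. All that remains is the stability analysis.

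For (i), I would linearize the full system around a zero-correlation equilibrium $(\nu^{*},0)$. Writing the infinite-size system in the $(\nu,V_\Delta)$ coordinates of Lemma~\ref{lem:CKron}, the key observation is that the right-hand side of the $V_\Delta$-equation is linear in $V_\Delta$ (it has the form $(A(\nu)\oplus A(\nu))V_\Delta$), so differentiating with respect to $\nu$ and evaluating at $V_\Delta=0$ kills the off-diagonal block. The Jacobian is therefore block upper triangular with diagonal blocks $A(\nu^{*})$ (which coincides with the Jacobian of WC at $\nu^{*}$, since $A_{ij}(\nu^{*})=-\alpha_i\delta_{ij}+f_i'(s_i^{*})w_{ij}$) and $A(\nu^{*})\oplus A(\nu^{*})$. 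Using the classical spectral identity for the Kronecker sum, $\mathrm{Spec}(A\oplus A)=\{\lambda_i+\lambda_j\}_{i,j}$, the full spectrum is $\{\lambda_i\}\cup\{\lambda_i+\lambda_j\}$. All eigenvalues have negative real part iff all $\lambda_i$ do, which gives the ``if and only if'' statement in (i).

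For (ii), I would invoke Floquet theory along a periodic orbit $\nu(t)$ of WC with period $T$. The same block-triangular structure holds for the variational system, so the Floquet multipliers of $(\nu(t),0)$ are the union of the multipliers of the WC variational equation $\dot y=A(\nu(t))y$ and those of the Kronecker-sum equation \eqref{eq:VDelta} evaluated along the cycle. Here I use the fact recalled in Lemma~\ref{lem:CKron} that \eqref{eq:VDelta} is the vectorization of $\dot X=A(\nu(t))X+X A(\nu(t))^{T}$, whose fundamental matrix is $X(t)=\Phi(t)X_0\Phi(t)^{T}$ with $\Phi$ the fundamental matrix of $\dot y=A(\nu(t))y$. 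Vectorizing gives monodromy $\Phi(T)\otimes\Phi(T)$, whose eigenvalues are $\{\mu_i\mu_j\}$ in terms of the WC Floquet multipliers $\mu_i$. If some $|\mu_i|>1$ (WC cycle unstable) then $|\mu_i^{2}|>1$, so the lifted cycle is exponentially unstable; conversely, if all $|\mu_i|\le 1$ the products also satisfy $|\mu_i\mu_j|\le 1$, ruling out exponential instability. Finally, since every periodic orbit has the trivial WC multiplier $\mu_1=1$ (tangent to the cycle), the Kronecker-product spectrum always contains $\mu_1^{2}=1$, which is \emph{not} tangent to an invariance of the full system; this forbids exponential stability of any zero-correlation cycle and yields the neutral linear stability in the stable-WC case.

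The routine parts are the spectral formulas for $A\oplus A$ and $\Phi\otimes\Phi$, which are standard and already collected in \eqref{eq:Kronecker}. The only subtle point I would be careful to justify clearly is the neutral linear stability statement: the presence of an extra multiplier of modulus one, coming from $\mu_1^2=1$ in the correlation block, is the mechanism preventing exponential stability even when WC is exponentially stable, and it is the essential content of the final sentence of the theorem.
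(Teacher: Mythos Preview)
Your proposal is correct and follows essentially the same route as the paper: block upper-triangular linearization at $\Delta=0$, spectrum $\{\lambda_i\}\cup\{\lambda_i+\lambda_j\}$ for fixed points, and Floquet monodromy $\Phi(T)\otimes\Phi(T)$ with multipliers $\{\mu_i\mu_j\}$ for cycles, the extra unit multiplier $\mu_1^2=1$ in the correlation block being exactly the paper's mechanism for neutral stability. Your derivation of the correlation monodromy via the Sylvester form $X(t)=\Phi(t)X_0\Phi(t)^{T}$ is a slightly more direct variant of the paper's Appendix~\ref{append:Kronecker} computation, but the argument is otherwise the same.
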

 
 \begin{proof}
 In order to prove the theorem, it is convenient to introduce the vector fields $F_{\nu}: \R^M\mapsto R^M$, $G_{\nu}(\nu):\R^M \mapsto R^{M\times M^2}$ and $F_\Delta: \R^M \mapsto \R^{M^2\times M^2}$ that govern the dynamics of the infinite size system:
 \[\begin{cases}
 	\der{\nu}{t} &= F_{\nu}(\nu) + G_{\nu}(\nu) V_{\Delta}(t)\\
 	\der{V_{\Delta}}{t} &= F_{\Delta}(\nu) V_{\Delta}(t)
 \end{cases}\]
 We have $F_{\nu}(\nu) = -\mathbf{\alpha}+[f_i(s_i), i=1\ldots M]^T$, $F_{\Delta}(\nu) =A(\nu) \oplus A(\nu)$ where $A(\cdot)$ is defined in lemma \ref{lem:CKron}, and $G_{\nu}(\nu)$ is the $M^2\times M^2$ matrix such that:
 \[G_{\nu}(\nu)V_{\Delta}(t) = [\frac{1}{2}f_i''(s_i) \sum_{k,l}w_{ik}w_{il} \Delta_{kl}, i=1\ldots M]^T\]
 
 Let $(\nu(t),\Delta\equiv 0)$ be a solution of the infinite-size system. The Jacobian matrix of the system evaluated on this solution has the form:
 \begin{align*}
 	J(\nu(t),0) &= \left ( 
 		\begin{array}{cc}
 			d_\nu (F_\nu) + d_{\nu}(G_{\nu})V_{\Delta} \Big \vert_{\nu(t), V_{\Delta}\equiv 0} & G_{\nu}(\nu)\Big \vert_{\nu(t), V_{\Delta}\equiv 0}\\
 			d_\nu (F_\Delta) V_{\Delta} \Big \vert_{\nu(t), V_{\Delta}\equiv 0} & d_{V_{\Delta}}(F_\Delta)\Big \vert_{\nu(t), V_{\Delta}\equiv 0}		
 		\end{array}
 		\right)\\
 		\\
 		&= \left ( 
 			\begin{array}{cc}
 				d_\nu (F_\nu) (\nu(t)) & G_{\nu}(\nu(t))\\
 				{\bm 0} & d_{V_{\Delta}}(F_\Delta)(\nu(t))
 			\end{array}
 			\right)
 \end{align*}
  where $d_X f$ for a multidimensional vector $X$ and a multidimensional function $f$ denotes the differential of $f$ with respect to $X$. Since $F_\nu$  is exactly the vector field associated with WC system, $d_\nu F_\nu$ is an $M\times M$ block corresponding to Jacobian matrix of WC system at the point $\nu$, which is equal to the matrix $A(\nu)$ introduced in lemma \ref{lem:CKron}.
 
 Now that the Jacobian matrix is identified, we prove the assertions of the theorem:\\

{\it i)} Let us consider a fixed point $(\nu, \Delta\equiv 0)$ of the infinite size system. Necessarily, $\nu$ is a fixed point of Wilson and Cowan equation. The stability of this fixed point depends on the spectrum of the Jacobian matrix $J$ of the system at this point. Since the Jacobian matrix of the infinite-size system is block diagonal, its spectrum is therefore composed of the eigenvalues each diagonal block $d_\nu(F_\nu) = A(\nu)$ and $d_{V_{\Delta}}(F_\Delta)(\nu(t))$. The first block, $A(\nu)$, has exactly the eigenvalues of Wilson and Cowan system at its fixed point $\nu$. These eigenvalues are denoted $\{\lambda_i, i=1\ldots M\}$. The second diagonal block of the Jacobian matrix $d_{V_{\Delta}}(F_\Delta)$ is equal to the Kronecker sum $A(\nu)\oplus A(\nu)$ by application of lemma \ref{lem:CKron}. The eigenvalues of a Kronecker sum are known to be all possible pairwise sums of all the eigenvalues of $A(\nu)$, viz. $(\lambda_i+\lambda_j;\;(i,j)\in\{1,\ldots,M\}^2)$ (see an elementary proof of this fact in proposition \ref{prop:KronEigen} of appendix \ref{append:Kronecker}), hence the spectrum of the Jacobian matrix of the infinite size system is composed of the eigenvalues:
 		\[\{\lambda_i, i=1\ldots M\} \cup \{\lambda_i+\lambda_j , (i,j)\in\{1,\ldots,M\}^2\}.\]
 		 These eigenvalues depend in a simple fashion on the eigenvalues of the WC Jacobian matrix at the fixed point $\nu$, and it is easy to show that the fixed point $(\nu, 0)$ in the the infinite system is:
 		\begin{itemize}
 			\item exponentially stable if and only if the $\nu$ is an exponentially stable fixed point of WC system, i.e. if and only if all the eigenvalues $\lambda_i$ have a strictly negative real part.
 			\item exponentially unstable if and only if the $\nu$ is an exponentially unstable fixed point of WC system, i.e. if and only if there exists an eigenvalue $\lambda_i$ with strictly positive real part.
 			\item neutrally stable if and only if $\nu$ is neutrally stable for WC system, i.e. if and only if all eigenvalues have non-positive real part and at least one eigenvalue have a null real part.
 		\end{itemize} 
In summary, the stability of the zero-correlation fixed point $(\nu, \Delta\equiv 0)$ is exactly the same as the stability of $\nu$ as a fixed point of WC system. 

{\it ii)} Let us now address the case of cycles will null correlations. To this end, let us consider that $(\nu(t), \Delta(t)\equiv 0)$ is a periodic orbit of the infinite-size system (i.e. $\nu(t)$ is a periodic orbit of a WC system). Let us denote by $T>0$ the period of this cycle. We show that the null correlations $\Delta \equiv 0$ is an exponentially unstable solution if the cycle $\nu(t)$ is exponentially unstable as a solution of WC system, and neutrally stable if the cycle $\nu(t)$ is exponentially stable or neutrally stable as a solution of Wilson and Cowan system, which proves the theorem.
 		
 		Given that the cycle $\nu(t)$ is known, the correlations $\Delta(t)$ satisfy a linear equation:
 		\[\der{V_{\Delta}(t)}{t}=\Big(A(\nu(t)) \oplus A(\nu(t))\Big)\, V_{\Delta}(t)\]
 		and the time-dependent matrix $\Big(A(\nu(t)) \oplus A(\nu(t))\Big)$ is $T$-periodic. A basic result of Floquet theory implies that the stability of the solution $V_{\Delta} \equiv 0$ depends on the eigenvalues of the resolvant of the system at time $T$, namely the matrix $\Psi(T)$ in $\R^{M^2\times M^2}$ where $\Psi(\cdot)$ is defined by
 		\[
 		\begin{cases}
 			\der{\Psi(t)}{t} &= \Big(A(\nu(t))\oplus A(\nu(t))\Big)\Psi(t)\\
 			\Phi(0) &= I_{M^2}
 		\end{cases}
 		\]
 		More precisely, the null fixed point is exponentially stable if and only if all eigenvalues of $\Psi(T)$ ( the \emph{multipliers}) are of modulus strictly smaller than $1$, neutrally stable if all have modulus smaller or than equal to $1$ with at least one multiplier with modulus equal to $1$, and exponentially unstable if there exists a multiplier with modulus larger than $1$. We therefore need to characterize the eigenvalues of $\Psi(T)$ in order to conclude on the stability of the solution in the infinite-size system.   
 		To this end, let us introduce $\Phi(t)$ the resolvent of WC system, i.e. the unique solution of the fundamental equation:
 		\[
 		\begin{cases}
 			\der{\Phi(t)}{t} &= A(\nu(t))\Phi(t)\\
 			\Phi(0) &= I_M
 		\end{cases}
 		\]
 		We prove in theorem \ref{theo:PsiKron} of appendix \ref{append:Kronecker} that $\Psi(t)=\Phi(t)\otimes\Phi(t)$. Let us denote by $\{\mu_i,\; i=1,\ldots,M\}$ the eigenvalues of $\Phi(T)$. Because $\Psi(T)$ is the Kronecker square of $\Phi(T)$, its eigenvalues are are all pairwise products of the eigenvalues of $\Phi(T)$, i.e. $\{\mu_i\,\mu_j, \; i,j=1\ldots M\}$, as shown in proposition \ref{prop:KronEigen} of appendix \ref{append:Kronecker}.
 		
If the cycle $\nu(t)$ is exponentially unstable as a solution of WC system, then Floquet theory implies that there necessarily exists a multiplier of $\Phi(T)$, $\mu_i$, such that $\vert \mu_i \vert >1$. Therefore the resolvant $\Psi(T)$ has a multiplier equal to $\mu_i^2$ which has modulus equal to $\vert \mu_i \vert^2 >1$ and therefore this cycle is exponentially unstable as a solution of the infinite-size system. 
 		
 		If $\nu(t)$ is exponentially stable or neutrally stable as a solution of WC system, then a basic result of Floquet theory states that the resolvant if the linearized system evaluated on the cycle at time $T$, $\Phi(T)$, has at least a multiplier equal to $1$ and all other multipliers with multipliers of modulus smaller than $1$. We now provide an elementary proof of the existence of the $1$ multiplier: let $\xi(t)=F_{\nu}(\nu(t))$. Then since $\nu(t)$ is $T$-periodic, so is $\xi(t)$, and furthermore we have:
 		\begin{align*}
 			\der{\xi(t)}{t}=\der{F_{\nu}(\nu(t))}{t} = (d_{\nu}F_{\nu})(\nu(t))\,\der{\nu(t)}{t} = (d_{\nu}F_{\nu})(\nu(t))\,F_{\nu}(\nu(t)) = A(\nu(t))\,\xi(t)
 		\end{align*}
 		Moreover, since $\nu(t)$ is not a fixed point, $F(\nu(0))\neq 0$. We have shown $\xi(t)$ is solution of the linearized equation and hence using its $T$-periodicity, we have $\xi(T) = \Phi(T)\xi(0) = \xi(0)$, which proves that $\xi(0)$ is eigenvector of $\Phi(T)$ associated with the eigenvalue $1$. Therefore, the resolvent $\Psi(T)$ necessarily has an eigenvalue equal to one (associated with the eigenvector $\Vect(\xi(0)\xi(0)^T)$), which implies that the null fixed point has a neutral linear stability as a solution of the infinite-size system. 
 We directly conclude from this result that the infinite size system does not features any exponentially stable cycle, which ends the proof. 
 \end{proof}
 
We conclude that the solutions of WC system always provide solutions of the infinite-size system, and that all fixed points with null correlations in the infinite-size system are fixed points of WC system, with the same stability properties. However, cycles of WC system lose exponential stability in the infinite-size system. Note that this does not necessarily implies that these cycles become unstable, and a nonlinear stability analysis is necessary. But it implies that the transient phase of convergence towards the cycle or of repulsion from the cycle will be not be exponential.
 
 % We now relax the assumption that correlations are equal to zero, and search for solutions of the infinite system, which therefore do not correspond to any solution of the WC system. Such solutions will be referred to as \emph{correlations-induced behaviors}. 
 
 \subsection{Correlation-induced behaviors}
 In the previous section, we just proved that all the fixed points of the WC system are solutions of uncorrelated activity in the infinite size system. We now investigate the existence of new solutions induced by the presence of non-null correlations in the infinite system, that are therefore not solutions of WC system. Such solutions will be referred to as \emph{correlation-induced behaviors}. The general resolution of this problem is quite hard. For this reason, we provide a rigorous analysis of the one population case, and treat two-population case through numerical analysis and simulations.
 
 \subsubsection{One population case}
 We have seen in the one population case that any fixed point of WC system was solution of the infinite size system with zero correlations. Moreover, there is no possible cycle in the Wilson and Cowan equations in one dimension. We investigate now the existence of cycles or additional fixed points in the infinite size system with one population. We recall that acceptable solutions in the one population case necessarily have non-negative firing rate $\nu$ and $\Delta$. Indeed, in the case of the infinite-size model arising in the limit of Bressloff's rescaled model, the correlation $\Delta$ needs to be positive as a limit of positive quantities (and as the covariance of the Markov process), and in BCC case $\Delta$ is equal {to the limit of the first order cumulant which differs from} the correlation up to the coefficient $-\nu/N$ that vanishes in the infinite-size limit.
 
 \begin{theorem}\label{thm:OnePopNoSolution}
 	In the one population infinite size system, there does not exist any acceptable stable fixed point with strictly positive correlation $\Delta$.\\
 \end{theorem}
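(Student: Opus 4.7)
My plan is to analyze directly the planar system \eqref{eq:BConePop} and show that any equilibrium $(\nu^*,\Delta^*)$ with $\Delta^*>0$ has a non-positive Jacobian determinant, which rules out linear stability.

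First I would characterize the admissible equilibria. Setting $d\Delta/dt=0$ in \eqref{eq:BConePop} with $\Delta^*>0$ forces
\[
w\,f'(s^*)=\alpha, \qquad s^*=w\nu^*+I,
\]
i.e.\ the slope of $f$ at $s^*$ is fixed by $\alpha/w$. Setting $d\nu/dt=0$ then yields the compatibility relation $\Delta^*=2(\alpha\nu^*-f(s^*))/(f''(s^*)w^2)$ (well-defined whenever $f''(s^*)\neq 0$), together with the sign condition $\mathrm{sgn}(\alpha\nu^*-f(s^*))=\mathrm{sgn}(f''(s^*))$ required for $\Delta^*>0$. This parametrises candidate equilibria.

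Next I would linearise. Differentiating the right-hand side of \eqref{eq:BConePop} and evaluating at $(\nu^*,\Delta^*)$ with $wf'(s^*)=\alpha$, the off-diagonal entry $\partial_\Delta(d\nu/dt)$ equals $\tfrac12 f''(s^*)w^2$ and $\partial_\nu(d\Delta/dt)$ equals $2f''(s^*)w^2\Delta^*$, while the $(2,2)$ entry vanishes because $-2\alpha+2wf'(s^*)=0$. Hence
\[
J(\nu^*,\Delta^*)=
\begin{pmatrix}
\tfrac{1}{2}f'''(s^*)w^3\Delta^* & \tfrac{1}{2}f''(s^*)w^2 \\[2pt]
2f''(s^*)w^2\Delta^* & 0
\end{pmatrix},
\]
whose determinant is $-f''(s^*)^2\,w^4\,\Delta^*\le 0$. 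Strict negativity holds whenever $f''(s^*)\neq 0$, in which case the equilibrium is a saddle, hence unstable; this is the core of the argument.

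The one obstacle is the degenerate case $f''(s^*)=0$. Then both the fixed-point condition and the compatibility relation degenerate: the $\nu$-equation at equilibrium reduces to $\alpha\nu^*=f(s^*)$, which is precisely a WC fixed-point condition, and combined with $wf'(s^*)=\alpha$ it corresponds to a codimension-two condition (a WC bifurcation point at which $f''$ also vanishes). I would handle this by noting that in this case $\Delta^*$ is unconstrained, producing a one-parameter family of equilibria along which one eigenvalue is necessarily zero, so $(\nu^*,\Delta^*)$ is non-hyperbolic and cannot be exponentially stable. For generic activation functions $f$ (in particular the sigmoids used throughout the paper), this non-generic coincidence can be excluded, completing the conclusion that no admissible, linearly stable fixed point with $\Delta^*>0$ exists.
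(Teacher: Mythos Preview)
Your argument is essentially identical to the paper's: you impose $wf'(s^*)=\alpha$ from the $\Delta$-equation, compute the same Jacobian, observe that its determinant $-f''(s^*)^2 w^4\Delta^*$ is negative for $\Delta^*>0$ (hence a saddle), and treat the degenerate case $f''(s^*)=0$ by noting the resulting zero eigenvalue precludes exponential stability. The only minor difference is your closing remark about excluding the degenerate case for generic sigmoids, which is unnecessary since you have already shown such equilibria are non-hyperbolic and therefore not (exponentially) stable.
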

 
 \begin{proof}
 	Let us assume that $\Delta \neq 0$. The fixed point equation of the infinite-size system:
 \begin{equation}\label{eq:FP1Pop}
 	\begin{cases}
 		-\alpha+f'(w\,\nu+I) w = 0\\
 		-\alpha \nu + f(w\nu + I) + \frac 1 2 f''(w\nu+I)w^2\,\Delta = 0
 	\end{cases}
 \end{equation}
 	The first equation of the system \eqref{eq:FP1Pop} is independent of $\Delta$ and fixes possible values of $\nu$. Assuming that this equation has a solution $\nu^*$ and denoting $s^*=w\,\nu^*+I$, the Jacobian matrix of the system at $\nu=\nu^*$ reads, using the property that $\nu^*$ solves the first equation of \eqref{eq:FP1Pop}:
 	\begin{equation}\label{eq:Jac1Pop}
 		\left (
 		\begin{array}{cc}
 			\frac 1 2 f'''(s^*) w^3 \Delta & \frac 1 2 f''(s^*) w^2\\
 			2 f''(s^*)\,w^2\,\Delta & 0
 		\end{array}
 		\right)
 	\end{equation}
 Such values of $\nu^*$ necessarily exist for some parameters, and since the function $f$ is assumed to be sigmoidal, it can have multiple solutions. Moreover, because of the particular shape of the sigmoidal function $f$, the second derivative of $f$ can vanish at a point corresponding to the inflection point of the sigmoid. 
 	
 In a very particular case (for precisely tuned values of the parameters), one can hence have $f''(w\,\nu^*+I)=0$. In that case, fixed points only exist if $-\alpha + f'(w\nu^* + I)w=0$, if this condition is satisfied, any value of the correlation $\Delta$ provides a fixed point of the system. This fixed point is never exponentially stable, since in that case the Jacobian matrix has a zero eigenvalue (obvious when using the expression of the Jacobian matrix \eqref{eq:Jac1Pop} and using the fact that $f''(s^*)=0$). 
 	
 In the general case where $f''(s^*)\neq 0$, the system has a fixed point with non-null correlations 
 	\[\left ( \nu^*,\Delta^*:=2 \frac{\alpha \nu^* -f(s^*)}{w^2\,f''(s^*)}\right).\]
 	We therefore need to check whether if this fixed point is stable and acceptable (i.e. $\min(\nu^*,\Delta^*) \geq 0$). The Jacobian matrix at this point has the expression given by \eqref{eq:Jac1Pop}. Its determinant is equal to $-\big(f''(s^*)\big)^2\,w^4\,\Delta^*$ and has an opposite sign to $\Delta^*$. Therefore, for acceptable solutions with  $\Delta^*>0$, the determinant of the Jacobian matrix is strictly negative. We conclude that any fixed point of the infinite-size system with $\Delta>0$ are saddle fixed points. 
 \end{proof}

{We conclude from this theorem that the system does not features any acceptable stable fixed point with non-zero correlations.
We can now easily conclude that there are no acceptable cycles. Consider, first, a limit cycle such that $\Delta$ is strictly positive. Such a limit cycle must surround one or more fixed points whose Poincare index must sum to +1 \cite{arnold}. However, every fixed point in the postive orthant is a saddle point and saddle points always have indices of -1.  Thus, no limit cycle can have a strictly positive $\Delta.$ Any limit cycle must therefore be tangent to the $\Delta=0$ line and the point of tangency cannot contain a fixed point, so, again, the summed index of points in the limit cycle cannot be +1.}

In conclusion, there are no acceptable limit cycles and all acceptable fixed points are saddles. Thus, the only stable behavior are the uncorrelated stationary states of the scalar WC equation. This situation will be quite different in the multi-population case, as we now develop.
 
 \subsubsection{Correlation-induced behaviors in multi-populations networks}
We now turn to study multi-population networks with the aim of identifying solutions of the infinite-size system that qualitatively differ from WC equation. The situation in the multidimensional case will be quite different, since we have proven in theorem \ref{thm:WilsonBuiceCowan} that cycles of WC system lost exponential stability in the infinite-size system, allowing the appearance of distinct transient and/or asymptotic behaviors. 

To identify such correlation-induced behavior we numerically study two particular two population networks. The first model (Model I) is a two-population network including an excitatory and an inhibitory population known to produce oscillations, and the second example (Model II) builds on a famous model of binocular rivalry presenting bistability of fixed points. 

 \paragraph{Perturbation of a WC cycle}\label{sec:NegFeedback}
We proved in theorem \ref{thm:WilsonBuiceCowan} that any cycle of WC model lost exponential stability in the infinite-size system. This loss of exponential stability does not necessarily mean that the cycle loses stability, and this property depends on the higher order terms of the nonlinear equation, but even if the cycle keeps stability, the transient convergence phase towards the cycle will be dramatically slowed. 
 
In this section, we choose to study a two-populations network featuring an excitatory population interconnected with an inhibitory one. Because of the symmetry $\Delta_{ij}=\Delta_{ij}$, the system is of dimension $5$ (the two mean firing-rates and three correlation variables). To fix ideas, we denote by $1$ (resp. $2$) the excitatory (resp. inhibitory) populations. We choose the same activation function for both populations equal to  $f(x)=1/(1+\exp(-x))$ and the same inactivation rate $\alpha=1$, and are interconnected through the following connectivity matrix:
 \[
 \left ( \begin{array}{cc}
 	w_{11}=15 & w_{12}=-12\\
 	w_{21}=16 & w_{22}=-5
 \end{array}
 \right).
 \]
 Each population receives different input currents $I_1=-0.5$ and $I_2=-5$. These parameters define a model called Model I in the sequel. For these functions and parameters, the WC system features a cycle. When taking the input to the excitatory population $I_1$ small, the system features a single stable fixed point, that loses stability as the input $I_1$ increases through a supercritical Hopf bifurcation generating a family of stable limit cycles, that disappear through a homoclinic bifurcation (see Figure~\ref{fig:CyclesMeanField2pops}(a)), branching onto a high-state stable fixed point. Let us now analyze how this picture is modified by the presence of correlations. 
 % \begin{figure}[!h][!ht]
 % 	\centering
 % 		\includegraphics[width=.4\textwidth]{Figures/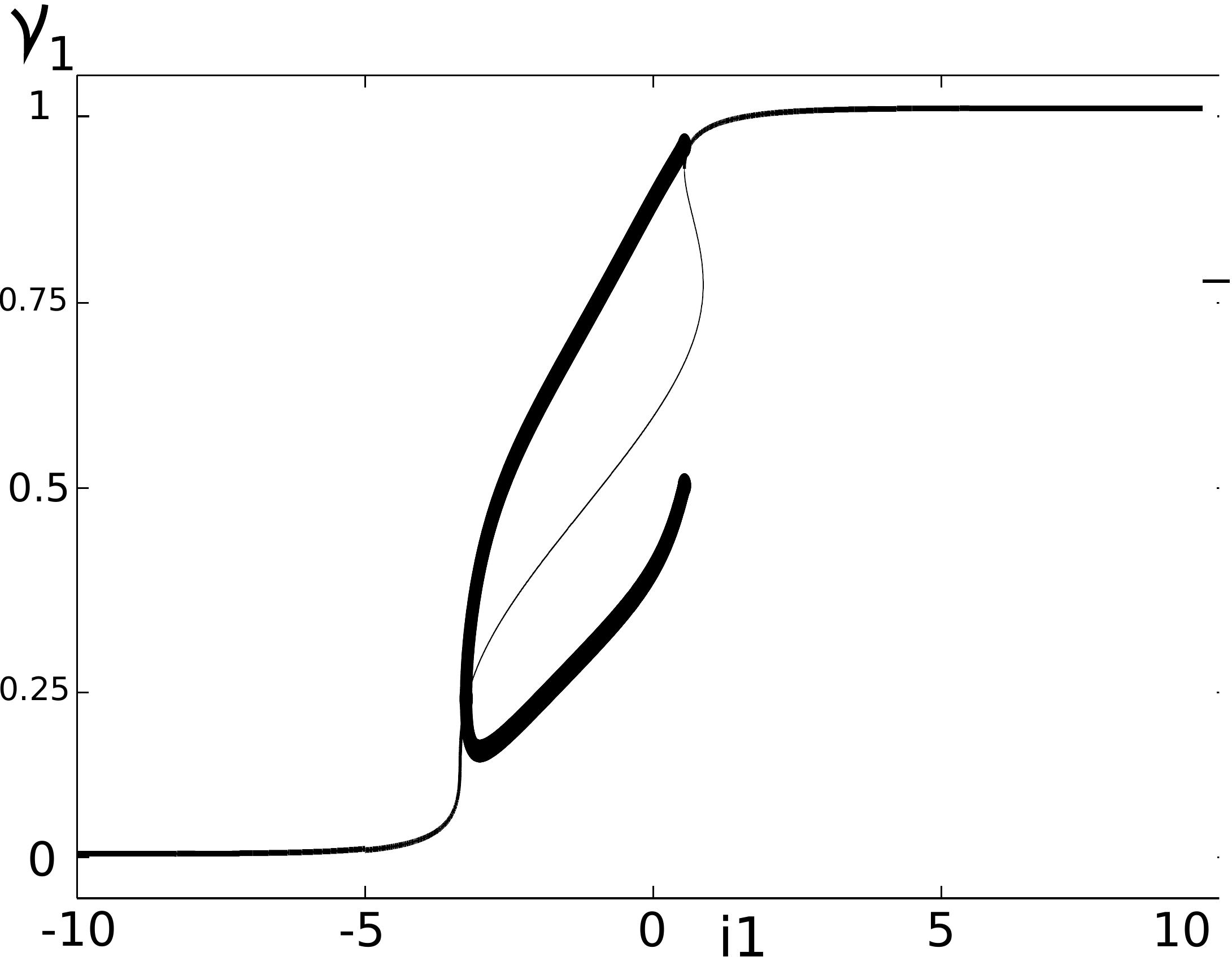}\quad
 % 		\includegraphics[width=.4\textwidth]{Figures/}
 % 	\caption{WC system: (left) Bifurcation diagram as a function of the excitatory population input parameter $I_1$. The thick lines represent stable fixed points, thin lines unstable fixed points, and black circles the extremal values of a stable cycle. The system features two saddle node bifurcations, a supercritical Hopf bifurcation and a saddle-node homoclinic bifurcation (SNIC). (right) Stable cycle in WC system for $I_1=-0.5$ represented in the plane ($\nu_1$, $\nu_2$). Diagrams generated by XPPAut \cite{ermentrout:02}.}
 % 	\label{fig:WCBifsCycle}
 % \end{figure} 
 
 First of all, from theorem \ref{thm:WilsonBuiceCowan}, it is clear that the parameter point where WC system undergoes the supercritical Hopf bifurcation will be a very degenerate bifurcation point for the infinite-size system. Indeed, the Hopf bifurcation is characterized by the presence of a pair of purely imaginay complex conjugate eigenvalues. Then lemma \ref{lem:CKron} and the fact that the Kronecker sum of two matrixes has all pairwise sums of eigenvalues of the matrixes in the sum (see appendix \ref{append:Kronecker}) directly implies that the Jacobian matrix of the full system \eqref{eq:InfiniteModel} has the eigenvalues $\{\pm \lambda, \pm 2 \lambda, 0\}$ and the eigenvalue $0$ is of multiplicity one. Therefore at this point, the Jacobian matrix of the infinite size system has its $5$ eigenvalues having a null real part, and thus at this point the system is very degenerate and in particular the bifurcation is not a generic Hopf bifurcation. It is quite difficult numerically and analytically to study the solutions emerging from this bifurcation point, and many solutions can appear at this point, including cycles and fixed points. Similarly, at the two saddle-node bifurcations of WC system, since the Jacobian matrix of the infinite-size system has all the pairwise sums of the eigenvalues, it will have two null eigenvalues and therefore a degenerate bifurcation. The behavior of the system around these degenerate points will appear more clearly in the finite-size unfolding of these degenerate bifurcation points, and we will observe in particular that the Hopf bifurcation point corresponds to the merging of two generic Hopf bifurcations and the saddle node bifurcations to the merging of two generic saddle-node bifurcations, see section~\ref{sec:NegFeedback} and figure Fig.~\ref{fig:Codim2BC2Pops}. In this section we do not address here the problem of classifying all behaviors of the system around the bifurcation (since as already mentioned, the finite-size unfolding will help answering these questions), and restrict the analysis to the question of whether the WC cycle keeps a nonlinear stability in the new system. To answer this question, we numerically perturb WC's cycle by adding a small initial condition in the correlations. Interestingly, we observe that a new cycle appears, that is attractive in a certain range of parameters (see Figure~\ref{fig:CyclesMeanField2pops}). The WC cycle does not completely loses stability and still appears as the only behavior of the system in a certain range of input values (e.g. $I_1=-0.5$ as plotted in Figure~\ref{fig:CyclesMeanField2pops}(b)). But for smaller values of the parameters, it loses its nonlinear stability and the additional cycle appears (e.g. $I_1=-2$ and Figure~\ref{fig:CyclesMeanField2pops}(c) and (d)). This new cycle is totally different in its shape, has a period close to half the period of the cycle corresponding to WC system, and has non-zero correlations that vary periodically at the same frequency. A continuation of this branch of cycles shows that it is stable in a significant range of parameters (Figure~\ref{fig:CyclesMeanField2pops}(e)). It loses stability when $I_2$ decreases through a period-doubling bifurcation, and as $I_2$ increases through a Neimark-Sacker bifurcation. This branch of limit cycles emerges from the very degenerate point corresponding to the Hopf bifurcation of WC system, as we expected. It disappears at a point corresponding to a subcritical Hopf bifurcation with correlations (on a branch of unstable fixed points that is not plotted in the diagram but that will be further investigated in section \ref{sec:TwoPopsFinite}). 
 
 \begin{figure}[!h]
   \centering
		 \subfigure[Bifurcation Diagram, Wilson and Cowan]{\includegraphics[width=.3\textwidth]{WCCyles.pdf}}\quad
     \subfigure[Wilson and Cowan cycle]{\includegraphics[width=.3\textwidth]{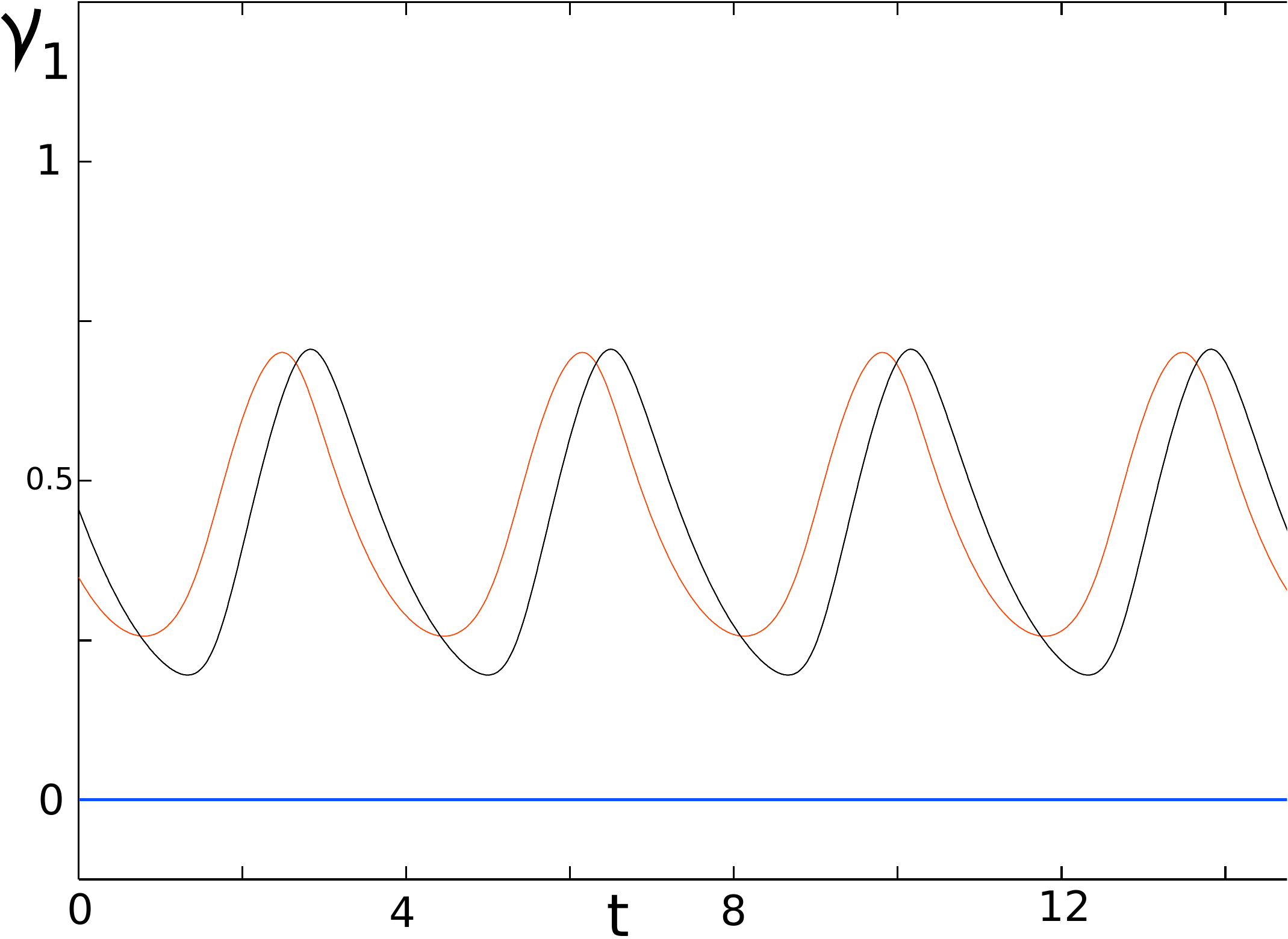}}\quad
     \subfigure[New cycle of the infinite system]{\includegraphics[width=.3\textwidth]{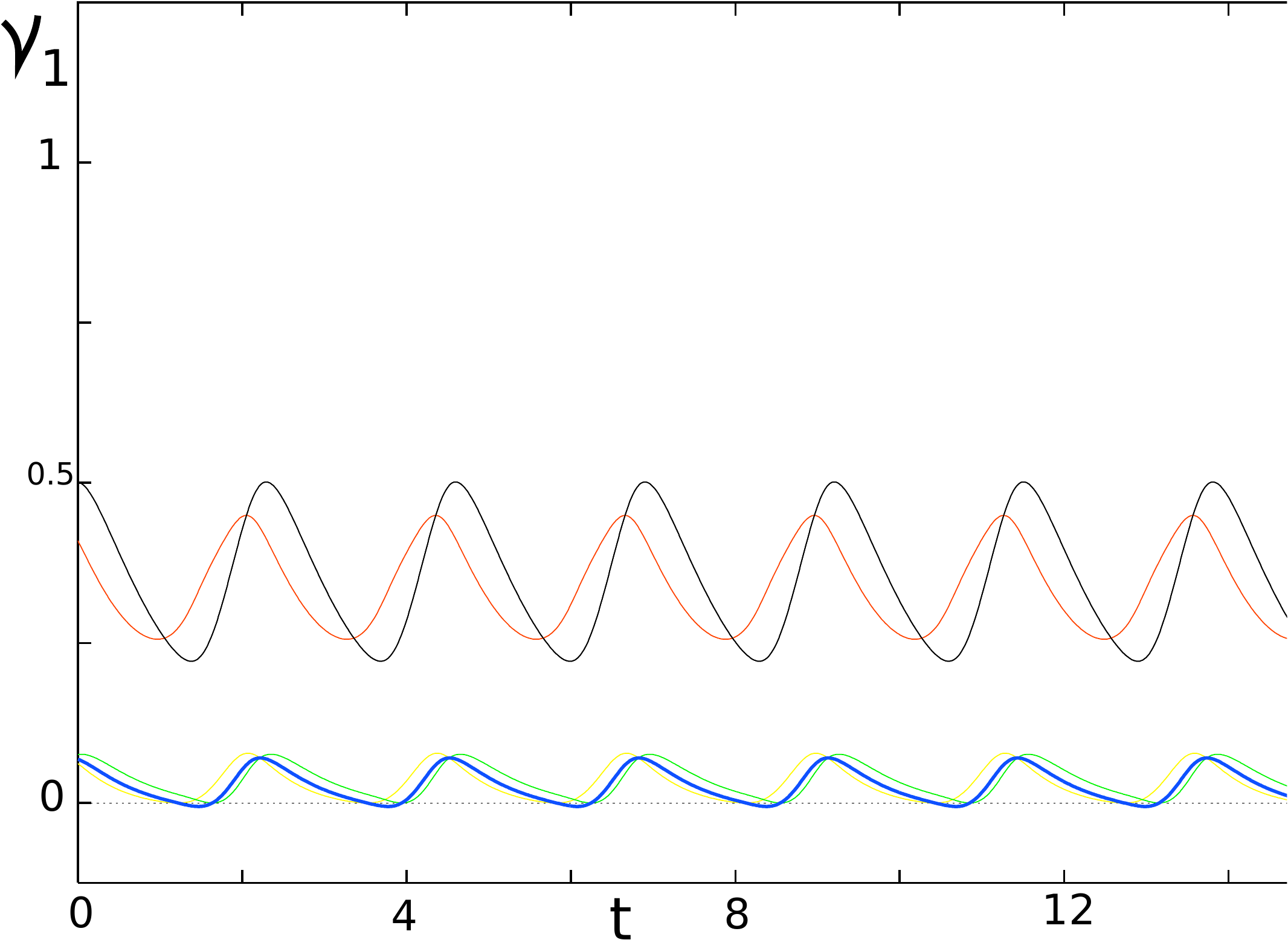}}\\
 		\subfigure[Cycles and trajectories in the plane $(\nu_1, \nu_2)$]{\includegraphics[width=.47\textwidth]{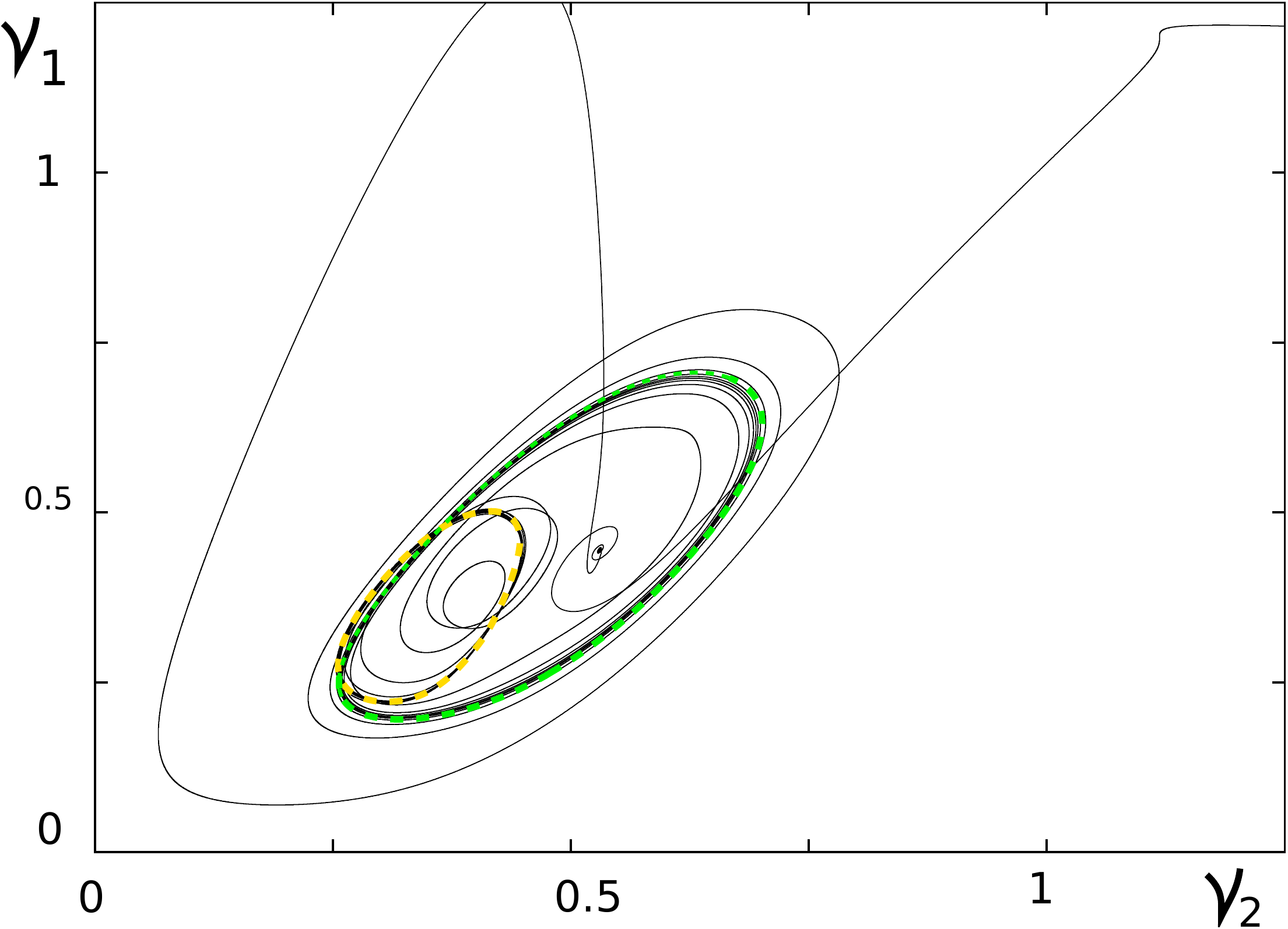}}\quad
 		\subfigure[Partial bifurcation diagram of the infinite-size system]{\includegraphics[width=.47\textwidth]{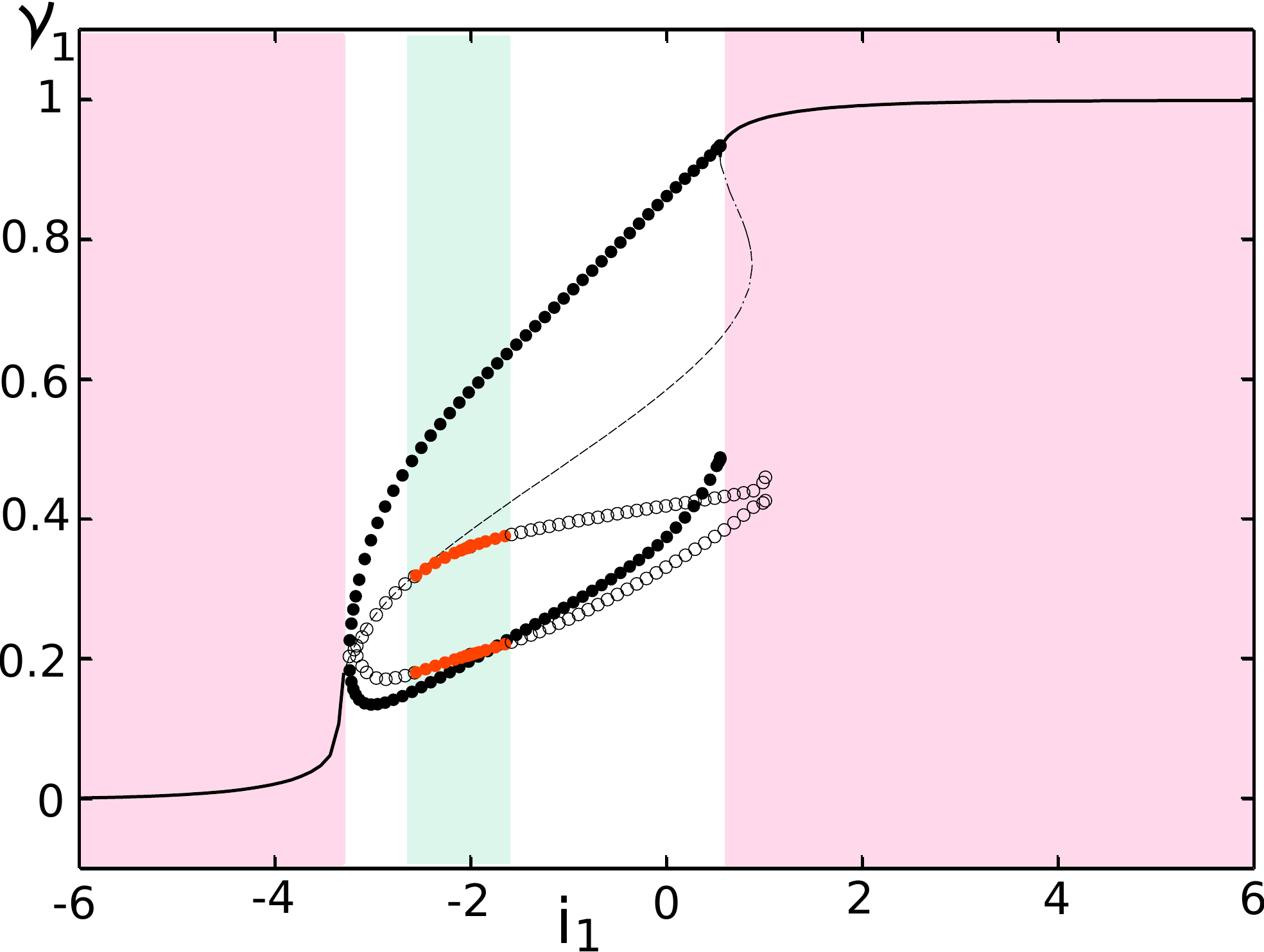}}\quad
   \caption{Model I: Comparison between WC system and the infinite-size system: (a) and (d) are the bifurcation diagram with respect to the parameter $I_1$ of respectively WC and the infinite-size system. The thick lines represent stable fixed point, thin lines unstable fixed points, plain circles the extremal values of a stable cycle and empty circle unstable cycles. (b) represents WC cycle as a solution of the infinite-size system, which shows null correlations. red: $\nu_1$, white: $\nu_2$, yellow: $\Delta_{11}$, green: $\Delta_{22}$, blue: $\Delta_{12}=\Delta_{21}$. For non-zero correlations of the initial condition, the solution converges towards a new cycle with correlactions (c) and (d) which is stable in a determined region of parameters. }
   \label{fig:CyclesMeanField2pops}
 \end{figure}
 
 We conclude with a further observation on transient behavior. We have seen that WC's cycle was neutrally stable, but in some regions of parameters, it has nonlinear stability. In the region of parameters where the WC cycle is nonlinearly stable (white region in figure \ref{fig:CyclesMeanField2pops}(d)), the transient phase of convergence towards this cycle takes a long time. In the green region where it is nonlinearly unstable, the neutral stability of the cycle produce some very strange transient behaviors where the activity of the cortical column is trapped around the neutrally stable WC system for long times, before leaving the neighborhood of this solution and converging towards the only stable solution which is the new cycle.

 \paragraph{Correlation-induced oscillations}\label{sec:second2Pop}
Correlations can have even more dramatic effects than modifying a periodic orbit as shown in Model I, and in this section we describe a case where a periodic orbit arises for parameter values where Wilson and Cowan system only fixed-points. To this end, we  choose a two-population network known to have  bistability. This model consists of two-populations with inhibitory interactions and no self-interaction. The connectivity matrix in that case reads:
 \[\left (
 \begin{array}{cc}
 	w_{11}=0 & w_{12}=-12\\
 	w_{21}=-12 & w_{22}=0
 \end{array}\right).\]
 The inactivation constants $\alpha_i$ are assumed constant equal to $1$, and the inputs to the two populations are distinct and denoted $I_1$ and $I_2$ (hence the system is not fully symmetrical). The firing rate function are also identical and we make the usual choice $f_i(x)=1/(1+\exp(-x))$ for $i\in \{1,2\} $. These parameters define a model called Model II. 
 
 We break the symmetries between the two populations by considering that each population can receive different input, we freeze the input $I_2$ and let $I_1$ vary. In that case, the standard WC system presents a bistable behavior between two stable fixed points as we illustrate in Figure~\ref{fig:Binoc2Pops}. The infinite size system driven by equations \eqref{eq:InfiniteModel} with these parameters has much richer dynamics, as illustrated in Figure~\ref{fig:Binoc2Pops}. We first set  $I_2=0$ and study the bifurcation diagram as $I_1$ varies. We observe that the infinite-size system features a stable limit cycle, which is a qualitatively nontrivial  effect of the correlations in the mean-field limit. All fixed-points behaviors in WC system are preserved, as predicted by theorem \ref{thm:WilsonBuiceCowan} (black curves in Figure~\ref{fig:Binoc2Pops}(b)), but new fixed points with non-zero correlations appear (blue lines in the diagram). Along one of the newly generated branch of fixed point, a supercritical Hopf bifurcation appears, generating a family of stable limit cycles (red circles in Fig.~\ref{fig:Binoc2Pops}(b)) that further undergoes few consecutive period doubling bifurcations. We note that the new branch of fixed points intersects the branch of fixed points of Wilson and Cowan system precisely at the saddle-node bifurcations of Wilson and Cowan system. This fact is compatible with a partial result of the proof of theorem \ref{thm:WilsonBuiceCowan}: at the saddle-node bifurcation, instead of a single eigenvalue equalling zero, two eigenvalues are simultaneously equal to zero, and the saddle-node bifurcations of WC system appear as transcritical bifurcations in the infinite size system. Note that in that case, the WC behavior is nowhere the unique stable behavior of the system, and for any value of the input parameter $I_1$, the infinite system will present attractive behaviors that are different from the WC behaviors. In particular, it has stable fixed points with non-zero correlations and non-zero correlation oscillations. However, we note that though these behaviors exist in the dynamics, the newly discovered fixed points do not constitute acceptable solutions since they are characterized by a negative firing-rate $\nu_2$ and a non-positive definite correlation matrix. However, the cycles constitute acceptable solutions that might have counterparts in the Markov system. 
 \begin{figure}[!h]
 	\centering
 		\subfigure[Wilson and Cowan system]{\includegraphics[width=.45\textwidth]{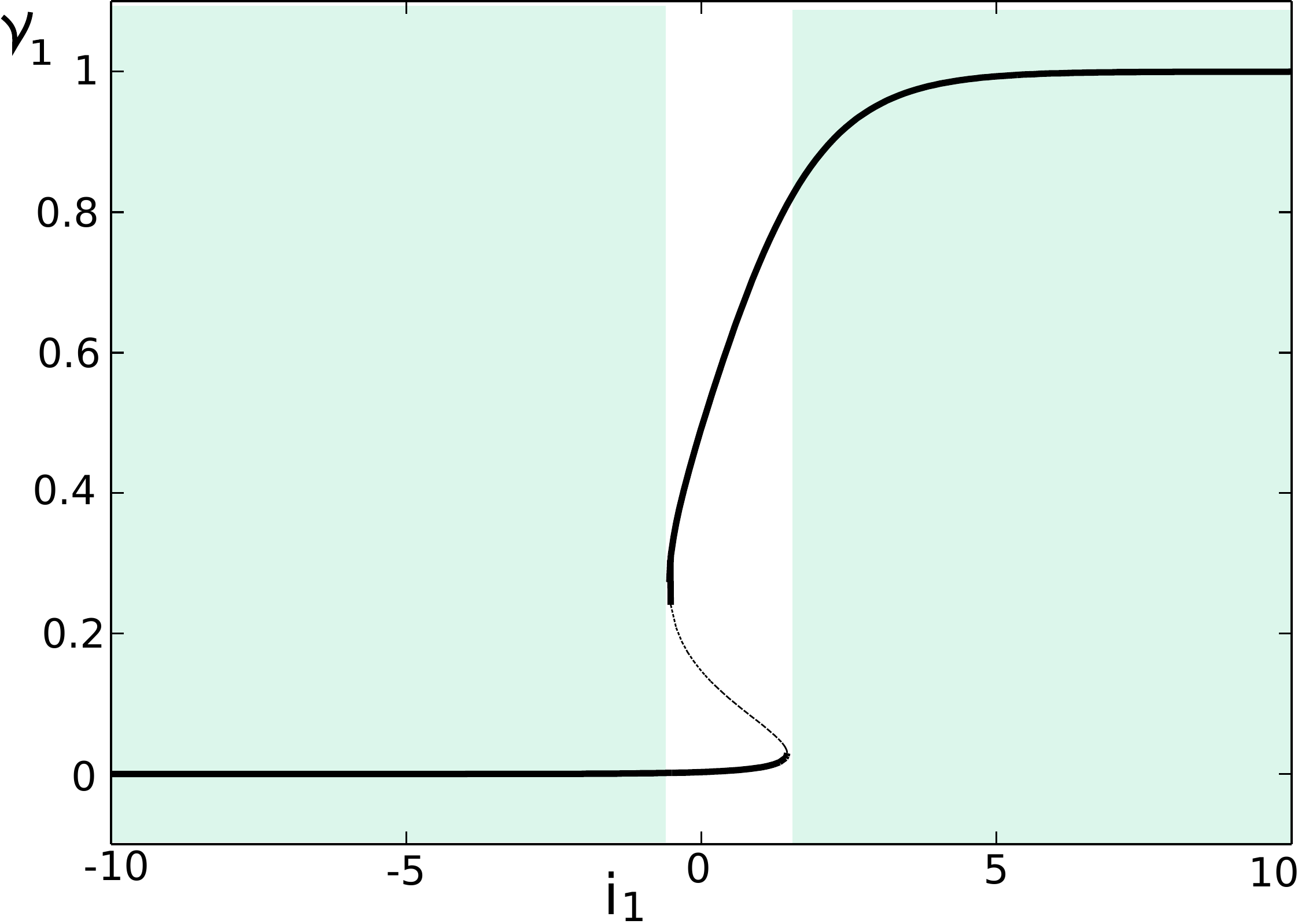}}\quad
 		\subfigure[BCC system]{\includegraphics[width=.45\textwidth]{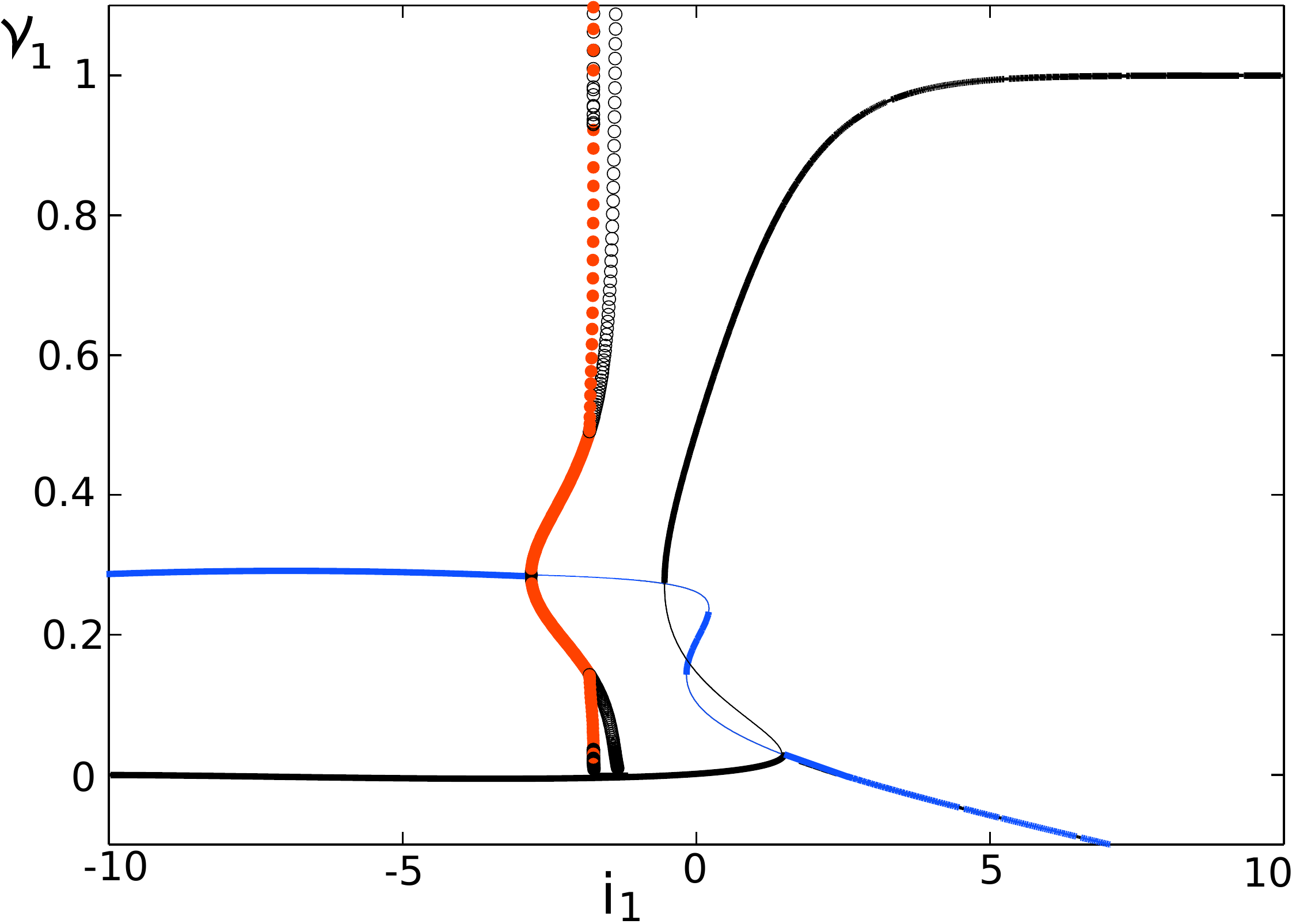}}\\
 		\subfigure[Wilson and Cowan system]{\includegraphics[width=.45\textwidth]{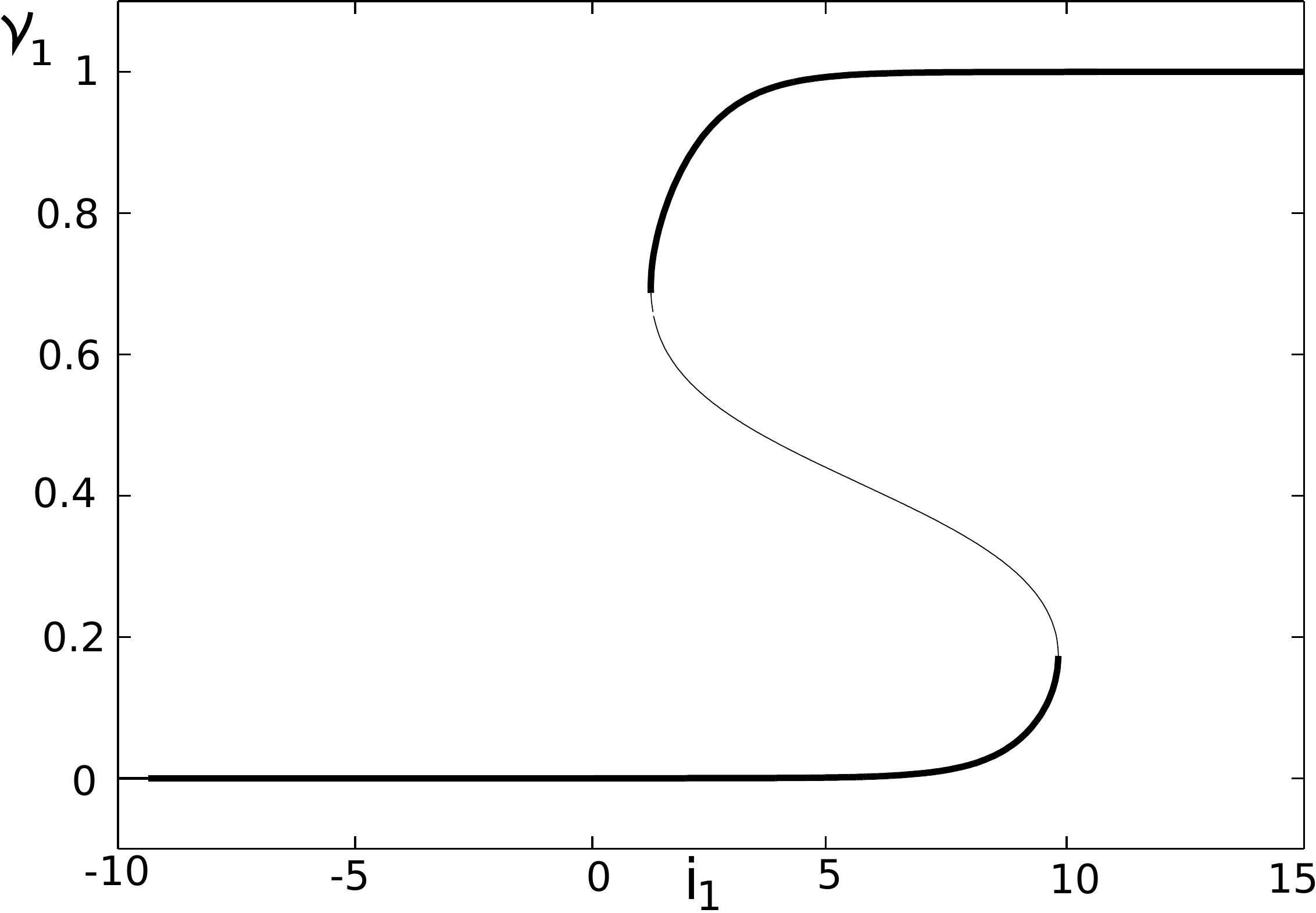}}\quad
 		\subfigure[BCC system]{\includegraphics[width=.45\textwidth]{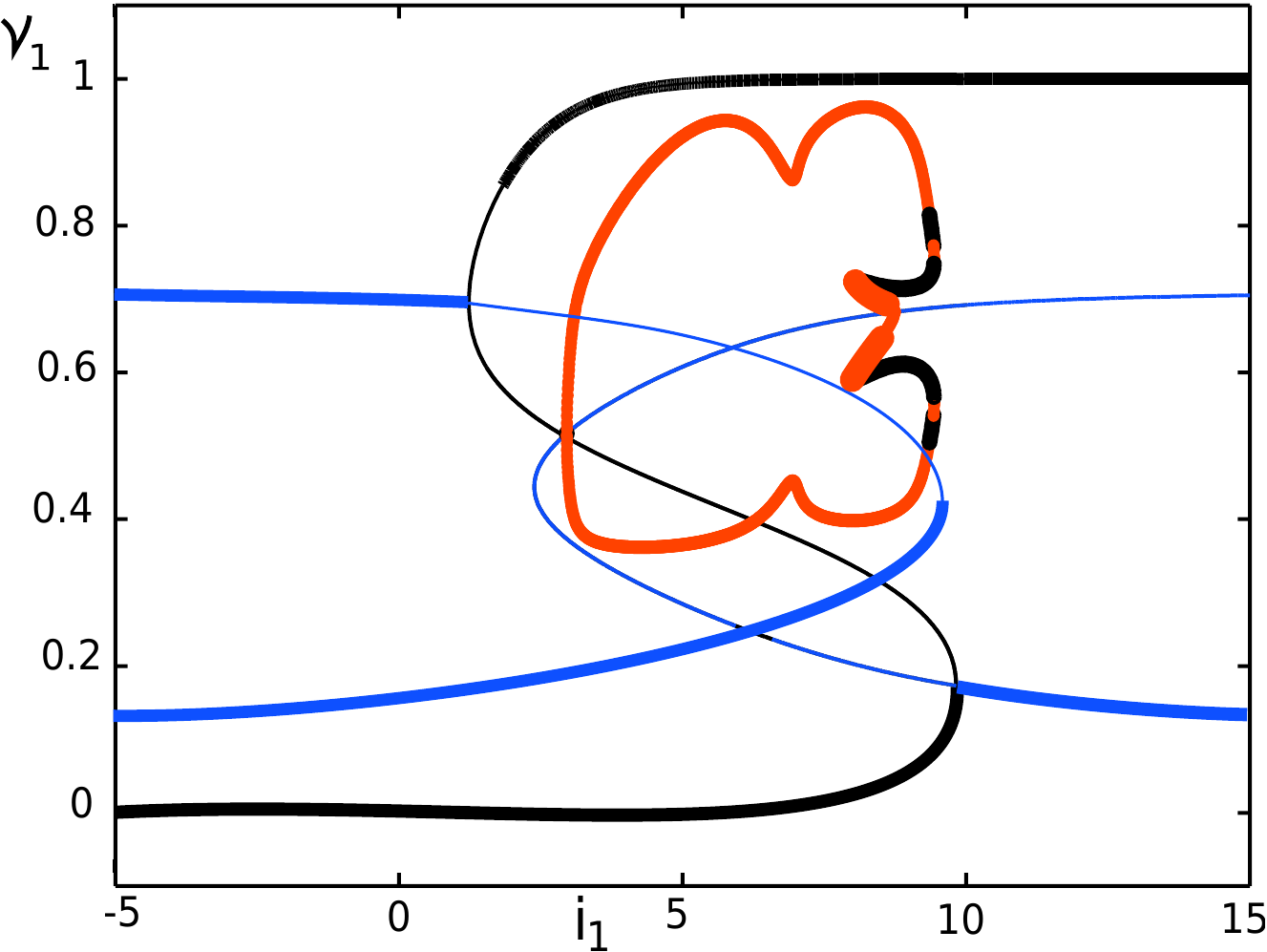}}
 	\caption{Model II, Wilson and Cowan (left column) and infinite-size (right column) systems, with $I_2=0$ (top row) and $I_2=5$ (bottom row).}
 	\label{fig:Binoc2Pops}
 \end{figure}

 We further explore this model by choosing a large value of the input parameter to the second population, $I_2=5$. For this level of input, the bifurcation diagram of WC system presents exactly the same features as in the case $I_2=0$, namely two saddle-node bifurcations and a region of bistability of fixed points. The infinite-size system, similarly to the case $I_2=0$, also has additional branches of fixed points with non-zero correlations that are connected to WC bifurcation diagram at the saddle-node bifurcation points. These saddle-nodes are degenerate and non-generic in the infinite size system from lemma \ref{lem:CKron} and become transcritical bifurcations. These additional non-zero correlation fixed points can be stable or unstable, depending on the parameter, and add additional possible behaviors. From one of these branches, the system undergoes a supercritical Hopf bifurcation and a family of stable limit cycles appears. This branch of limit cycles undergoes two period-doubling bifurcations, torus bifurcations and two fold of limit cycles, yielding additional oscillatory behaviors to the purely fixed-point structure of WC system. 
 
 \medskip
 
 From all these observations on both models, we conclude that the infinite-size system, besides featuring the same solutions as Wilson and Cowan system as a subset of solutions with zero correlations, also has additional behaviors that qualitatively differ from the Wilson and Cowan system. The mean-field limit including correlations is therefore a more complex system than WC system, in which correlations of the firing activity interact in a non-trivial way with the mean firing rate yielding complex behaviors. These qualitative correlation-induced behaviors can therefore be a good indicator of which model better accounts for the behavior of cortical columns and cortical areas \review{if these behaviors hold in the finite-size systems which are equivalent to the moment equations of the initial Markov chain.} 
 
We now turn to investigating finite-size effects arising from the finiteness of the number of neurons in a network.

 \section{Finite-size effects}\label{sec:FiniteSize}
The bifurcation diagram displayed by the infinite-size system studied in the previous section is composed of some very degenerate points, and displays behaviors the WC system does not display.
The question that naturally arises at this point is to understand whether the infinite-size model or WC model really capture the qualitative behavior of large networks in the Markovian framework \review{through the} BCC and Bressloff finite-size equations.

In this section we first study how the infinite-size bifurcation diagram is unfolded in finite-size networks, before simulating the Markov process and comparing its global behaviors to the behaviors of the finite-size systems. We observed that the finite-size BCC and Bressloff equations appeared as a perturbation of the infinite-size system, in which the size of the network appears as a parameter of the equations. In all of this study, instead of considering discrete population sizes, we will consider the inverse of the total population size $n=1/N$ as a continuous parameter. Non-trivial distinctions between finite-size networks and infinite size equation will essentially appear at the bifurcation points. Indeed, since both the rescaled Bressloff and BCC models vector fields are continuous with respect to the parameter $n$, when the infinite-size system ($n=0$) exhibits an hyperbolic fixed point, this fixed point will have a counterpart with the same stability for finite, sufficiently large networks, by application of the implicit functions theorem. This is also the case of hyperbolic fixed points of WC system, since these are hyperbolic fixed points of the infinite-size system with zero correlations. In order to study similarities and qualitative differences between finite-size networks and the infinite size limit, we will therefore focus on the bifurcations of both systems and numerically study the codimension two bifurcations in the one and two populations cases, with respect to the size of the network and to another parameter of interest (for instance an input parameter). 
 
 \subsection{Analysis of single-population finite-size networks}
 We start by investigating a single-population case. In the mean-field limit, we have shown that the infinite size system essentially exhibits the same behaviors as the WC model in terms of fixed points and limit cycles. But is this still the case where the number of neurons is finite?
 
 At the bifurcations of the WC system, singular phenomena can occur. Consider for instance a saddle-node bifurcation point of WC system. At this point, the infinite size system will present a double zero eigenvalue, and the unfolding of this bifurcation can lead to different cases. As we show in this section, it can either unfold into a generic saddle-node bifurcation as we show in the first section, or it can generate an oscillatory behavior through a limit cycle, creating in the finite-size system oscillations which  do not exist in the WC or in the infinite size one population equations.
 
 \subsubsection{Regular unfolding of the infinite-size system}\label{sec:OnePop}
We start by considering BCC model with a strictly positive voltage to rate function $f$, and focus on acceptable solutions that have positive firing rates and correlations. In that case, we numerically show that there is no qualitative difference between the stable solutions of the finite-size BCC network on one hand, and the infinite size system and WC system on the other hand (we showed that both models present the same qualitative behaviors in the one population case). We address this problem numerically, with an inactivation constant $\alpha=1$ (without loss of generality, since modifying this value only amounts rescaling time), $w=10$, and letting both the size of the network $n$ and the input parameter $I$ free. The activity to rate function is chosen to be $f(x)=1/(1+\exp(-x))$. 
 
With these parameters, WC system presents two saddle-node bifurcations when the input parameter $I$ varies, as shown in Figure~\ref{sfig:WC1pop}. The system therefore presents an interval of values of the parameter $I$ where the system presents two stable fixed points and an unstable fixed point, and outside this interval, the system has a unique fixed point which is stable. Similarly to what we showed in the previous section, the infinite-size network presents exactly the same features, and no cycle. 
 
 We now study BCC finite-size equations. We set the size of the network to be $N=50$. BCC finite size equations present the same qualitative behaviors, and the same type of bifurcations: the system undergoes two saddle-node bifurcations as the input parameter value $I$ varies, and a bounded interval of input values $I$ where the system presents a bistable behavior. The only qualitative distinction between finite-size networks and infinite size networks is the existence of a two unstable fixed points in the bistable zone and the fact that the system presents two distinct branches of fixed points, but these differences only affect unstable fixed points and do not produce generic qualitative differences in the behaviors. 
 
 The way this bifurcation diagram is transformed into the bifurcation diagram of the infinite system is very simple. The two distinct branches of fixed point get increasingly close, and in the limit $n=0$, the two branches are superimposed. For finite-size networks, we observe that the value of the correlations of the stable fixed points was non-zero (see Fig.~\ref{fig:1popcodim2BC}). These values were nevertheless quite small, and decrease towards zero as the size of the network as the number of neurons increases. When continuing the two identified saddle-node bifurcations as the network size increases, we observe that the bifurcation curve persists until the limit $n=0$ with no singularity. At the limit however, the non-zero eigenvalue of the Jacobian matrix at the saddle-node bifurcation point tangentially reaches the value $0$ yielding the singularity predicted in proposition \ref{prop:onepop}. 
 
 \begin{figure}[!h]
 	\begin{center}
 		\subfigure[Wilson and Cowan bifurcations]{\includegraphics[width=.4\textwidth]{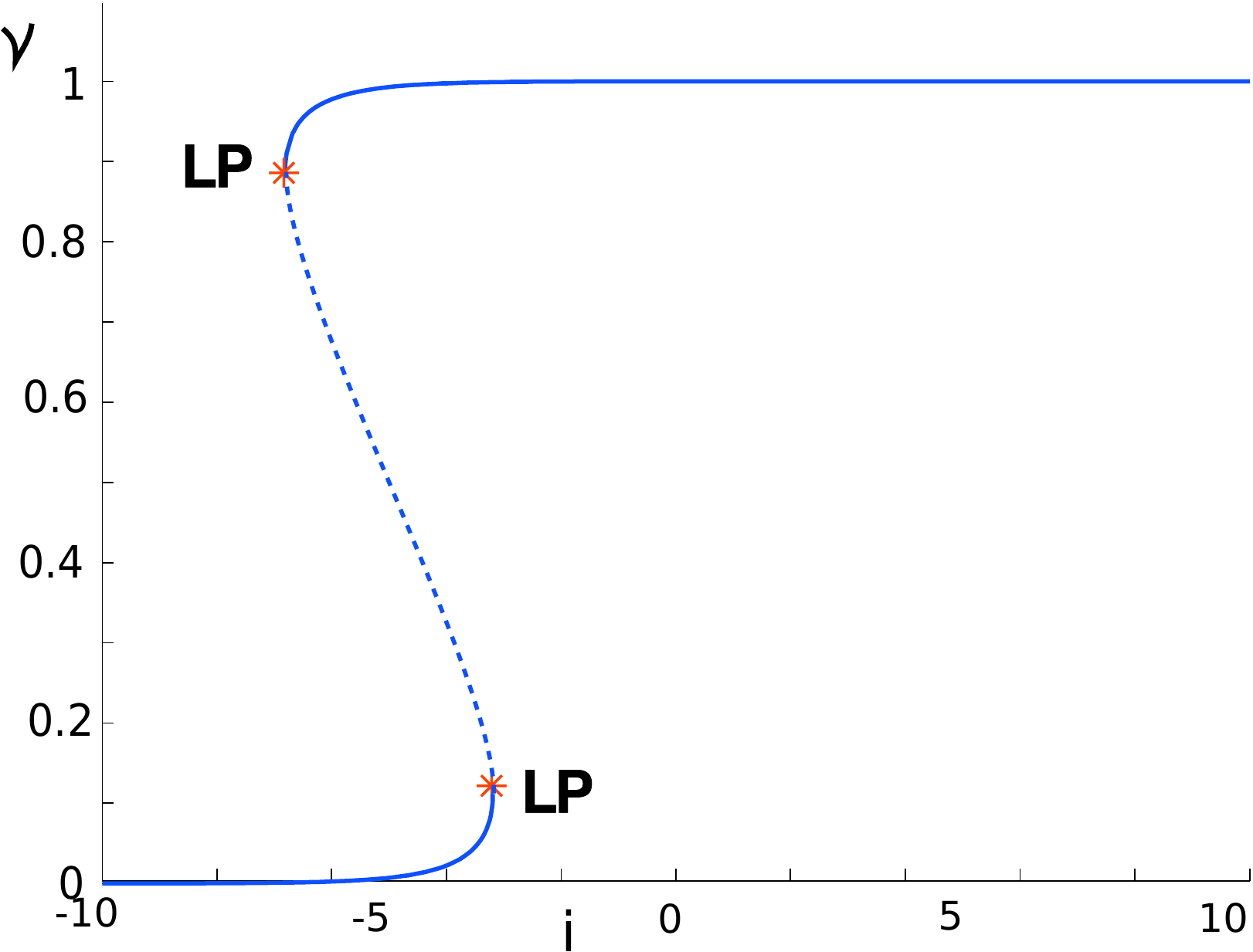} \label{sfig:WC1pop}}\quad
 		\subfigure[BCC bifurcations (firing rate)]{\includegraphics[width=.4\textwidth]{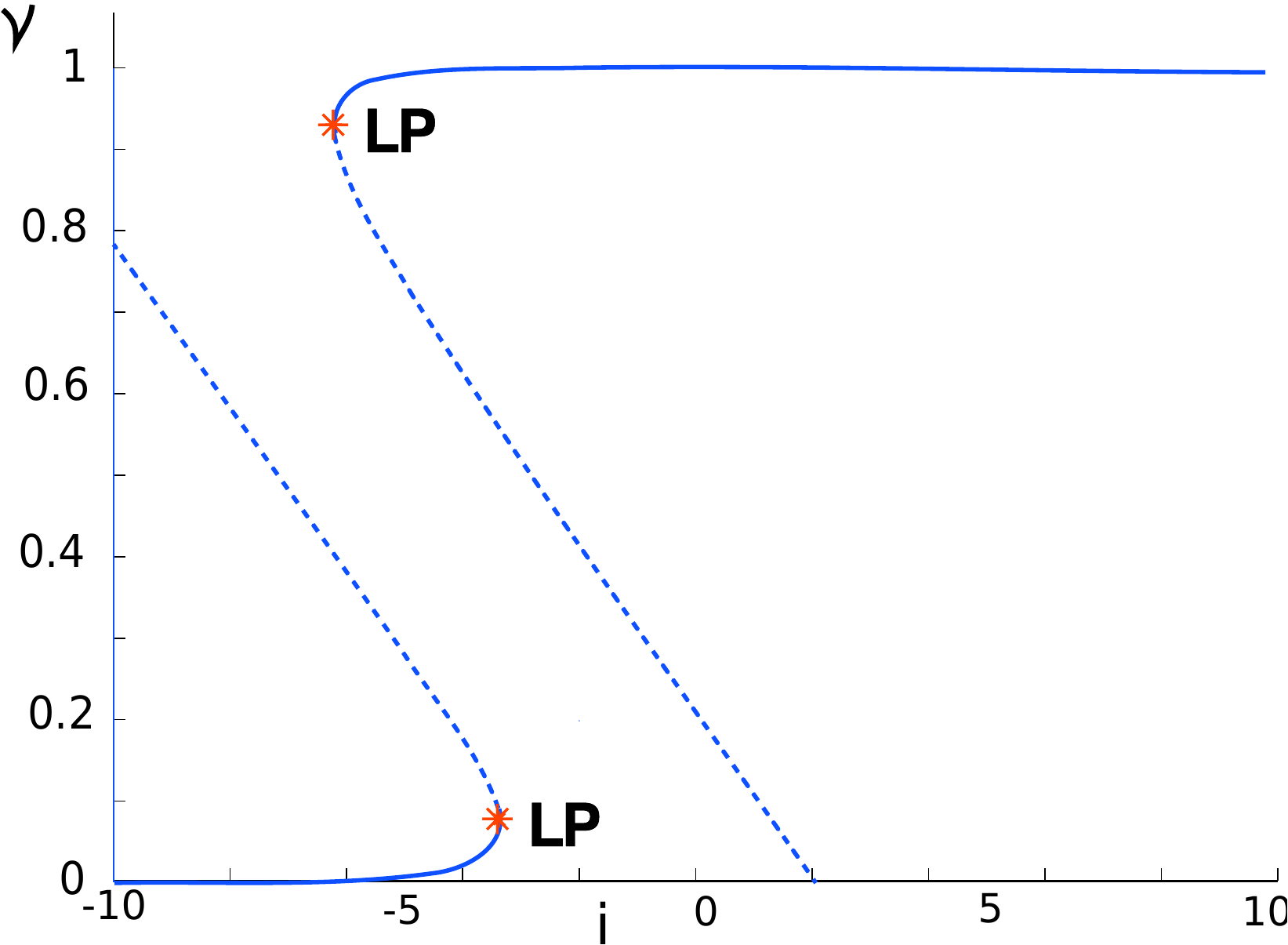}}\\
 		\subfigure[BCC bifurcations (Correlations)]{\includegraphics[width=.4\textwidth]{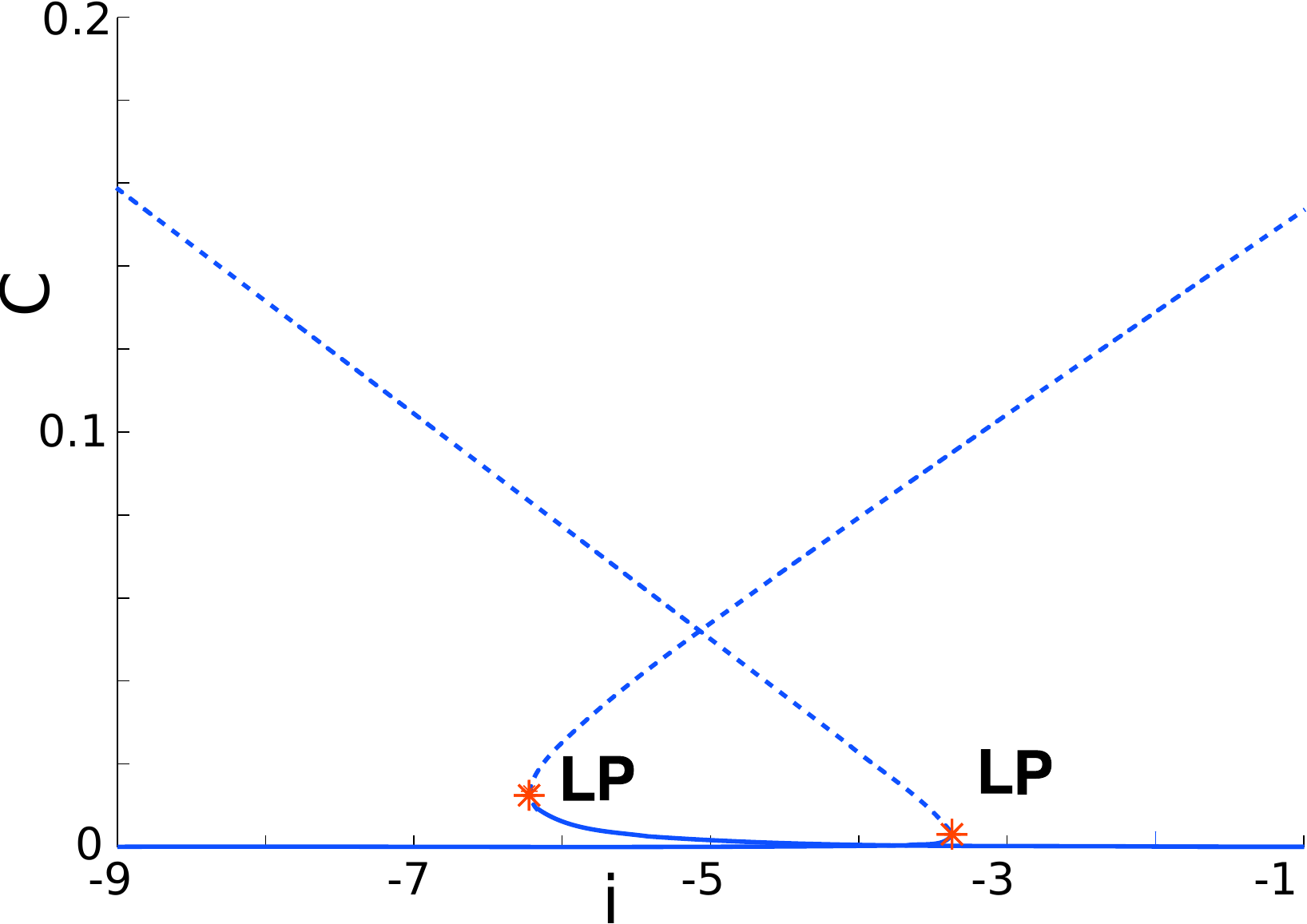}}\quad
 		\subfigure[Codimension two bifurcations]{\includegraphics[width=.4\textwidth]{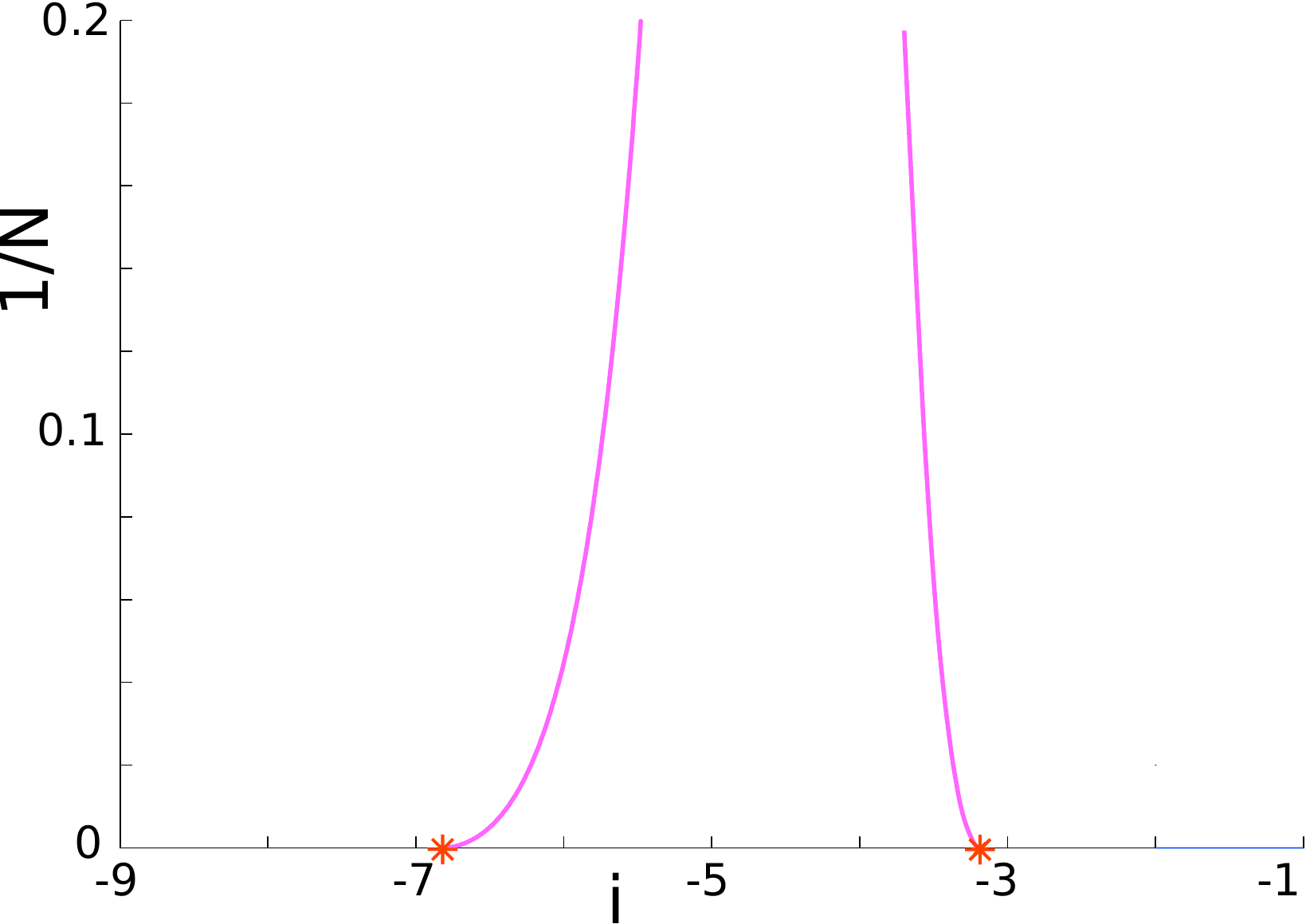}}\\
 	\end{center}
 	\caption{BCC model and the infinite-size system (a) Bifurcation diagram of WC system with respect to the input $I$. Plain curve: stable fixed point, dotted curve: unstable fixed points, LP stands for limit point (or saddle-node bifurcations) (b) and (c) Bifurcation diagram of BCC model with respect to $I$, for $N=50$ neurons. (b) Firing rates, (c) Correlations. (d) Codimension two bifurcation diagram with respect to $I$ and the network size $n=1/N$. Pink line: Saddle-node bifurcations (Limit Points). The star at $n=0$ denote double zero eigenvalue in the Jacobian matrix singular point. The values correspond to the values of WC saddle-node bifurcations.}
 	\label{fig:1popcodim2BC}
 \end{figure}
 
 In this case, we therefore illustrated the fact that BCC system can smoothly unfold the WC system by introducing correlations. These correlations vanish as the size of the population increases, yielding a purely Poisson firing with firing rates given by the solution of WC equations. For the same parameters and functions, it is easy to show, following the same steps, that Bressloff rescaled model also smoothly unfolds the WC system and that the finite-size system presents exactly the same type of bifurcations with two separated branches of fixed points each of which collapse in the limit where the number of neurons is infinite (not shown), and the value of the correlations of the fixed points decreases towards zero when the number of neurons increases. However in this case, one has to bear on mind that the interpretation of the result slightly differs: the limit behavior is now seen as a purely asynchronous state where all the neurons of the population fire independently in the limit $N\to\infty$. These behaviors will closely match the Markovian model's behavior as shown in section \ref{sec:OnePop}. In these cases, WC system conveniently summarizes the behavior of large networks, and provides a simple model to study large networks. However, there exist models for which this conclusion does not hold, and where the infinite-size system's behavior unfolds into more complex behaviors.
 
 \subsubsection{Singular unfolding of the infinite-size system}\label{ssect:OnePopHopf}
 We now turn to study the possible singular unfolding of the infinite size system in one population. To this purpose, we will be specifically interested in studying oscillatory behaviors in the finite-size systems that are not present in the limit where the number of neurons tends to infinity. We will call these qualitative distinctions between finite-size and infinite-size systems singular unfolding. 
 
 To this purpose, we choose a framework where fixed points are easily described, and along the fixed point search for sufficient conditions to get a supercritical Hopf bifurcation, yielding oscillatory behaviors in single population networks that are neither present in the infinite-size system nor in the WC system. To fix ideas, similarly to the previous case treated, we consider BCC one population model. We consider here a non-physiological cases by allowing the firing rate variable to be negative. We choose an activation function $f$ such that $f(I)=0$ for some fixed $I$, which can always be done with an unspecified sigmoid $\tilde{f}$ through the modification $f(x)=\tilde{f}(x) - \tilde{f}(I)$. This quite unrealistic case will allow studying the qualitative differences that can appear between the finite and infinite size systems. However, the fixed point $0$ is quite particular, and we will see that this phenomenon is not generalized in a simple way when considering positive activation functions $f$ and positive firing rates (the affine transform $\tilde{f}(x)-\tilde{f}(I)$ does not have a trivial effect on the nonlinear system). 
 
 The BCC single population model reads:
 \[\begin{cases}
   \der{\nu}{t} &= -\alpha \nu +f(w\,\nu+I) + \frac 1 2 f''(w\,\nu+I) \,w^2 C\\
   \der{C}{t} &= -2\alpha C + 2 f'(w\,\nu+I)w\,(C + \frac 1 N \nu)
 \end{cases}\]
 Therefore, under the assumptions made, it is easy to see that the null solution $\nu=0,C=0$ is always solution of the system. The Jacobian matrix $J$ at this point reads:
 \[
 \left (
 \begin{array}{cc}
   -\alpha + w\,f'(s) & \frac 1 2 f''(s) w^2\\
   2f'(s)\frac{w}{N} & 2\,(-\alpha + w\,f'(s))
 \end{array}
 \right),
 \]
 where we denoted $s=w\,\nu+I=I$ for $\nu=0$. The trace of the Jacobian matrix at this point is equal to $3(-\alpha + w\,f'(s))$ and the determinant is equal to $2(-\alpha + w\,f'(s))^2-f'(s)f''(s)w^3/N $. The trace vanishes at the null fixed point for $(\alpha, w)$ such that 
 \[\alpha=w f'(I),\] 
 and therefore there exist a set of parameters for which the trace vanishes. In order to ensure that this null trace corresponds to the presence of a pair of purely imaginary eigenvalues, we further need to ensure that the determinant is strictly positive. When the trace vanishes, the determinant simply reads $-f'(s)f''(s)w^3/N$. Since $f$ is a sigmoidal function, $f'$ is always positive, and therefore the determinant is positive as soon as $f''(s)<0$, which can be ensured for some specific firing rate functions. In that case, we will denote $\omega_0=\sqrt{-f'(s)f''(s)w^3/N}$. 
 
 Therefore, under the assumptions that there exists $I$ such that $f(I)=0$ and $f''(I)<0$, there exist parameters $\alpha$ and $w$ such that the Jacobian matrix of the system at the null solution has a pair of purely imaginary eigenvalue. Let us now check that this point corresponds to a Hopf bifurcation when varying the parameter $\alpha$. To this purpose, we check transversality and genericity conditions of the Hopf bifurcation at this point, as given for instance in \cite{kuznetsov:98}. These calculations are provided in appendix \ref{append:Calculations}.  The transversality condition is easily checked. It consists in showing that the differential real part of the eigenvalues of the Jacobian matrix at this point, with respect to the parameter $\alpha$, does not vanishes. Let us denote by $\mu(\alpha)$ the real part of the eigenvalues. Since we are in a planar system, we have:
 \begin{align*}
 	\mu(\alpha) &=\frac 1 2 Tr(J)\\
 	& = \frac 3 2 (-\alpha + w\,f'(s))\\
 	\der{\mu(\alpha)}{\alpha} &= -\frac 3 2 < 0
 \end{align*}
 Therefore the transversality condition of the Hopf bifurcation is satisfied at this point. In order to fully state that the system undergoes a Hopf bifurcation, we need to show that the first Lyapunov exponent of the system at this point does not vanish. This Lyapunov exponent, after some tedious calculations using the formula given in \cite{kuznetsov:98} (see appendix \ref{append:Calculations}), is shown to be of the same sign as:
 \[\frac{w^2\,N}{f'(I)^2}\left [ f^{(3)}(I)f'(I) \left(1+\frac {1}{\omega_0}\right) + f''(I)^2\left(\frac{2}{\omega_0} - \frac{14}{3} \right)\right].\]
 The sign of this expression is governed by the values of the differentials of $f$ at the point $I$. If the expression in the bracket is negative, then the Hopf bifurcation is supercritical and the system has stable oscillations. Let us now investigate the dependence of this cycle on the network size. We have shown that $\omega_0=\sqrt{-f'(s)f''(s)w^3/N}$. Therefore, if this cycle appears for some finite network, it will lose stability as the size of the population increases. The period of the generated cycle is proportional to the inverse of the square root of $N$, and therefore slowly tends to zero as the network size increases (note that the cycle will lose stability as the network size increase, since for sufficiently large populations, the Hopf bifurcation will be subcritical). 
 
 Now that we identified simple conditions for the Hopf bifurcation to take place, we exhibit a network that indeed has this bifurcation. We choose for instance as activation function an hyperbolic tangent function \[f(x)=\tanh(x) - \tanh(I)\]
 with $I=0.5$. In that case, we have:
 \begin{equation*}
 	\begin{cases}
 		f(I)&=0\\
 		f'(I)&\approx 0.786\\
 		f''(I) &\approx -0.727<0\\
 		f'''(I)&\approx -0.56		
 	\end{cases}
 \end{equation*}
 Therefore this function satisfies the assumptions done in the previous paragraph. Moreover, since $f'''(I)<0$ and $(2/\omega_0-\frac{14}{3})<0$ ( for $N$ small enough), the first Lyapunov coefficient is negative, and therefore the system undergoes a supercitical Hopf bifurcation and presents oscillations, as illustrated in Figure~\ref{fig:oscillations1pop}. 
 
 \begin{figure}[!h]
   \begin{center}    \subfigure[Oscillations]{\includegraphics[width=.4\textwidth]{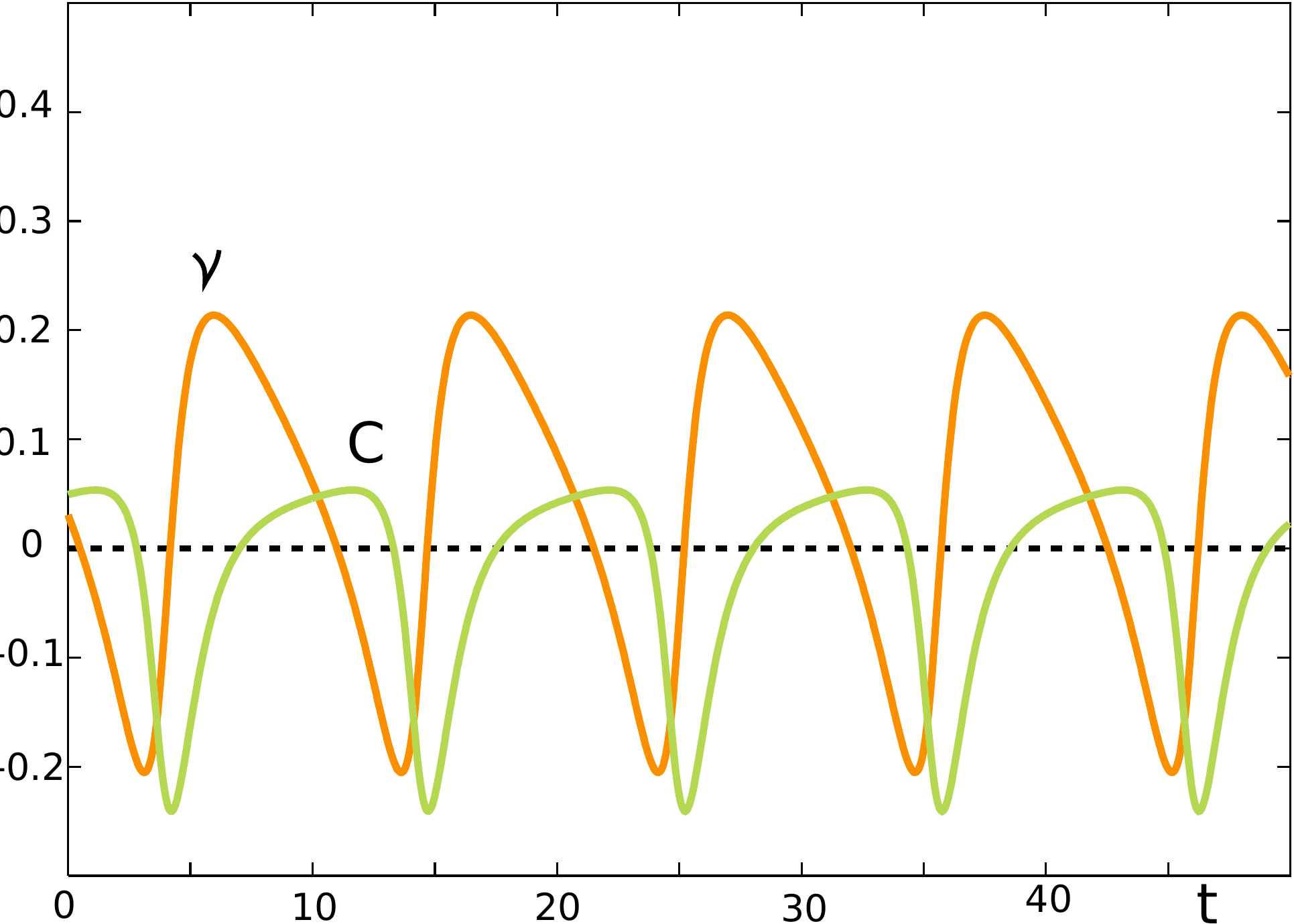}} \quad
     % \subfigure[Phase plane]{\includegraphics[width=.3\textwidth]{Buice1Pop_cyclePhasePlane.pdf}} \\
     \subfigure[Continuation and Homotopy]{\includegraphics[width=.5\textwidth]{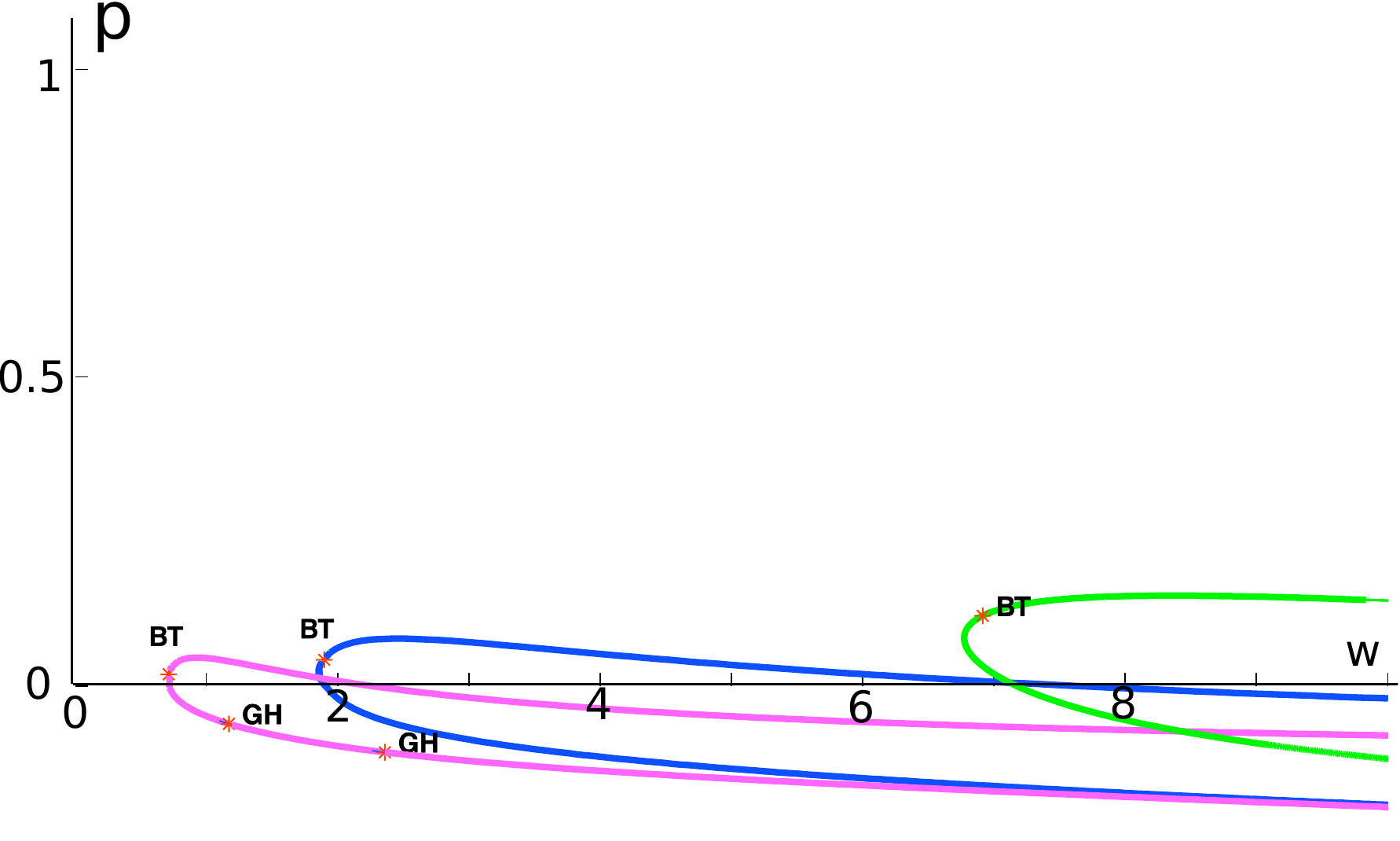}}
   \end{center}
   \caption{Oscillations in a single population network with finite size. (a) is a cycle observe for $f_0$ (p=0), (b) is the codimension two continuation with respect to $w$ and $p$ of the Hopf bifurcation for different values of $\alpha$: pink for $\alpha=.3$, blue: $\alpha=.8$ and green: $\alpha=3$.}
   \label{fig:oscillations1pop}
 \end{figure}
 
 Therefore, we exhibited a case where the finite-size equations have important qualitative differences from the infinite-size equations. However this system involved non-physiological values of the parameters, in particular a non-positive activation function $f$. Therefore, this behavior will never appear in the Markov process that yielded BCC equations. 

\review{The question that then arises is whether such behaviors appear in plausible settings.} In order to answer this question and search for  a system with positive sigmoidal functions that indeed present oscillations in one-population networks, we continued the Hopf bifurcation while continuously transforming the non-positive sigmoidal transform into a positive sigmoid. For instance, we consider $f$ a non-positive sigmoidal function for which the Hopf bifurcation exhibited here appears, and define $f_p = f - p\min(f)$ a transformation of $f$ such that $f_0=f$ and $f_1$ is a positive sigmoid. This homotopic transform smoothly maps the vector field presenting an attractive cycle onto a vector field with a positive activation function. Since we have proved the genericity and transversality conditions on the Hopf bifurcation identified, we are in position to continue it as the parameter $p$ varies together with $\alpha$ in a codimension two bifurcation diagram (by application of the inverse function theorem). We observe in that case that the Hopf bifurcation cannot be continued when varying the homotopy parameter until reaching a positive sigmoid (see Figure~\ref{fig:oscillations1pop}(b) for the example provided above). We have tried numerous other sets of parameters and have never been able to continue periodic orbits to a regime  where there is a non-negative activation function.
 
 Therefore, though the one-population BCC equations can display oscillations for non biologically relevant parameters, we conjecture that there is no acceptable oscillatory solution for positive activation functions.
 
 \subsection{\review{A two-populations finite-size network}}\label{sec:TwoPopsFinite}
 We have seen in the previous section that the one-population finite-size BCC model was accurately approximated by the infinite-size system (and therefore the WC system from the result of a previous section) for biologically realistic parameter, though oscillatory phenomena may occur for non-biologically relevant parameters that allow both the firing-rate and the correlations to be negative. The picture will be quite different in multi-population networks. Indeed, in higher dimensions, the WC system can have  Hopf bifurcations that create very degenerate  bifurcation points in the infinite-size system, which may non-trivially unfold in the finite-size systems. In this section, we consider the two population networks with excitatory self-interaction and negative feedback loop, called Model I, introduced in section \ref{sec:NegFeedback} and address first the case of Bressloff model, before showing that BCC model presents the same features in appendix \ref{appen:2PopsBCFinite}. \review{This study will allow addressing in particular the question of the existence of so-called \emph{quasicycles} and also of aperiodic solutions in the system close to the Hopf bifurcation of the mean-field Wilson and Cowan limit, addressed recently in the same model by Bressloff in~\cite{bressloff:10}. These quasicycles correspond to intrinsic noise-induced oscillations in parameter regimes where the deterministic mean-field limit (in that case Wilson and Cowan system) is below the Hopf bifurcation, and where the system features a stable focus. This phenomenon was also recently found in similar systems arising in biochemical oscillations of cellular systems, see e.g.~\cite{bolland-galla-etal:08,mckane-nagy-etal:07}. Our approach to these phenomena consists in unfolding the Hopf bifurcation of the mean-field system to finite network sizes, in order to understand how the intrinsic noise (that can be seen as through the Langevin approximation of the system valid for large population sizes as used in appendix~\ref{append:RodigTuck} and in~\cite{bressloff:10}) interacts with the Hopf bifurcation.}
 
 To fix ideas, we focus in this section on the particular network introduced in section \ref{sec:NegFeedback}, namely the two-population rescaled Bressloff model. Denoting $c=C$, the equations read: 
 \begin{equation*}
 	\begin{cases}
 		\nu_1'&=-\alpha_1 \nu_1+f(s_1)+\frac{1}{2}\,f''(s_1)\,(w_{11}^2\,c_{11}+w_{12}^2\,c_{22}+2\,w_{12}\,w_{11}\,c_{12})\\
 		\nu_2'&=-\alpha_2 \nu_2+f(s_2)+\frac{1}{2}\,f''(s_2)\,(w_{22}^2\,c_{22}+w_{21}^2\,c_{11}+2\,w_{22}\,w_{21}\,c_{12})\\
 		c_{11}'&=\frac{1}{N_1} [\alpha_1 \nu_1 + f(s_1)]-2\alpha_1 c_{11}+2\,f'(s_{1})\,(w_{11}\,c_{11}+w_{12}\,c_{12})\\
 		c_{22}'&=\frac{1}{N_2} [\alpha_2 \nu_2 + f(s_2)]-2\alpha_2 c_{22}+2\,f'(s_{2})\,(w_{21}\,c_{12}+w_{22}\,c_{22})\\
 		c_{12}'&=-(\alpha_1+\alpha_2) c_{12}+f'(s_{1})\,(w_{11}\,c_{12}+w_{12}\,c_{22}) \\
 		& \qquad + f'(s_2)\,(w_{21}\,c_{11}+w_{22}\,c_{12})\\
 	\end{cases}
 \end{equation*}
 where we denoted:
 \[
 \begin{cases}
 	s_1 &= w_{11}\,\nu_1+w_{12}\,\nu_2+i_1\\
 	s_2 &= w_{21}\,\nu_2+w_{22}\,\nu_2+i_2
 \end{cases}
 \]
 
 For the set of parameters studied in this section and a number of neurons $N=50$, Bressloff finite-size system undergoes four Hopf bifurcations labelled H1 through H4 and six saddle node bifurcations LP1 through LP6. It features two distinct families of limit cycles, one that undergoes two folds of limit cycles LPC1 and LPC2, and one that undergoes a Neimark-Sacker (Torus) bifurcation, as displayed in Figure~\ref{fig:BressBifs02}(b), and are compared with the standard WC system that displays one Hopf bifurcation point H and two saddle node bifurcations LPa and LPb. 
 
 \begin{figure}[!h]
 	\centering
 		\subfigure[Wilson and Cowan system]
 		{\includegraphics[width=.45\textwidth]{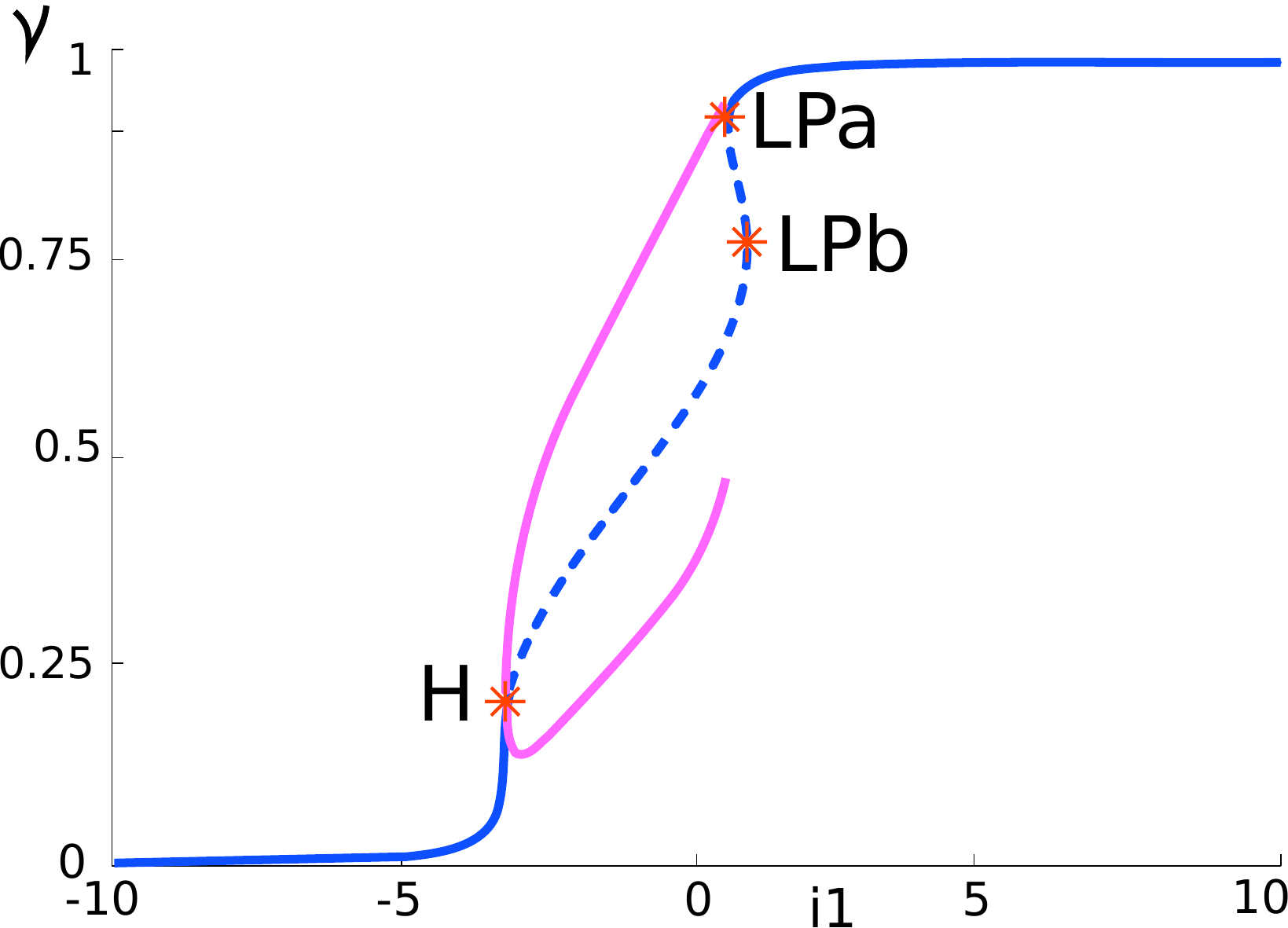}}\qquad
 		\subfigure[Bressloff model ]
 		{\includegraphics[width=.45\textwidth]{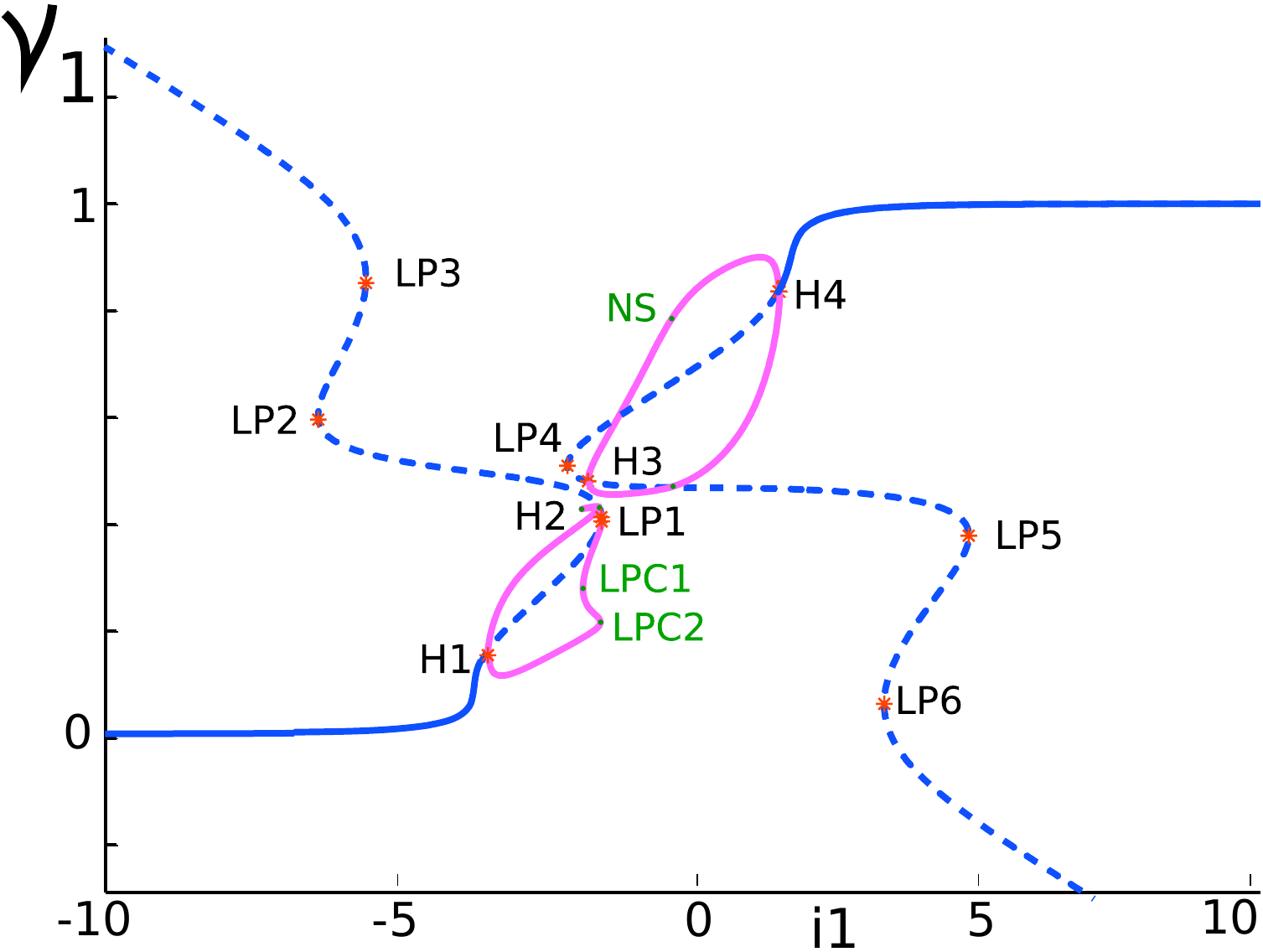}}
 	\caption{Bifurcation diagram of Model I: (a) Wilson and Cowan system and (b) Bressloff system. Blue: equilibria, pink: cycles. Bifurcations of equilibria are denoted with a red star, LP represents a saddle-node bifurcation (Limit Point), H a Hopf bifurcation. The four Hopf bifurcations share two families of limit cycles. One of these undergoes two fold of limit cycles LPC, and the other branch of limit cycle a Neimark Sacker (Torus) bifurcation. }
 	\label{fig:BressBifs02}
 \end{figure}
 
 The analysis of this bifurcation illustrates the fact that the small finite-size system studied have clear qualitative differences with the WC system. However, because of the properties of the vector field, and in particular its smooth dependency with respect to the size of the network $N$, the bifurcation diagram of Fig.~\ref{fig:BressBifs02} smoothly connects to the degenerate bifurcation diagram of the infinite-size system presented in section \ref{sec:NegFeedback}. 
 
% \subsubsection{\review{Unfolding Wilson and Cowan bifurcation diagram}}
Let us now investigate how the bifurcation diagram of the infinite system unfolds into the bifurcation diagram of the finite-size system. To this end, we continue of the bifurcation diagram of figure Fig.~\ref{fig:BressBifs02}(b) as the number of neurons increases, as shown in the codimension two bifurcation diagram of figure Fig.~\ref{fig:Codim2BC2Pops}. The structure of the new bifurcation diagram appear way more intricate than WC's, and in particular, each bifurcation point of WC system, corresponding to degenerate dynamics of the infinite-size system, unfolds non-trivially into different generic bifurcations in the finite-size moment equations. Let us describe the diagram plotted in  figure Fig.~\ref{fig:Codim2BC2Pops}. We observe that the bifurcation diagram of the finite-size system, for any $N<232$ neurons will be similar to the one plotted in figure Fig.~\ref{fig:BressBifs02}. As $N$ increases, the bifurcation diagram is smoothly transformed except at two particular codimension two bifurcation point: a cusp bifurcation at $n\approx 0.0867$, from which emerge two additional saddle-node bifurcations denoted LP7 and LP8. The value of the correlation variables of these points progressively converge to zero as the network size increases. These two saddle-node bifurcations correspond in the infinite-size limit to the two saddle nodes of WC system LPa and LPb, and are not present in the finite-size model for a small number of neurons. The second codimension two bifurcation appearing in the diagram correspond to the disappearance of the Hopf bifurcation point H4 through a Bogdanov-Takens bifurcation with the saddle-node bifurcation manifold corresponding to LP8. Except from these two codimension two bifurcations, the continuation of all other bifurcations is smooth for any $n>0$. But in the limiting case $n=0$, the system is highly singular and different bifurcation manifold collide, as we now discuss in more details.
 \begin{figure}[!h]
 	\centering
 		\includegraphics[width=\textwidth]{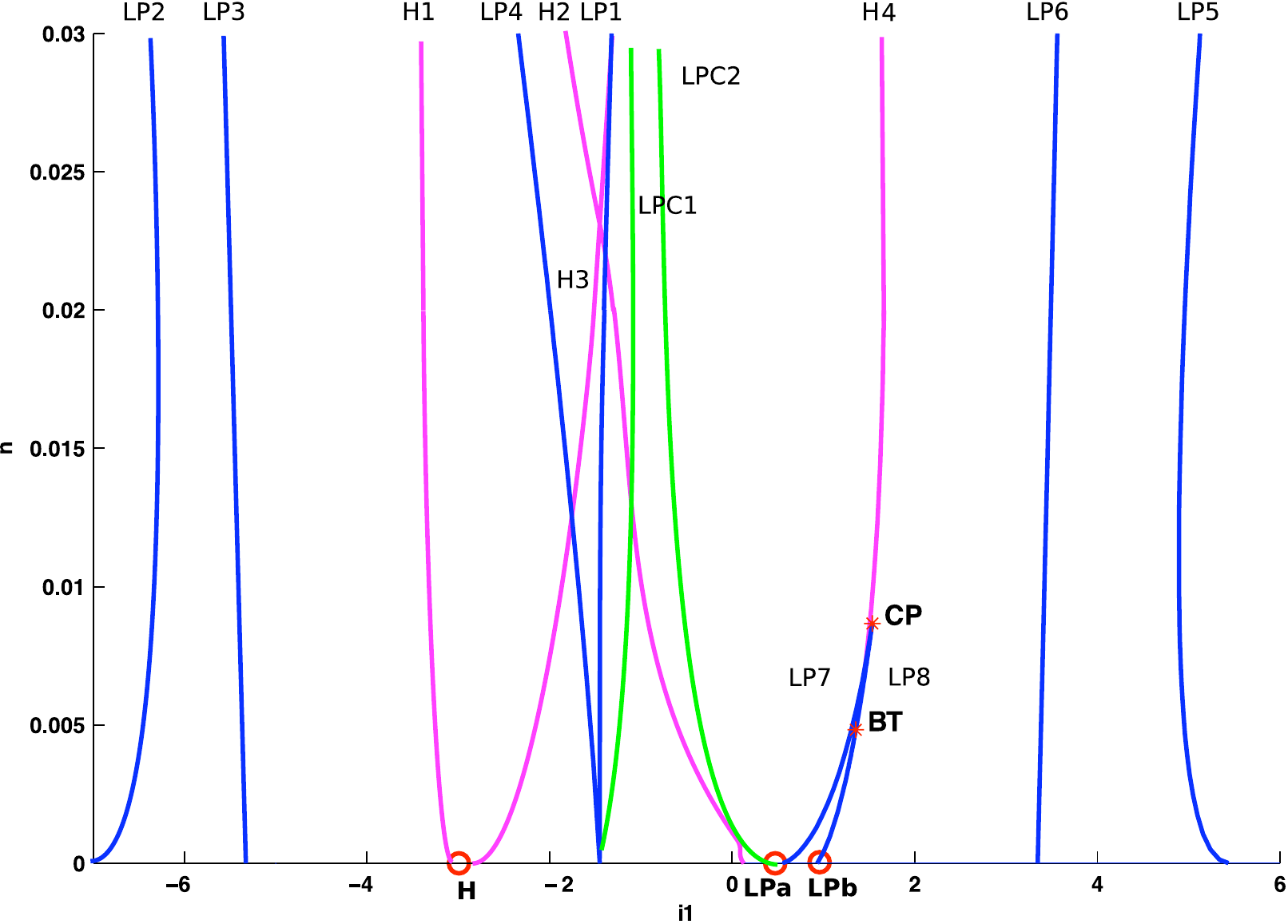}
 	\caption{Codimension two bifurcations  with respect to the input to population 1 and the population size in Bressloff's model. Pink: Hopf bifurcations, blue: saddle node bifurcations, green: folds of limit cycles. The continuation of the Hopf bifurcations H1 and H4 merge at WC's H point when $N\to \infty$. We observe that the fold of limit cycles merge with other bifurcations in the limit of infinitely many neurons (see text). In the limit $N\to \infty$ (corresponding to $n=0$) we labelled the points H, LPa and LPb arising in WC system. Points with zero correlations are circled in red, all other points have non-zero correlations.}
 	\label{fig:Codim2BC2Pops}
 \end{figure}
Let us first follow the continuation the Hopf bifurcation labelled H1 in diagram of Fig.~\ref{fig:BressBifs02}(b) as the network size increases. We observe that this Hopf bifurcation collides, in the limit $N\to \infty$, with the continuation of the Hopf bifurcation H3 of the second branch of fixed point. Their collapse corresponds to the degenerate Hopf bifurcation point of the infinite model, denoted H in Fig.~\ref{fig:BressBifs02}(a). The cycle originating from H1 and the cycle originating from H3 will therefore emerge from this fixed same point, accounting for the result presented in section~\ref{section:Infinite} and in particular figure Fig.~\ref{fig:CyclesMeanField2pops} where we evidenced the presence of two different cycles emerging from the same bifurcation point H. Moreover, we have seen that the corresponding branch of limit cycles in the infinite-size system bifurcation diagram Fig.~\ref{fig:CyclesMeanField2pops} loses stability. This loss of stability corresponds to the continuation of the Neimark-Sacker bifurcation NS of figure~\ref{fig:BressBifs02} that can be continued in the mean-field limit. However, an important distinction between the finite and infinite case is that This cycle undergoes an additional period-doubling bifurcation when the size of the network increases, as observed in the bifurcation diagram of the infinite-size system, and as shown in figure \ref{fig:PoincareChaos}. In that figure, we continued these cycles for one fixed value of the parameter $I_1=-0.5$, as the number of neuron increases. The family of limit cycles continued is stable in a bounded interval of values for the parameter $N$. For a number of neurons close to $N=142$ (precisely $n_2=0.01418$), the branch of limit cycles undergoes a fold of limit cycle, and therefore the branch of limit cycles does not exists for smaller networks. For networks larger than $N=142$ neurons, the system presents a branch of stable and a branch of unstable limit cycles. Stable cycles persist up to networks as large as $N=1700$ neurons (precisely $n_2=0.001174$). At this size, where the branch stable limit cycles changes stability through a subcritical Neimark Saker (Torus) bifurcation with no strong resonance. When increasing further the size of the network, the system shows the presence of a chaotic attractor which clearly cannot exist in the two-dimensional WC system, yielding an important qualitative distinction between large scale networks and the relative mean-field limit (see \ref{fig:PoincareChaos}). This chaotic behavior exists until the infinite-size is reached.
 \begin{figure}[!h]
 	\begin{center}
 		\subfigure[Bifurcations of cycle]{\includegraphics[width=.45\textwidth]{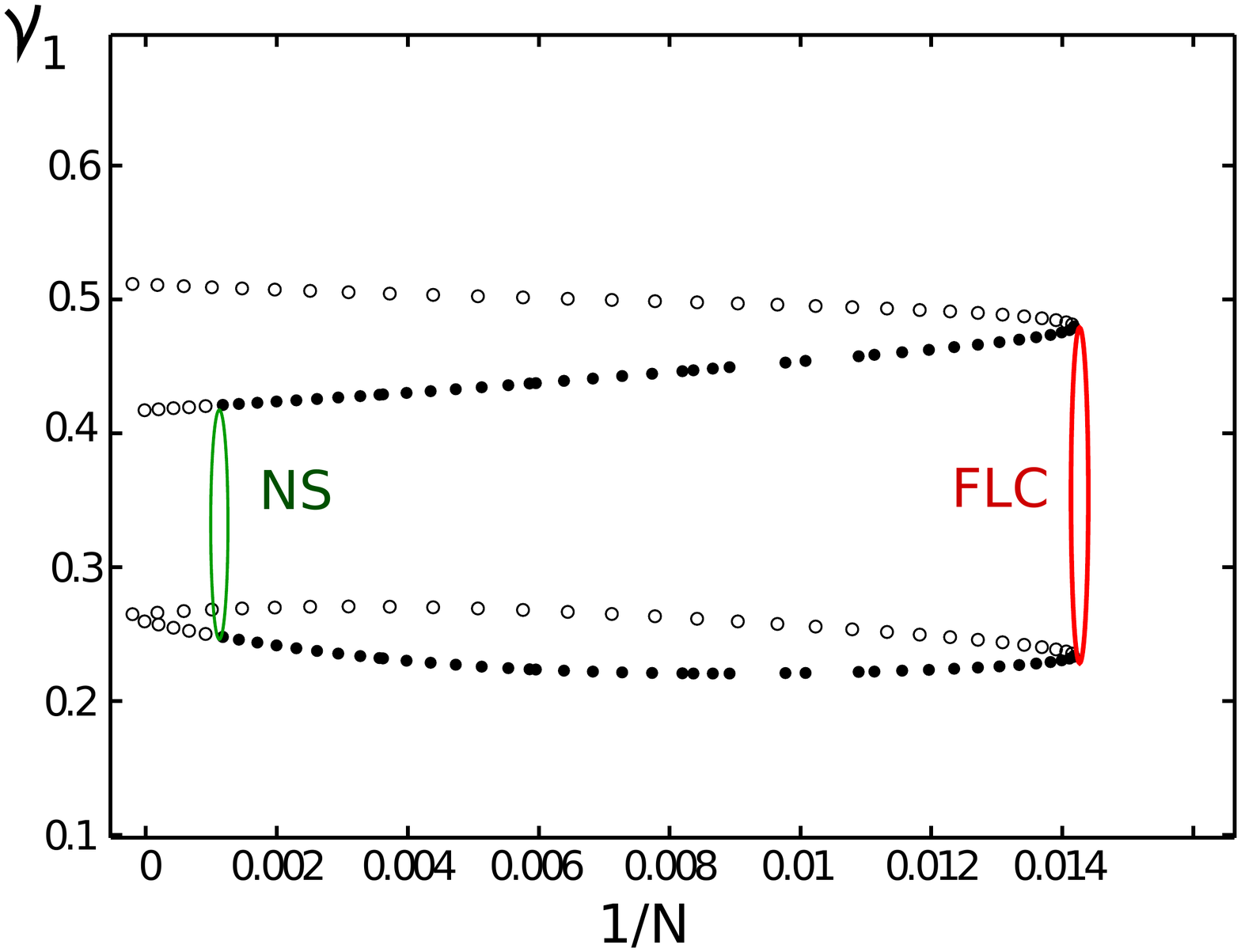}}
 		\subfigure[Chaos]{\includegraphics[width=.45\textwidth]{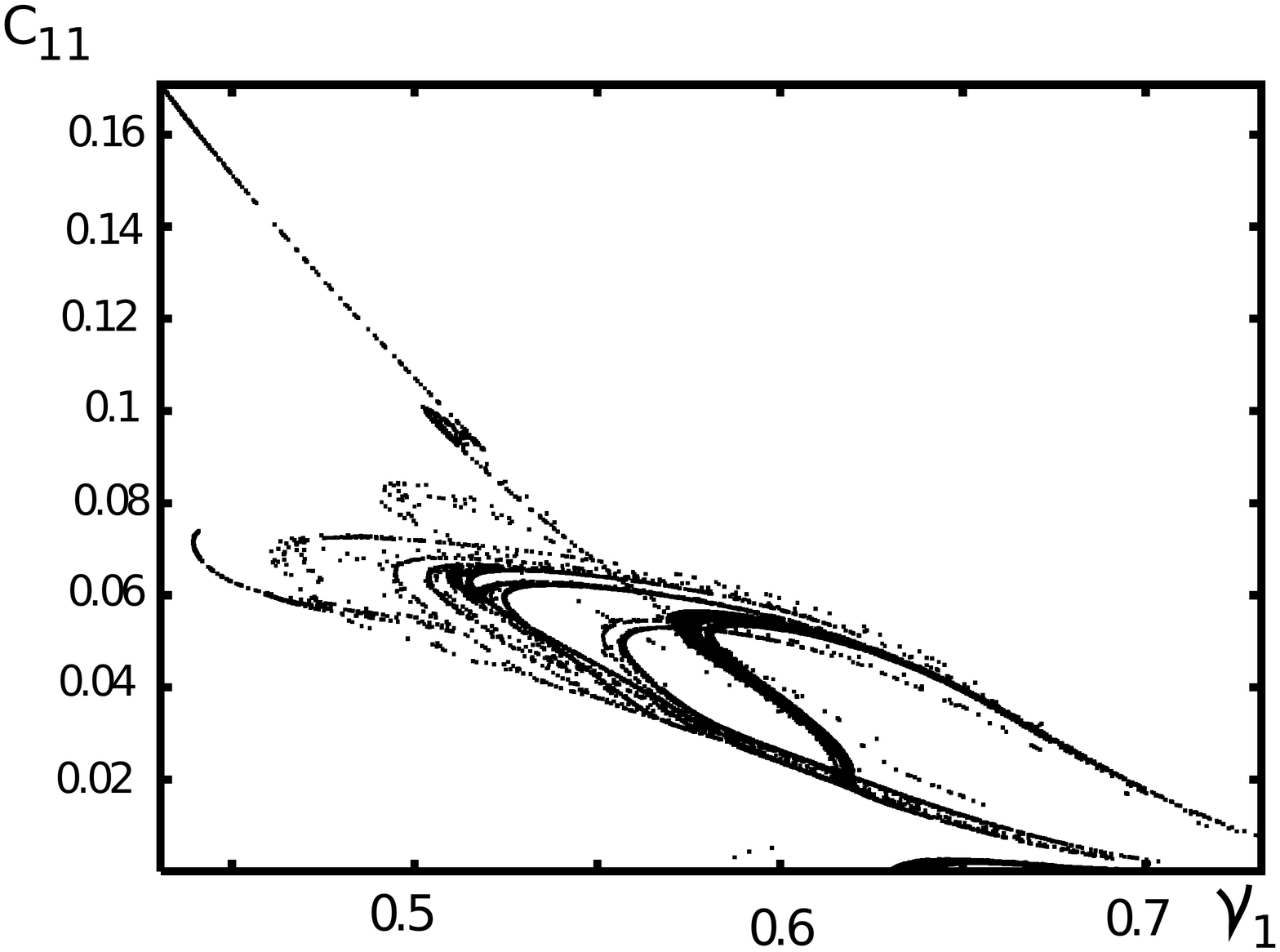}}
 	\end{center}
 	\caption{Chaotic behavior for large networks: (a) bifurcation diagram as a function of the network size. NS: Neimark-Sacker (Torus) bifurcation and FLC: fold of limit cycle. (b) Poincar\'e maps representing $a_1$ and $c_{11}$ with Poincar\'e section $a_2=0.7$. }
 	\label{fig:PoincareChaos}
 \end{figure}

Let us now follow the branch of limit cycles arising from H3 as the network size varies. We observe in our codimension three bifurcation diagram that the branch of Hopf bifurcations H3 exists until the infinite-size is reached. In that limit, we observe that this branch of Hopf bifurcation asymptotically meets the fold of limit cycles LPC2 and the curve of saddle-node LP7 and the saddle homoclinic curve arising from the Bogdanov-Takens bifurcations. This point corresponds therefore to a very degenerate behavior of the system, and corresponds, in WC bifurcation diagram, to the saddle-node bifurcation LPa that also corresponds to the homoclinic bifurcation point. Regarding the fourth Hopf bifurcation H4, we already observed that it was continued until it disappeared, in finite size, through a Bogdanov-Takens bifurcation.

Let us now consider the continuation of the saddle-node bifurcations. The bifurcation diagram of WC presents two saddle-node bifurcations LPa and LPb that correspond to the two saddle-node arising from the cusp bifurcation CP of Fig.\ref{fig:Codim2BC2Pops} that are labelled LP7 and LP8. In the mean-field limit, the saddle-node bifurcations corresponding to the continuation of LP1 and LP4 merge in a cusp bifurcation and disappear. At this same point, the fold of limit cycles LPC1 collapses and disappears in a homoclinic bifurcation. The saddle-node bifurcations LP2 and LP6 disappear in the mean-field limit by being continued tangentially to the line $n=0$. The continuations of the saddle-node bifurcations LP3 and LP6 crosses the line $n=0$ with no singularity and therefore exists in the infinite size system, corresponding to a non-zero correlation saddle-node bifurcation and that do not correspond to any plausible behavior (all plausible behaviors correspond to zero correlation points). Eventually, the saddle-node bifurcation LP7 connects with the fold of limit cycle, generating the homoclinic bifurcation of the infinite-size system.

 We therefore observe that the bifurcation diagram of the finite-size Bressloff model non-trivially unfolds  the infinite-size system, and that at any finite scale the bifurcation diagram and the solutions it present differ significantly from the infinite-size system and from WC system. In particular, it shows additional cycles, as observed in the infinite-size system previously, and a chaotic effect that is not present in the two-dimensional WC system. This analysis also illustrates a mesoscopic-scale phenomenon in the presence of a periodic orbit that is stable only when the size of the network is comprised roughly between one hunded and one thousand neurons, a typical size of a cortical columns often modelled by Wilson and Cowan system. These results remain valid for BCC model, as shown in appendix \ref{appen:2PopsBCFinite}. The question that now arises is whether  these behaviors indeed reflect behaviors of the Markovian model.

 \subsection{Finite-size Markovian model}
 In the previous section, we investigated the distinctions between the solutions of the finite-size BCC model and compared these with those of the infinite-size and of WC systems. We now return back to the original Markovian system that allowed derivation of Bressloff and BCC equations, and compare simulations of this network with the finite-size Bressloff model, composed of two differential equations arising from the moment truncation of this random variable. We will perform this analysis in the two models with two populations investigated in the paper, Model I and Model II. \review{This analysis is particularly interesting in regions of the parameter space where the system presents a fixed-point behavior, and where the moment expansion is a relevant representation of the system. In the periodic orbit regimes, we discuss a suitable methodology, beyond the scope of the present paper, that is suitable to analyze the system.}
 
In order to simulate the Markov chain, we make use of Doob-Gillespie's algorithm \cite{doob:45,gillespie:76,gillespie:77}. From the master equation governing, e.g. BCC model:
\begin{multline*}
	\der{P(n,t)}{t}=\sum_{i=1}^M \Big[\alpha_i\,(n_i+1) P(n_{i+},t) - \alpha_i n_i P(n,t) \\ 
	+ F_i(n_{i-})P(n_{i-},t) - F_i(n)\,P(n,t) \Big]
\end{multline*}
we derive the transition rates of the state variable $n$. We use these rates to simulate  sample paths of the state variable $n$ and rescale these to compare to the solutions obtained for BCC or Bressloff models and the infinite-size models. Given the configuration $n(t)$ of the network at time $t$, we have {in the BCC case (Bressloff case is straightforwardly treated using the same method)}:
 \begin{itemize}
 	\item The probabilistic intensity for the transition $n_i(t+1)=n_i(t)-1$ is equal to  $q_i=\alpha_i \, n_i$,
 	\item The probabilistic intensity for the transition $n_i(t+1)=n_i(t)+1$ is equal\footnote{{Note that the activation functions are different in BCC and Bressloff models.}} to $f_i=f(\sum_{j=1}^M w_{ij}\,n_j+I_i)$.
 \end{itemize}  
 Given an initial condition $n(0)$, a simulation consists of computing all possible transitions probabilities $q_i$ and $f_i$. We then draw the time for the next transition as an exponential random variable with intensity $Q=\sum_i (q_i+f_i)$. This time provides the next event occurring in the chain. By the properties of exponential laws, the transition is $n_i(t+1)=n_i(t)-1$ (resp.  $n_i(t+1)=n_i(t)+1$) for some $i\in\{1,\ldots,M\}$ with probability $q_i/Q$ (resp. $f_i/Q$). This simulation algorithm for the Markov process $n(t)$ is exact in law, and does not involve any approximation such as time-discretization, and therefore allows efficient simulation of the sample path and of its statistics. These statistics are computed using a Monte-Carlo algorithm consisting of simulating many independent sample paths of the process and deriving from these the trajectory of the mean firing rate $\nu_i(t)$ and the correlation in the firing activity $C_{ij}(t)$ through the empirical mean and correlations obtained. {In the simulations we focus on BCC model. The same numerical experiments can be performed in the Bressloff case. }

\subsubsection{One-Population Model}
We start by considering the one-population model studied in section \ref{sec:OnePop}. In that case we have shown that both Wilson and Cowan and the infinite-size systems had the same behavior, and that this behavior was regularly unfolded in the finite-size systems. In these cases, two saddle-node bifurcations defined a zone of bistability that depended on the size of the network as shown in Figure~\ref{fig:1popcodim2BC}(d). We simulate the network using Doob-Gillespie algorithm and identify the boundaries of the zones where bistability occurs by continuation of the fixed points obtained as the parameters increase or decrease. The continuation is initialized by running the algorithm with parameters associated to a single attractive fixed point of WC system. Then the parameter $I$ is slowly varied and simulations are run with initial condition being the last values of the network state of the previous simulation. We observe the hysteresis phenomenon and bistability, and the bistability zones closely matches WC system, as shown in Figure~\ref{fig:MarkovOnePop}.
\begin{figure}[!h]
	\centering
	\includegraphics[width=.4\textwidth]{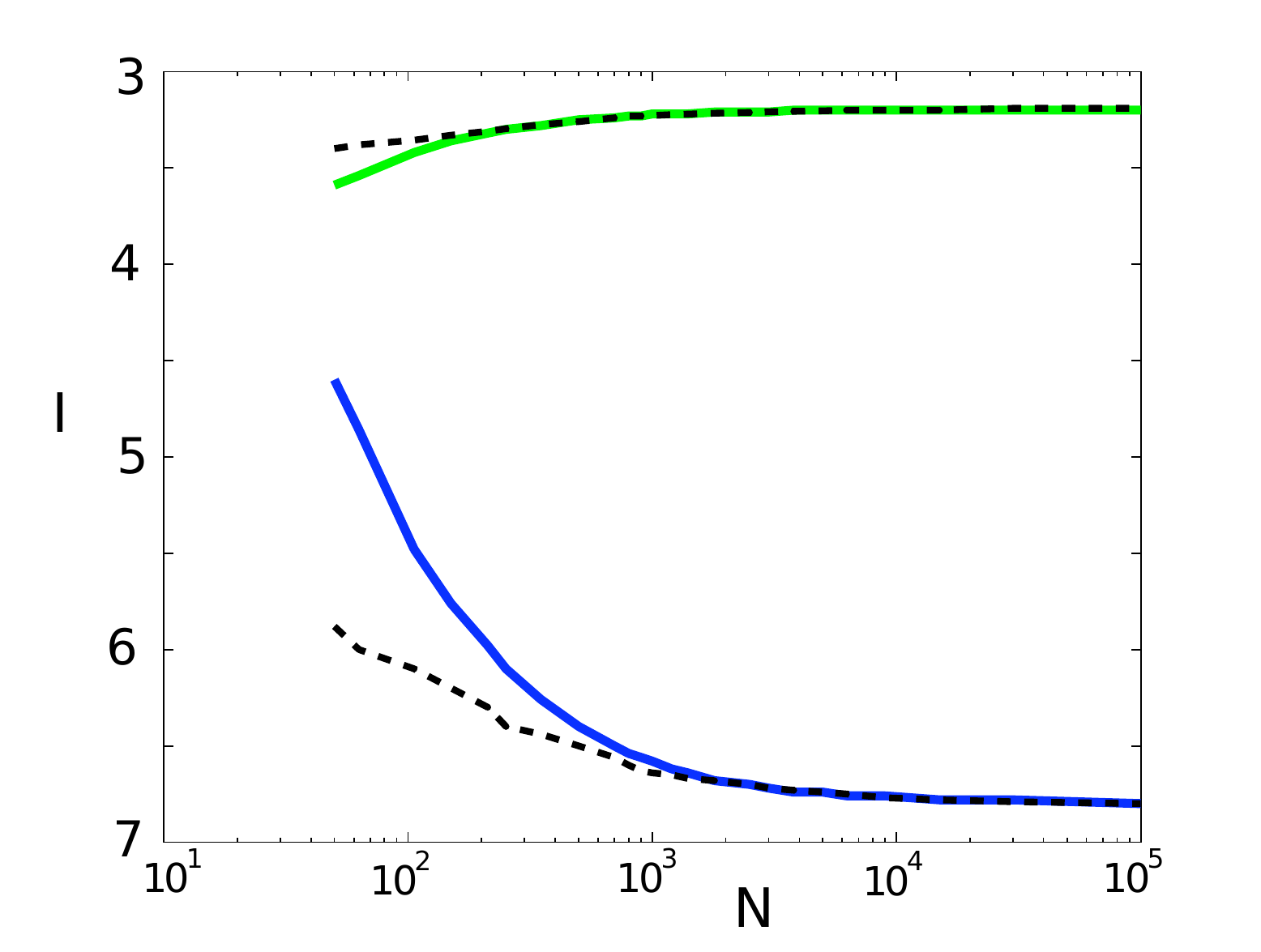}
	\caption{Bistability zones in BCC finite-size model (black dashed curve) and computed from the Markovian {BCC} model (colored plain curve).}
	\label{fig:MarkovOnePop}
\end{figure}
We simulate the network for increasingly large networks ranging from $N=50$ neurons to $N=100\,000$ neurons with $10\,000$ simulated sample path from which we extract the empirical mean and standard deviations. From the curves obtained in that fashion, we automatically identify zones where there is bistability. These boundaries are plotted together with the values of the saddle-node bifurcation at these points arising from the codimension 2 bifurcation diagram of {BCC} finite-size model, and we observe that the curves closely match as soon as the network is larger than $1\,000$ neurons. Therefore in this model, BCC system closely reproduces the behavior of the finite-size Markov model for sufficiently large networks. A similar result is obtained in BCC case (not shown).
 
 \subsubsection{Markovian Model I \review{: Fixed points regimes}}
We now turn to study the two-dimensional network Model I. In that case we showed that depending on the input parameter $I_1$, Wilson and Cowan system can either present a stable cycle or a stable fixed point. We first chose a case where WC system converges towards a fixed point, and to this end will study the system for $I_1=-5$. In that case, both WC system, the infinite-size system, BCC and Bressloff systems converge towards a fixed point. \review{The periodic regimes are addressed in section \ref{sec:CyclesMarkov}.} 
 
\medskip 
In the case where WC system presents a unique hyperbolic attractive fixed point, we showed that the infinite-size presents an exponentially stable fixed point with the same values of mean firing rate and null correlations, which has a counterpart in the unfolding of BCC and Bressloff model. Simulations of the Markovian model illustrate the fact that all the sample paths of the process converge quite fast towards this fixed point with null correlations displaying stochastic variations, and each sample path presents a precise match with the solution of WC solution at this point. The stochastic variations around Wilson and Cowan trajectory arising from the randomness of the Markov chain are averaged, and produce a fixed mean firing-rate corresponding to the fixed point of the infinite-size system, i.e. having null correlations and the same value of mean firing rates. Moreover, we observe that the correlations vanish, which implies that the state obtained is an asynchronous state\review{, as expected from Bressloff's expansion}. 

Figure~\ref{fig:MarkovFP} represents simulations of Model I with $I_1=-5$. 
 \begin{figure}[!h]
 	\centering
 		\subfigure[Different Sample Path]{\includegraphics[width=.45\textwidth]{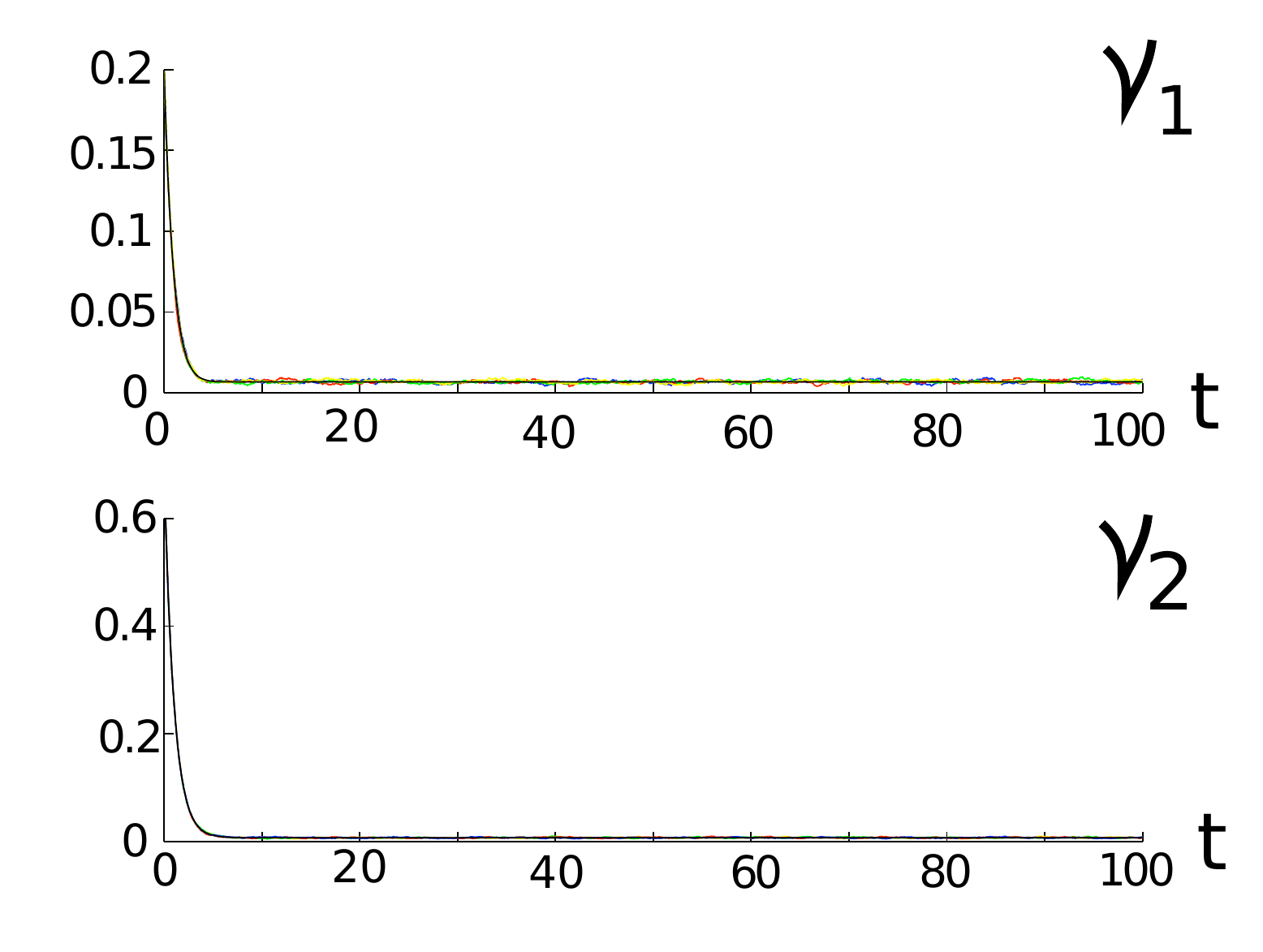}}\qquad
 		\subfigure[Stationary regime (zoomed)]{\includegraphics[width=.45\textwidth]{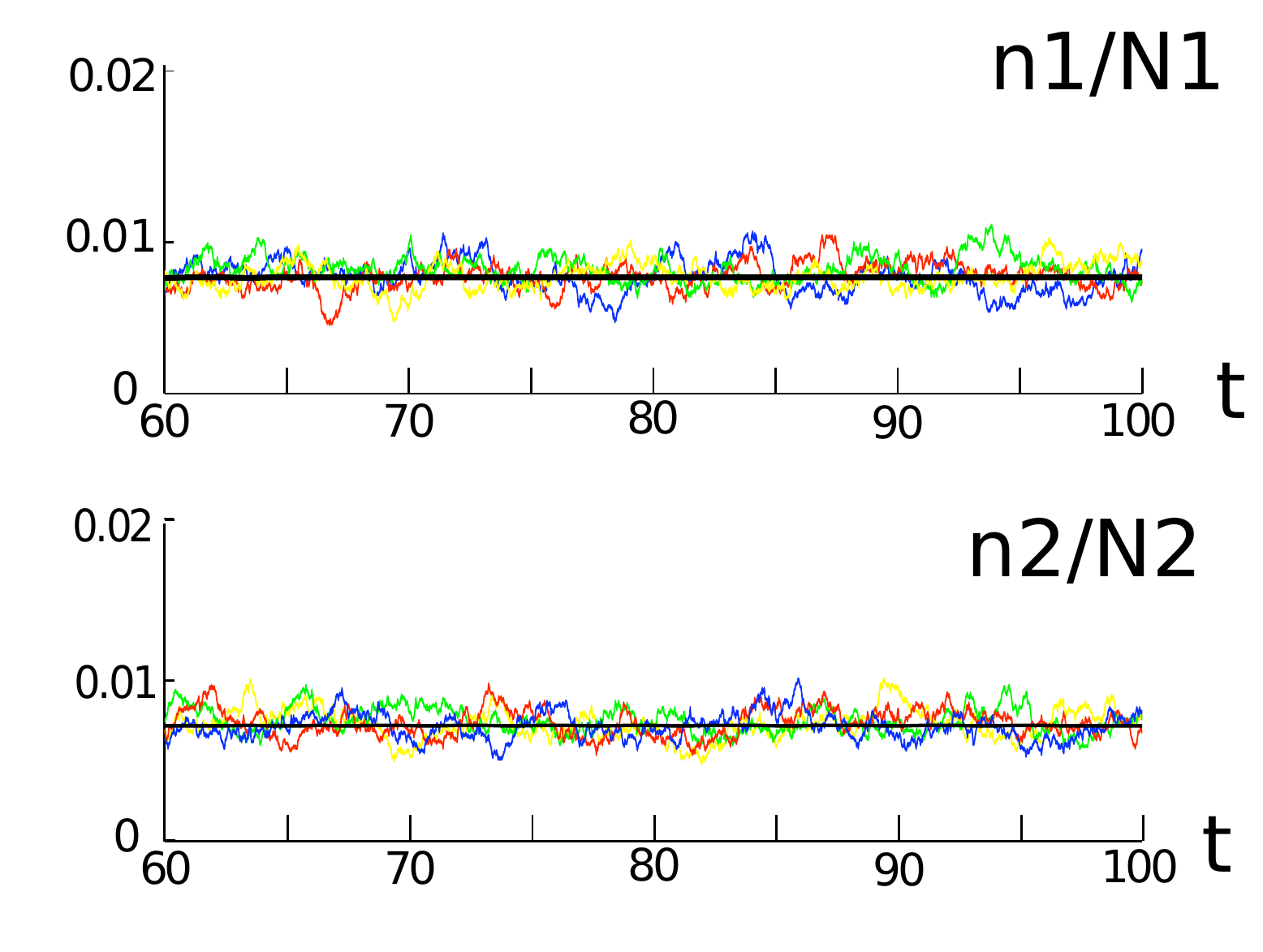}}\\
 		\subfigure[Statistics]{\includegraphics[width=.45\textwidth]{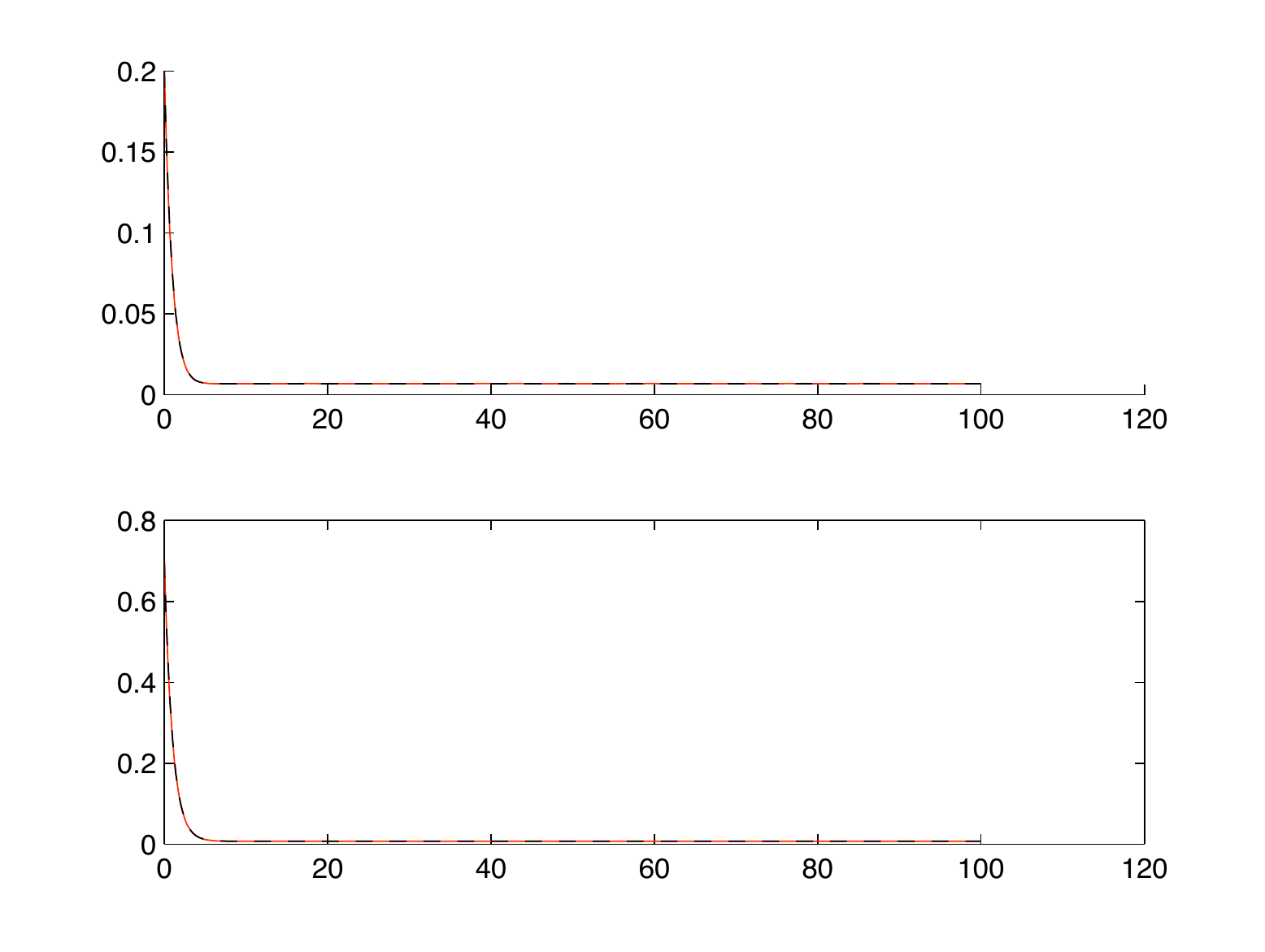}}
 		\subfigure[Statistics]{\includegraphics[width=.45\textwidth]{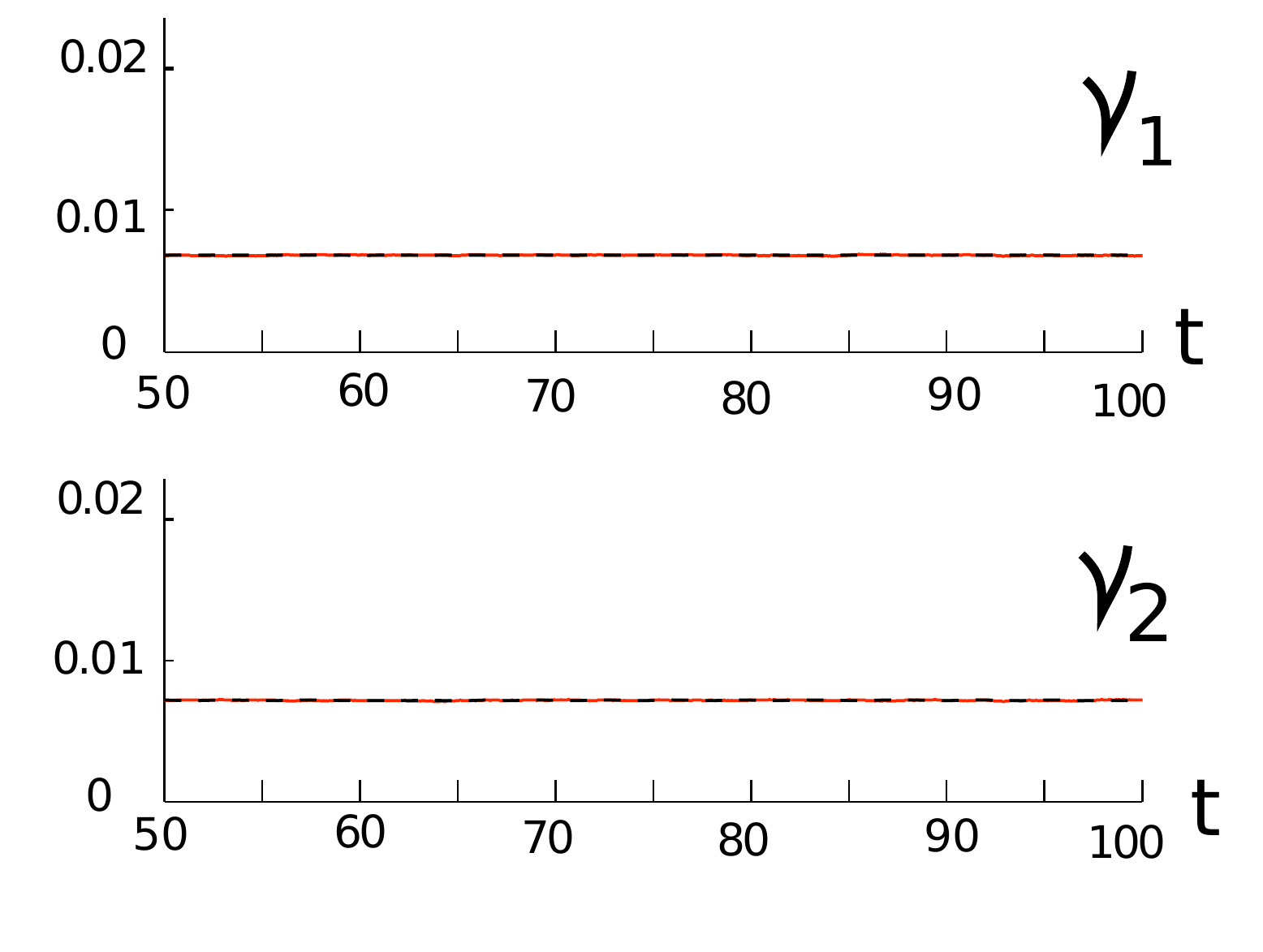}}
 	\caption{Network Model I, for parameters associated with a stable fixed point ($I_1=-5$) closely matches WC trajectory with random variations around the fixed point that average at the value of the fixed point, and correlations vanish (asynchronous state). (a),(b): The different colors correspond to different sample paths. {The simulations were performed in the BCC case}. }
 	\label{fig:MarkovFP}
 \end{figure}
 In panels (a) and (b) are plotted four sample paths of the process for $10\,000$ neurons simulated over on a time interval equal to $100$, and panels (c) and (d) the mean-firing rates averaged over $5\,000$ realization of the process, and thats shows a very close agreement with the solution of WC system. The maximal absolute value of the correlations (not plotted) are of the order of $10^{-5}$ and are neglected as an effect of the finiteness of the sample path simulated for computing the mean and covariance. The same observations hold for all the simulations performed for parameters corresponding to an hyperbolic fixed point of the infinite system. Note that the difference between mean-firing rates of the solution of WC system on one hand and of BCC system on the other hand is of the order of $10^{-5}$ at this network scale, and therefore both models fairly approximate the stochastic Markovian model.

\subsubsection{Markovian Model II}
We perform the same analysis in the case of the network Model II studied in section \ref{sec:second2Pop} with symmetric connectivity with null diagonal and inhibitory interaction in BCC case. In that case, we showed that WC system presented bistability, and the infinite-size system presented a more complex behavior with multistability and different limit cycles. In the simulations of the Markovian system, similarly to the case of Model I, we observe that when WC system presents a single hyperbolic fixed point, the Markovian system closely matches with the solution of WC system during the transient phase, and the stationary state randomly varies around the fixed point of WC system. The mean of the firing rates over many realization of the process converges exactly towards the solution of WC system, and the correlations tend to zero, implying the system is in an asynchronous state and that its behavior is efficiently summarized by Wilson and Cowan system. We note that in that case, the infinite-size system and the finite size systems both presented additional fixed points with non-zero correlations, that do not actually exist as a regime of the Markov chain.
 
 For parameters associated with bistability in WC, the situation is similar, and we observe in our simulations that the WC system captures precisely the behavior of the Markovian system and all additional solutions derived from the moment hierarchy truncation do not appear. Simulations of the Markovian system show that depending on the initial condition, the system will converge towards one or the other fixed point. Albeit the stochasticity of the Markov chain yielding the possibility to switch between the two fixed points, this phenomenon does not occurs often, and except for input parameters very close to the saddle-node bifurcations, no switching between fixed points is observed. In that case, we note that the additional fixed point with non-zero correlations and the additional cycles evidenced in the infinite-size system and that are present in the finite-size moment equations do not appear in the simulations of the underlying Markovian model. We present in Figure~ \ref{fig:network2} the values of taken by the mean firing rate and correlations for $1000$ realizations of the Markov chain over the $20$ last units of time for a simulation of $20\,000$ neurons as a function of the input paramter $I_1$. We treat the cases $I_2=0$ and $I_2=5$, the blue crosses are the continuation of the fixed points with $I_1$ increasing (from $-10$ to $10$ for $I_2=0$ and from $-5$ to $15$ for $I_2=5$) and red circles for $I_1$ decreasing. The continuation is initialized by running the algorithm with parameters associated to a single attractive fixed point of WC system. Then the parameter $I_1$ is slowly varied and simulations are run with initial condition being the last values of the network state of the previous simulation. We observe the hysteresis phenomenon and bistability, and the bifurcation diagram obtained perfectly matches WC's, and no counterpart of any of the identified correlations effects are observed. 
 
 \begin{figure}[!h]
 	\centering
 		\includegraphics[width=.45\textwidth]{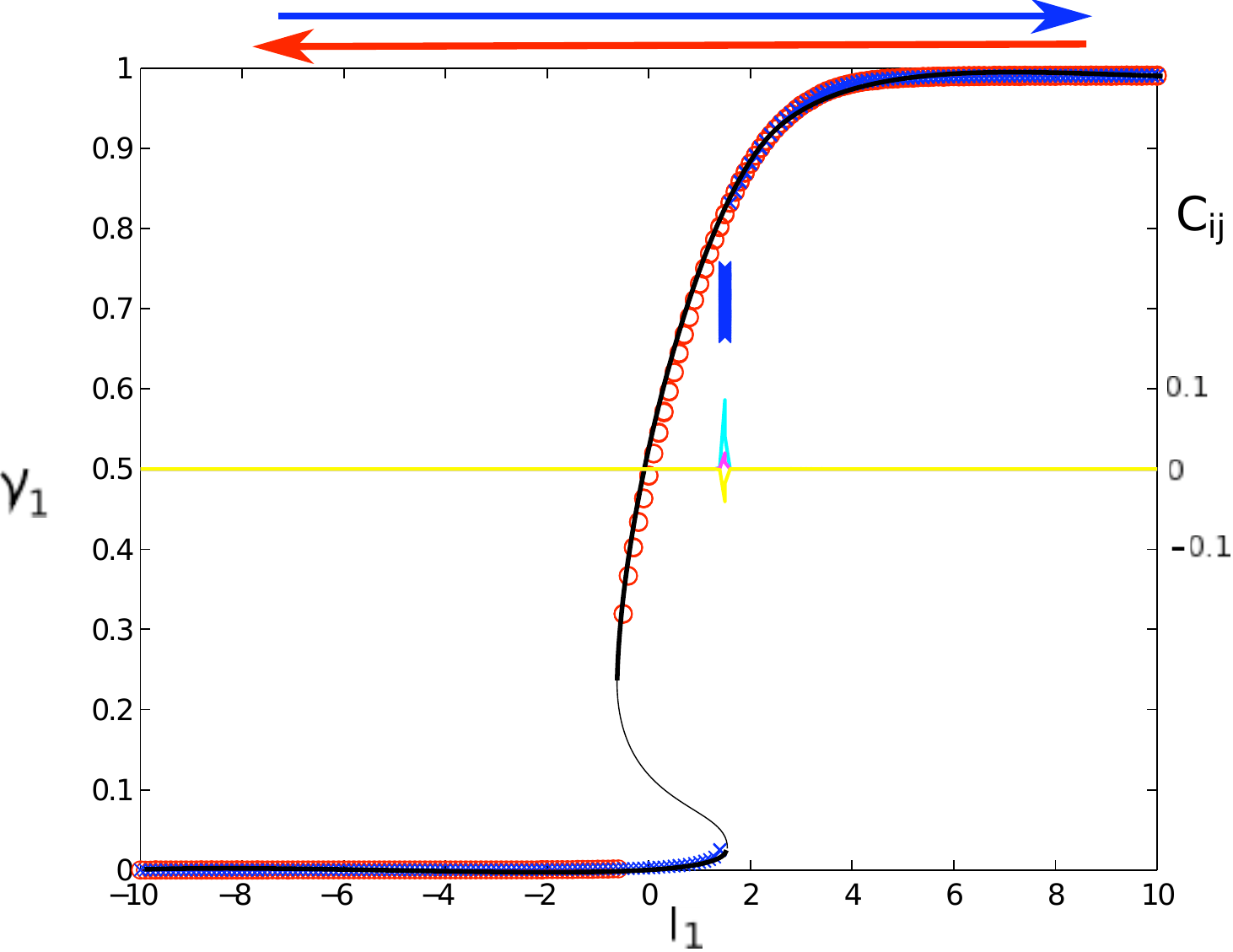}\qquad
 		\includegraphics[width=.45\textwidth]{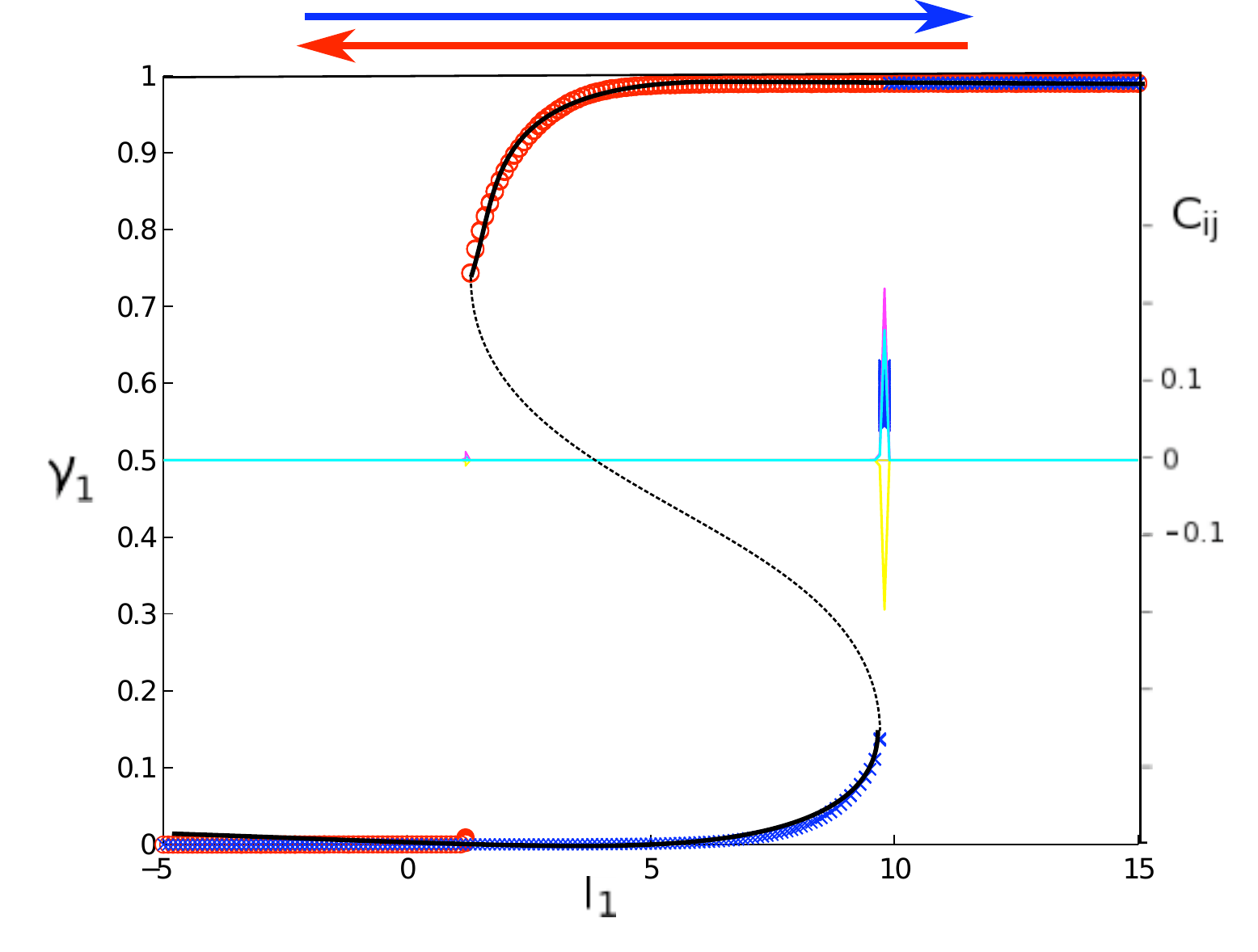}
 	\caption{Stationary values of the Markov chain in Model II with $I_2=0$ as a function of $I_1$ (see text). Continuation of equilibria by increasing $I_1$ (blue crosses) and by decreasing $I_1$ (red circles), superimposed with WC bifurcation diagram (black curve). Correlations are plotted with a different axis origin in cyan ($C_{11}$), magenta ($C_{22}$) and yellow ($C_{12}$), keep very close to zero except at the precise value of the bifurcations, where the jump between the two fixed points produces a spurious standard deviation. BCC case.}
 	\label{fig:network2}
 \end{figure}
 
\review{It appears from this analysis that the finite-size moment equations present additional solutions, associated with large correlations, that do not exist in the initial Markovian model. We emphasize the fact that though the infinite-size model is not equivalent to the moment equations derived by both authors, the solutions evidenced have a counterpart in the finite-size moment equations corresponding to both systems. These equations present solutions with large correlations (that actually diverge as the number of neurons tend to infinity), which are not solutions of the initial Markov chain, which naturally behaves as WC system for large population sizes, as expected from the moment equations by both authors. } 

\review{A complementary approach to the study of bistability in the Markov framework allowing to quantify the probabilistic intensity of noise-induced transitions can be driven through the use of Wentzel–Kramers–Brillouin (WKB) approximation (see e.g. \cite{dykman-mori-etal:94,bressloff:10} and references therein) performed directly on the Markovian system. This method was used by Bressloff in~\cite{bressloff:10} in the same context for a one-population model, and more generally allows studying noise-induced transitions between multiple metastable states where the mean-field theory based on the system-size or loop expansions break down. It allows reducing the problem to a Hamilton-Jacobi equation from which one can derive the escape rate, namely the probability intensity that the solution escapes from the attraction bassin of a given equilibrium of the deterministic equation (the WC system) and enters the attraction bassin of the other equilibrium. }

\subsubsection{\review{Periodic regimes}}\label{sec:CyclesMarkov}

\review{The models proposed in~\cite{buice-cowan:07,buice-cowan:10,bressloff:09} emphasize the role of the mean-firing rate and of the zero timelag correlation functions. As illustrated in the previous section, these variables are suitable to model the system at fixed points. However, it remains unclear how these variables characterize the behavior of the Markovian system in periodic or chaotic regimes. }

\review{The analytical study of such regimes can be carried out in a satisfactory fashion either in the theory of master equations and in the theory of stochastic dynamical systems, using the Langevin equation. Such studies have been addressed by various authors within the context of chemical master equations~\cite{mckane-nagy-etal:07,bolland-galla-etal:08,gaspard:02} and recently Bressloff applied the same tools to neural models, using Langevin equations and Floquet theory~\cite{bressloff:10}. Indeed, it appears that in regimes where the Wilson and Cowan system present a cycle, the Markovian model is characterized by random fluctuations in the neighborhood of this attractor, and the system loses phase information over time. These dynamics can be suitably studied in terms of a diffusive phase variable. In details, under the Langevin approximation of the dynamics, a possible way to study the behavior of the solutions of the Markovian system would be to use the stochastic phase reduction (see \cite{teramae-nakao-etal:09}).}

\review{In this section we do not make use of these more customary methods but rather investigate the relationship between behaviors of the finite-size moment equations and behaviors of the Markovian system. For our choice of parameters in Model I, let us consider the parameter region $I_1 \in [I_{1,H},I_{1,LPa}]$ where $I_{1,H}$ (resp. $I_{1,LPa}$) correspond to the value of $I_1$ at the point H (resp. LPa). In this region, the solutions of WC system are periodic orbits. The question we address is whether the behavior of the finite-size (moment equations and Markovian) systems is comparable to the behavior of WC solutions or to the behavior of the finite-size moment equations, which differs at any finite scale from WC's behavior as shown in section~\ref{sec:TwoPopsFinite}. Indeed, we observed in figure Fig.~\ref{fig:Codim2BC2Pops} that the Hopf bifurcation of the deterministic WC system, which is a degenerate point of the moment equations, expands in the codimension two diagram into two branches of generic Hopf bifurcations, that imply the presence of cycles in the finite-size system. These Hopf bifurcations either unfold a singularity corresponding to the actual eigenvalues $\pm \lambda$ of WC Hopf bifurcation or to twice this value by application of the lemma \ref{thm:WilsonBuiceCowan}, and can be either stable or unstable. Therefore the boundaries of the region where the system is in a periodic regime are modified as soon as the system size is finite. The same phenomenon appears close to LPa, in which case the diagram unfolds into a Hopf bifurcation, a saddle node bifurcation, a fold of limit cycles and a saddle homoclinic bifurcation. We will observe that in both cases, the behavior of the Markovian system actually reflects the properties of the finite-size moment equations, and that quasicycles will appear in the neighborhood of WC's Hopf bifurcation.}

\review{Let us first have a closer look to the behavior of the moment equations in the neighborhood of $I_1=I_{1,H}$ in the finite-size system. WC's Hopf bifurcation gives rise to two Hopf bifurcations whose codimension two curves are tangent to the axis $n=0$. The curve H1 is related to a singular eigenvalue converging as $N$ goes to infinity to the value of WC Hopf bifurcation, and gives rise to stable limit cycles with period close to the expected cycle in WC system. The other branch, H3, corresponds to faster cycles with period around half WC cycles' period. Let us now fix a certain size of the network $N$. In the moment equations, and let $I_1$ be a free bifurcation parameter. As $I_1$ increases, cycles will appear corresponding to the Hopf bifurcation H1 at a value $I_{H1}(N)$. We observe that this value is increasing as the network size increases. Therefore, in regions where WC system has a stable focus, the finite-size moment equations show oscillations of frequency close to WC cycles' frequency. For instance for $n=0.02$ corresponding to the bifurcation diagram of figure Fig.~\ref{fig:BressBifs02}, we have $I_{H1}(50)=-3.37$, defining a region of quasicycles $I_1\in[-3.37,-3.245]$. This size of the zone where quasicycle appear decreases very fast (as we can see from the shape of the graph of H1 in the codimension two plot Fig.~\ref{fig:Codim2BC2Pops} and for $N=1000$ we have $I_{H1}(50)=-3.248$. It is important to note that at a size $N=50$, oscillations are very difficult to see because of the small system size. Moreover, in the particular case treated, the graph of the H1 bifurcation is almost a vertical line: there is almost no difference between the value of H1 across different system sizes. The same phenomenon is observed in the simulations: quasicycles close to the Hopf bifurcation point barely exist, and the value of $I_1$ needs to be very close from $I_{1,H}$ to start observing these quasicycles.}

\review{WC's family of limit cycle lose stability through a saddle-node homoclinic bifurcation. We observe that unfolding this bifurcation, we find out that the branch of limit cycles in the finite-size system undergo a fold of limit cycles corresponding to a decreasing value of $I_1$, having the effect of reducing the amplitude of the interval of $I_1$ values related to a periodic orbit. The curve of fold of limit cycles is no longer vertical in that case and a wide non-trivial region distinguishes the finite-size and WC's periodic orbit parameter regions. We observe again in that case that the LPC2 line actually corresponds to a good estimate of the disappearance of cycles. Indeed, the codimension 2 bifurcation diagram shows that for $I_1=0$ oscillations should disappear at a system size close to 550 neurons. In figure Fig.~\ref{fig:spectrum} we observe that indeed, for $N=500$ neurons and $I_1=0$, the system does not show any oscillations, and for $N=700$ neurons the system starts showing an oscillatory behavior. Moreover, in that case we observe that the amplitude of the cycles is not small, consistent with the cycles we observe in the finite-size system. }

\review{The presence of a chaotic orbit as seen in Fig.~\ref{fig:PoincareChaos} and the fact that indeed in such regions the behavior of the system is aperiodic suggests that in these regions close to a limit cycle of Wilson and Cowan system, the underlying Markovian system can present a low-dimensional chaotic macroscopic behavior arising through the interaction of a large but finite number of neurons, a phenomenon that could be related to the work of El Boustani and Destexhe in~\cite{elboustani-destexhe:10b}, where intrinsically stochastic neurons are shown to yield low-dimensional chaotic macroscopic behaviors.} \review{Results are presented in figure Fig.~\ref{fig:spectrum}} 

\begin{figure}[htbp]
	\centering
		\subfigure[$i_1=-3.25$, $N=1000$: Power spectrum]{\includegraphics[width=.45\textwidth]{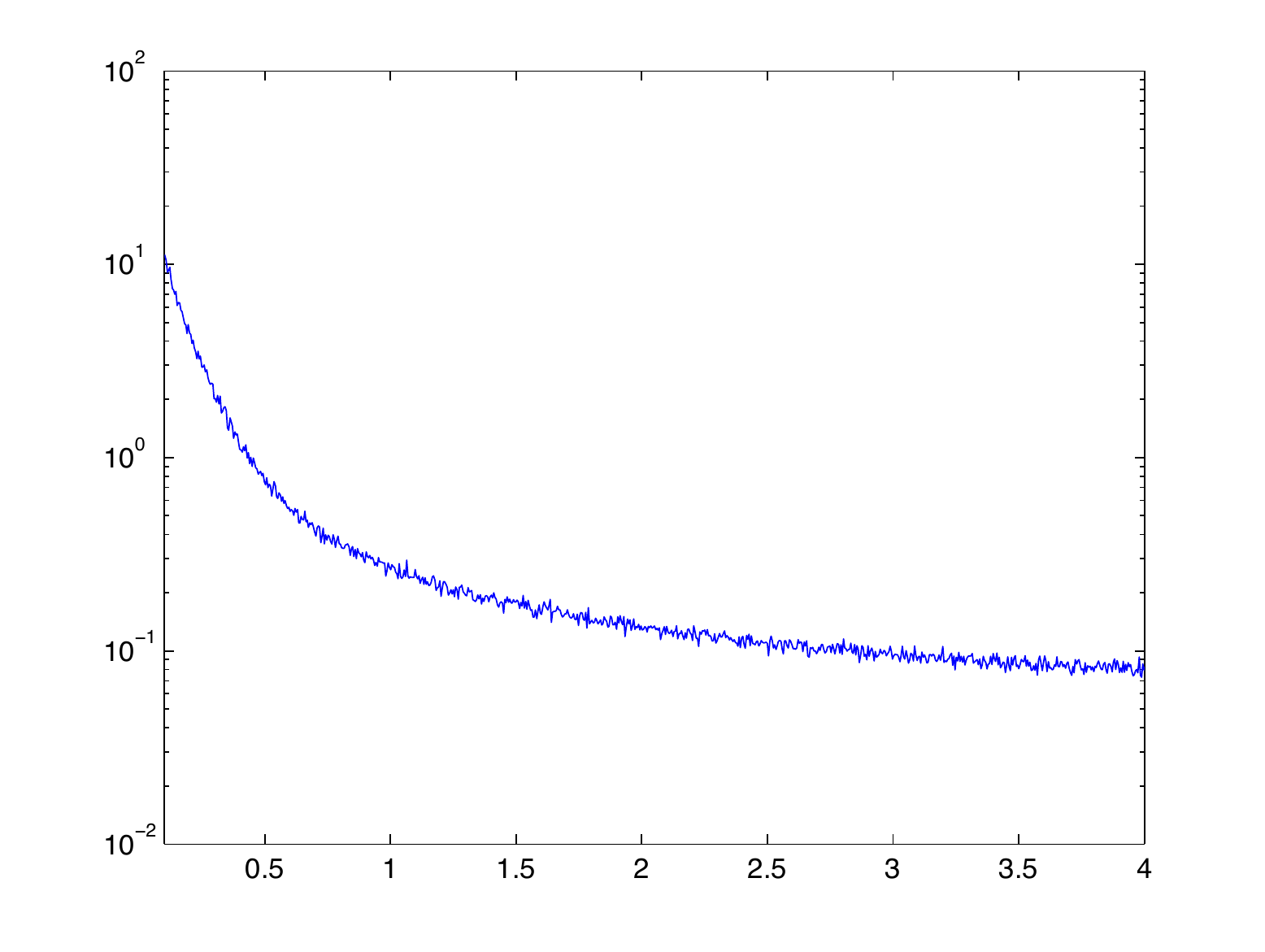}}\qquad
		\subfigure[$i_1=-3.246$, $N=1000$: Power spectrum]{\includegraphics[width=.45\textwidth]{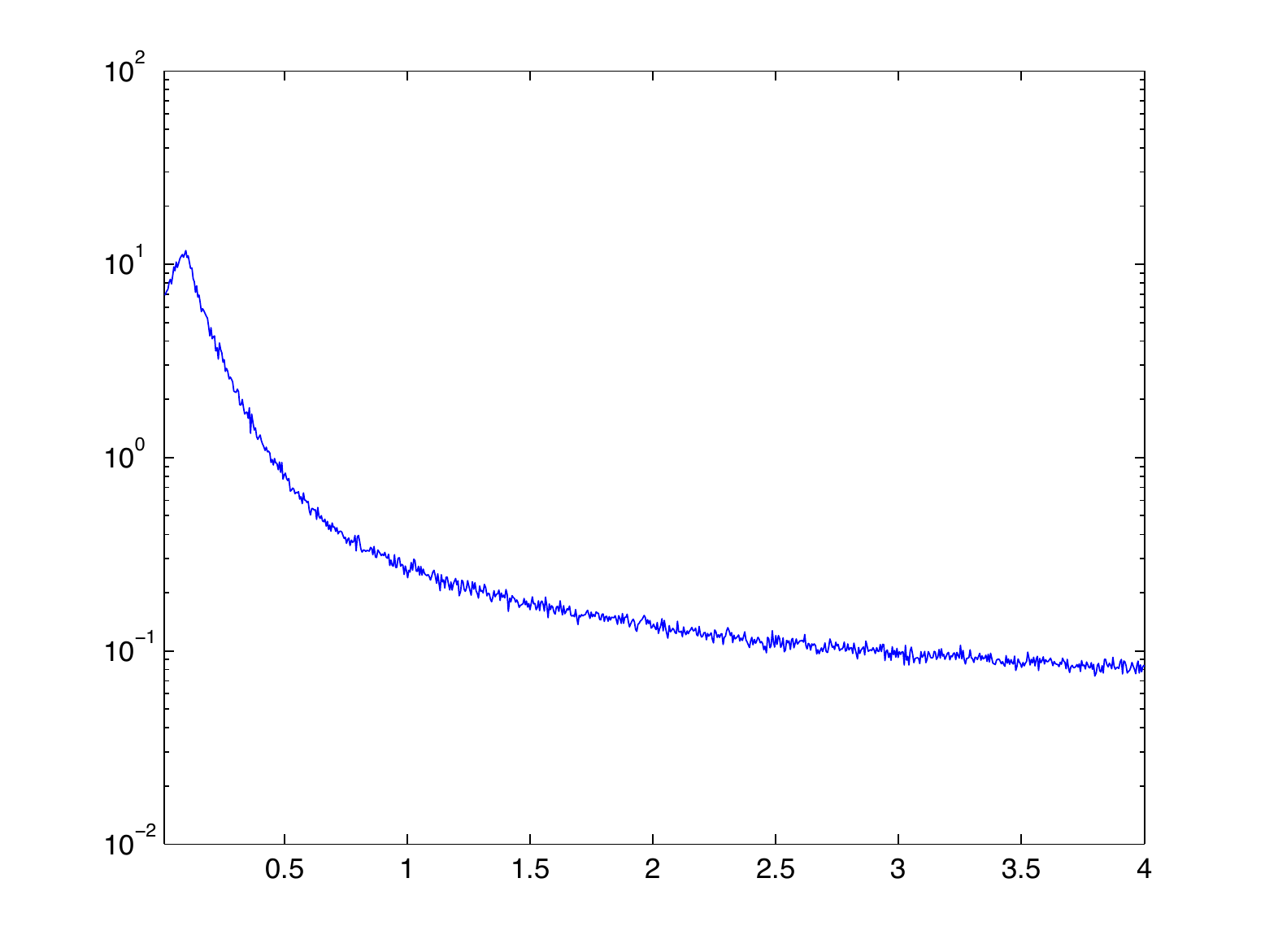}}\\
		\subfigure[$i_1=0$, $N=500$: Power spectrum]{\includegraphics[width=.45\textwidth]{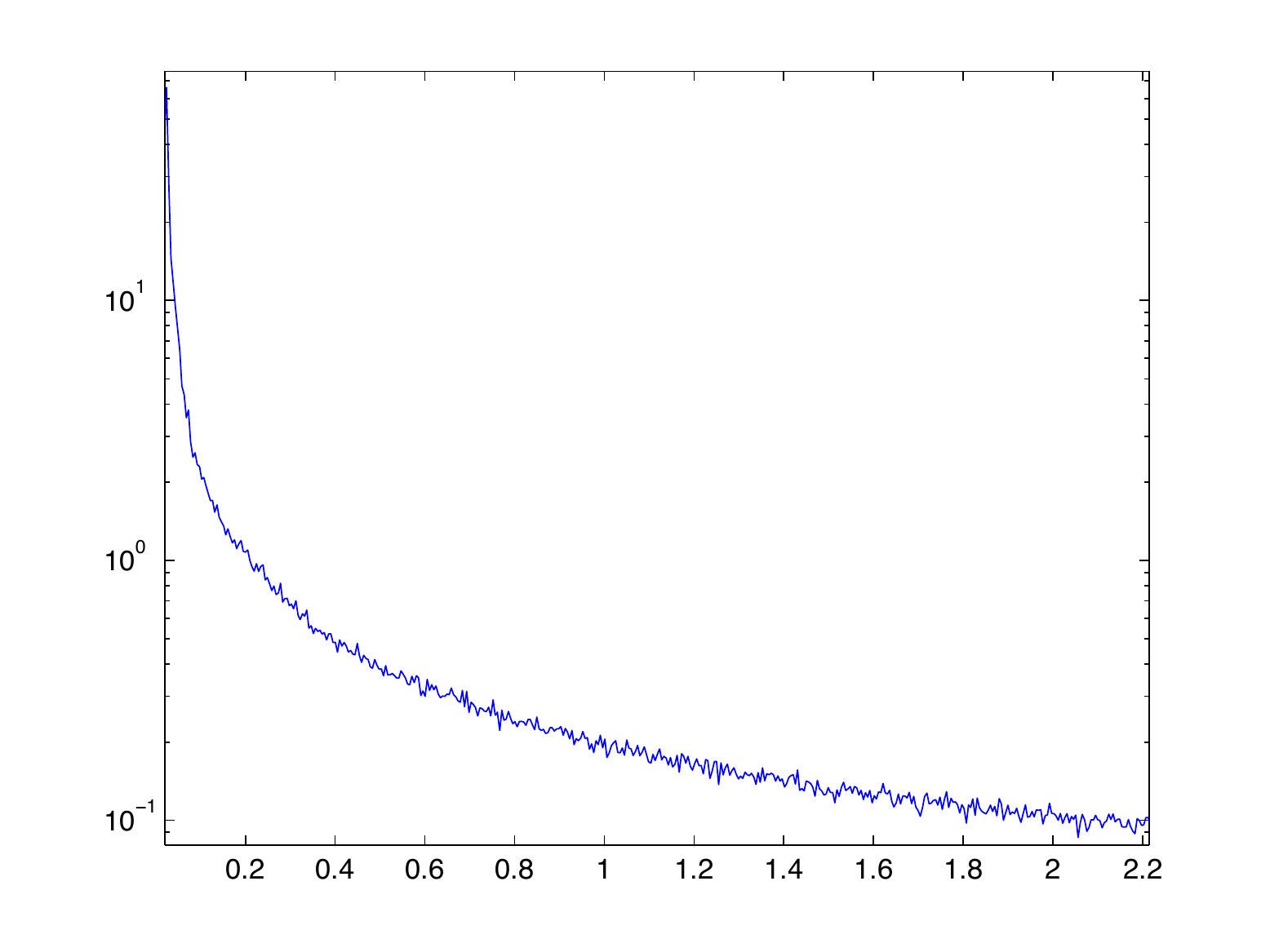}}\qquad
		\subfigure[$i_1=0$, $N=1000$: Power spectrum]{\includegraphics[width=.45\textwidth]{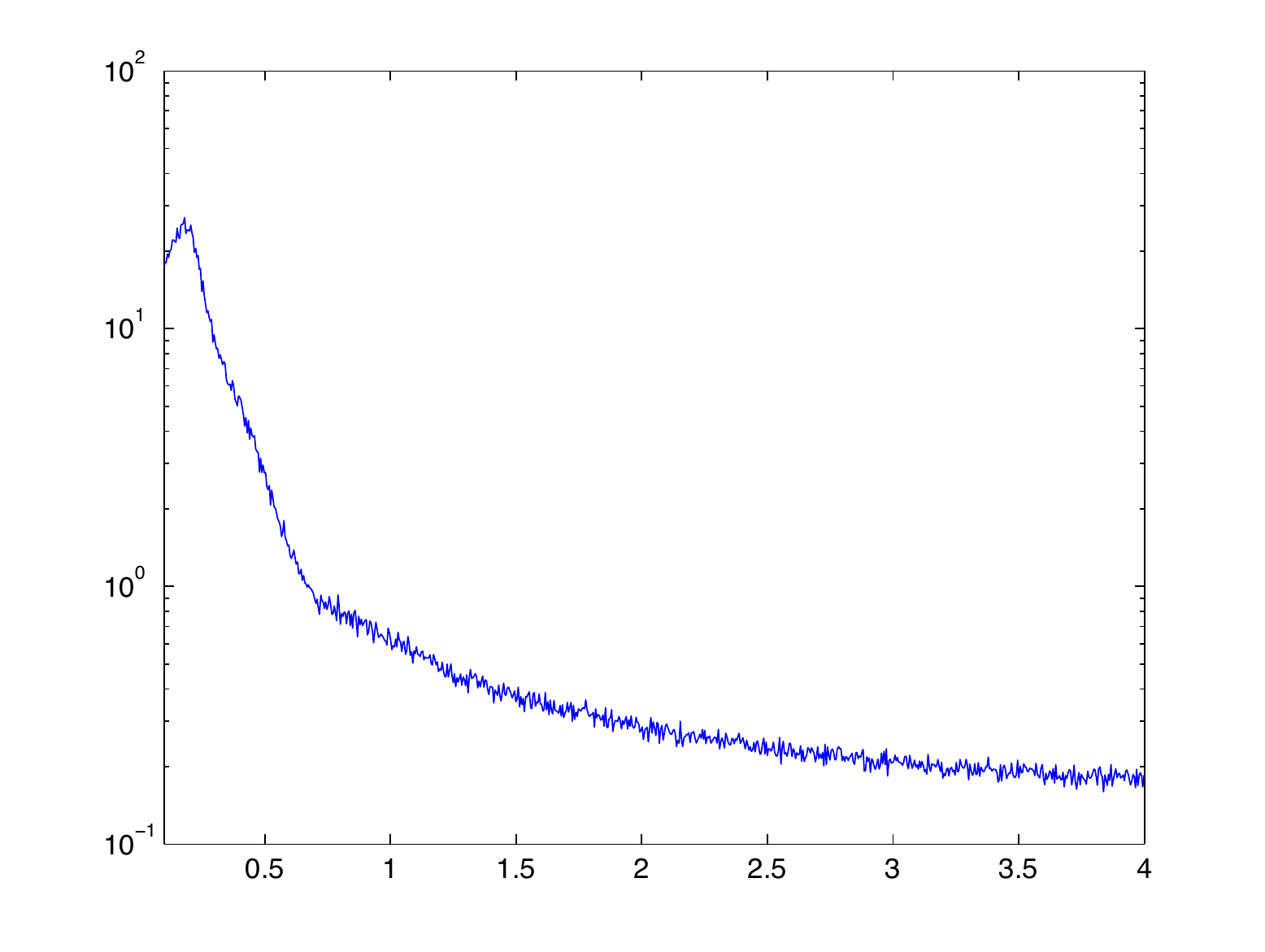}}\\
		\subfigure[$i_1=-3.246$, $N=1000$: Trajectory]{\includegraphics[width=.45\textwidth]{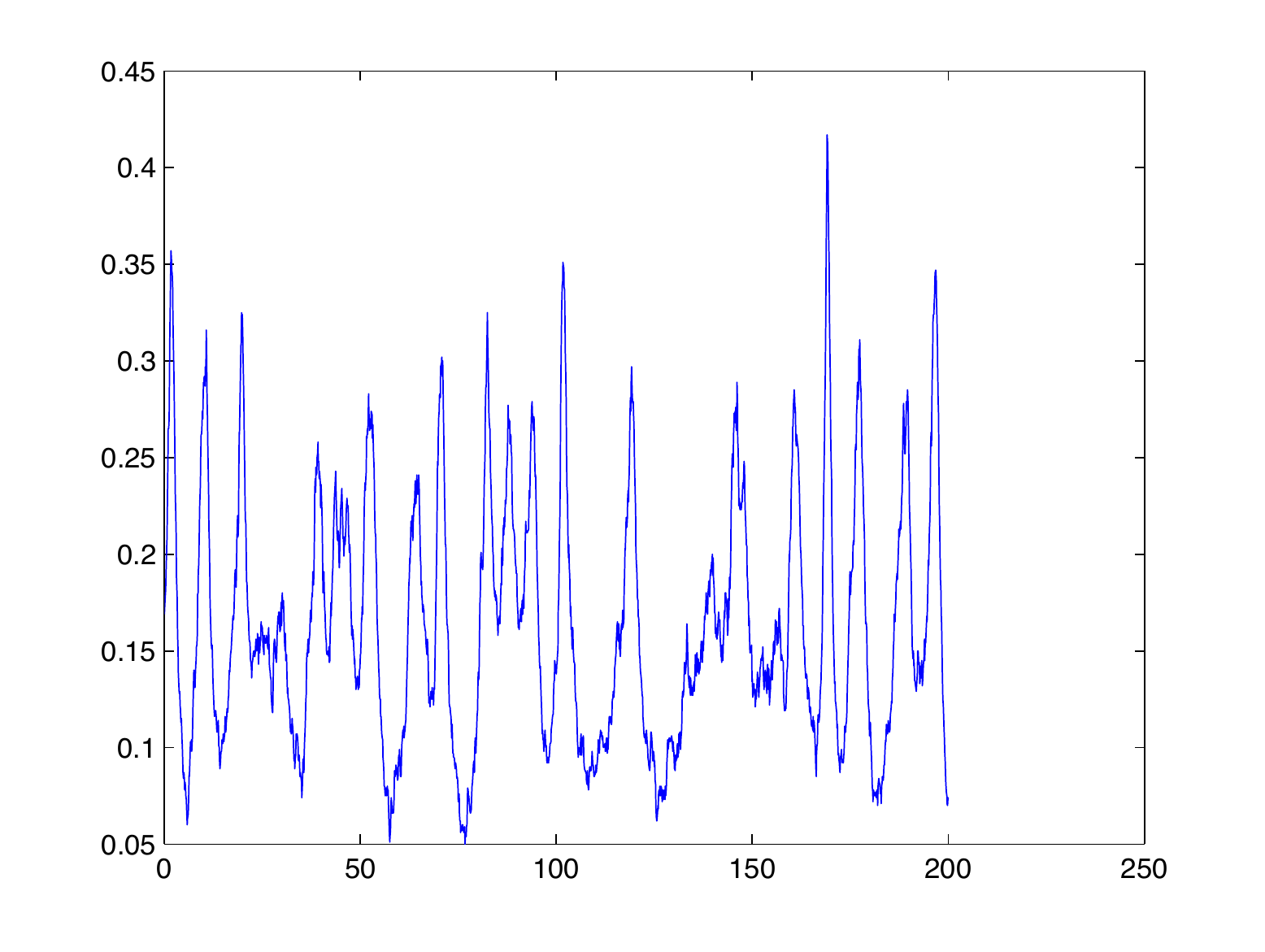}}\qquad
		\subfigure[$i_1=-3.244$: WC orbit]{\includegraphics[width=.45\textwidth]{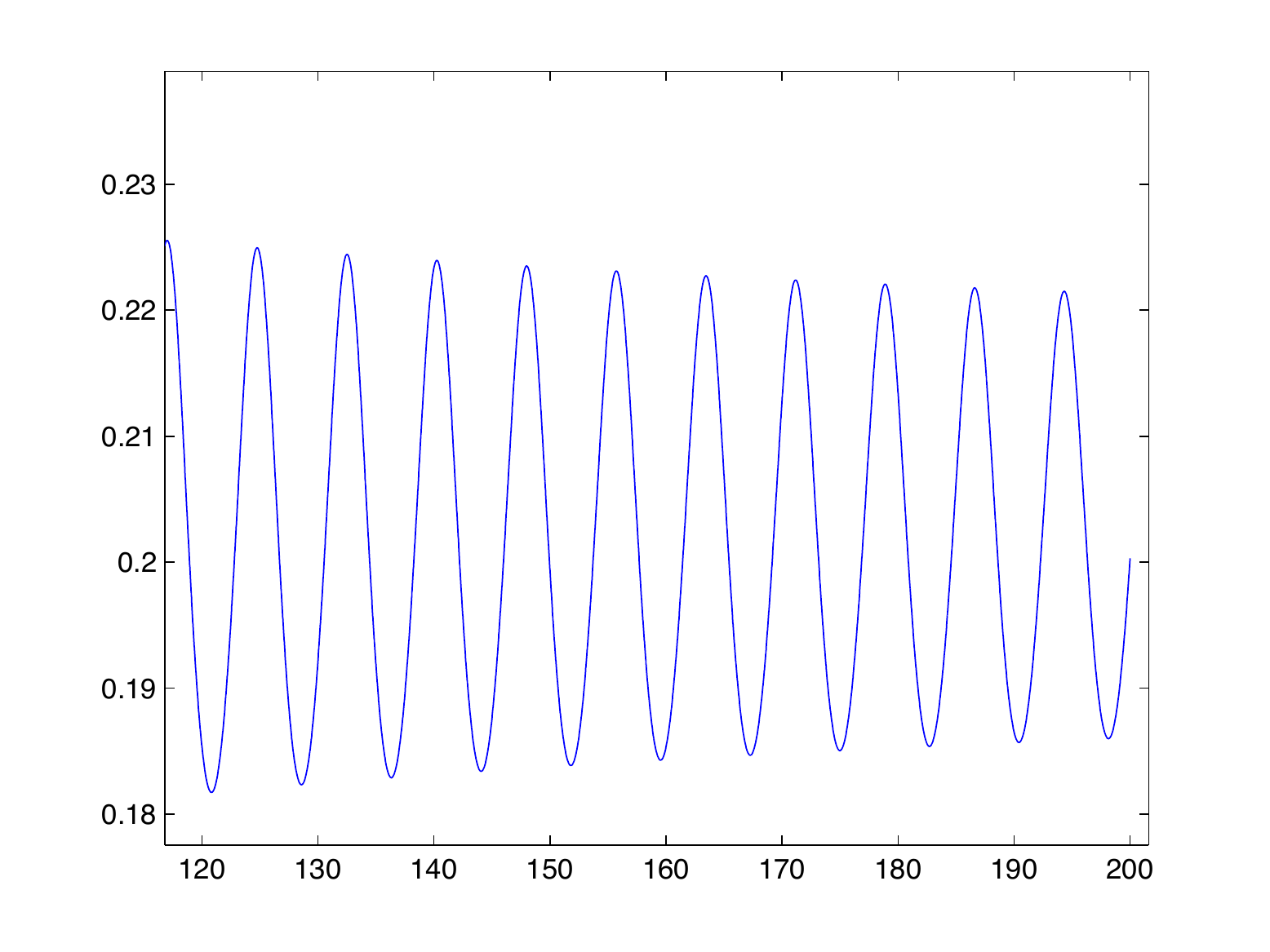}}
		\caption{Simulations of the Markovian system and periodic obits. Simulations are performed using Doob-Gillespie algorithm in order to simulate a large number of sample paths of the Markovian system, and the power spectrum presented are averaged across the realizations. For $I_2=-5$, orbits of the WC system exist for $I_1\in[-3.245,0.54]$, and around this zone, depending on the system size, cycles might exist or not (see text).}
	\label{fig:spectrum}
\end{figure}

\review{This particularly good correspondence between cycles of the moment equations and quasiperiodic orbits of the Markov chain is a very promising result. However, it is important to note that the additional cycles corresponding to the branches H2 and H4 do not appear in the Markov simulations. This might be linked with the stability of such orbits and the system size at which stable periodic orbit exist. This study therefore indicates that further exploration is needed to assess more clearly a definitive relationship between quasicycles and actual orbits of the moment equations, and there is a good chance that these two phenomena are related. This precise study is not within the scope of the present paper and we are currently actively working on this topic. Of high interest also is the fact that in some regions corresponding to the vicinity of a Hopf bifurcation, the moment equations present a low-dimensional chaotic behavior. Such phenomena, of interest in neuroscience, have, to the best of our knowledge, not been shown, and we believe that if these phenomena indeed appear in the Markovian system, this is of crucial interest in neuroscience and in the understanding of large-size stochastic systems.}

 \section{Discussion}

In this paper, we studied the different solutions of newly developed models for describing the mean-field limit of Markovian neural networks modeling large-scale cortical areas: BCC and Bressloff models\cite{buice-cowan:10,bressloff:09}. These equations {were originally} derived from a Markovian implementation of the firing of each neurons in the population, whose transition probability satisfied a differential equation, the master equation of the process\cite{elboustani-destexhe:09}. Both models studied drew on this framework {but, since they addressed different neuronal regimes, dealt with different variables and different scaling}, and after truncation of the moment hierarchy yielded sets two coupled equations, one governing the mean firing activity and the other governing the same-time correlations of the firing activity in the network (or the ordered cumulant that can be deduced from the correlation and the mean firing rate). The particular aim of both models was to recover, in the limit where the number of neurons is infinite, the standard WC equations. 
 
We {transformed} these models {in order to study the proportion of active neurons in each populations, in} networks composed of different populations of neurons with different dynamics and composed of different number of neurons. Both models include finite-size corrections, and the size of the network appears simply in the equations as a parameter of the model. The limit when the number of neurons tend to infinity appears in that view as a regular perturbation of the finite-size system {in our particular choice of variables, and that} both the BCC- and Bressloff-{derived} models converged towards the same equations as the number of neurons tends to infinity, the infinite-size system. 
 
We analyzed the solutions of the infinite-size system, and established the fact that all solutions of WC system were associated with solutions of the infinite-size system with null correlations. However, we showed that the stability of the solutions obtained in that fashion did not necessarily have the same stability properties as in WC system: fixed points of the Wilson and Cowan system correspond to fixed points of the infinite-size system with zero correlations having the same stability, but all the cycles of the WC system are either unstable or neutrally stable in the infinite-size system. This destabilization of cycles gives rise to new behaviors (fixed points or periodic orbits) with non-zero correlations, the correlation-induced behaviors.
 
 When studying the finite-size equations and the way it relates to the properties of the infinite-size system, we presented evidence that the mean-field limit appears to be a singular limit, in the sense that for any finite-size of the network, the system presents behaviors the infinite-size system does not feature. Moreover, singular points appear in the limit of infinitely many neurons, and for infinitely many neurons, branches of bifurcations collapse and disappear. Besides these qualitative distinctions between finite and infinite-size systems, we numerically found mesoscopic-scale effects, namely behaviors that are only present at a certain population scale, and disappear for smaller or larger populations. The scale at which these behaviors took place coincided with the scale of typical cortical columns. Such behaviors are neither captured by the study of small-networks nor by the mean-field limit, stressing the importance of precise descriptions of these intermediate scales in tractable approaches. {We want to emphasize that the system-size expansion and the truncations are not valid at the bifurcation points. However, the bifurcation analysis we performed allowed us to uncover the number and stability of fixed points and limit cycles corresponding to different parameters. The actual transition of the stochastic process is a very complex phenomenon which would be worth studying but that present very important technical intricacies. We also note that the underlying Markovian models are stochastic models. If the moment equations present multiple attractors, the stochastic system will randomly switch between the different attraction basins. A large-deviation study in the case where there is a small parameter (Bressloff case and BCC case in the fully connected system treated in the manuscript) can allow one to identify the attractor switching and the mean exit time of each attraction basin in order to more precisely study the behavior of the underlying stochastic system\review{, that could possibly be performed using Freidlin and Wentzell theory in the large $N$ case~\cite{freidlin-wentzell:98}}.} \review{As already mentioned, Paul Bressloff opened the way to such studies by using WKB approximations allowing to compute the transition rates between metastable attractors in a one-population model~\cite{bressloff:10}. Alternative approaches could make use of the stochastic dynamical systems theory~\cite{arnold:95,horsthemke-lefever:06}. These approaches would allow directly studying the bifurcations of the initial stochastic system in the Langevin approximation. }
 
 Correlation-induced and finite-size behaviors were then compared to the initial Markovian model. We observed that when WC system presented an attractive fixed-point behavior, each sample path of the Markov chain presented stochastic variations around these fixed points, the correlations tended towards zero and the mean coincided with WC fixed point. In that case, the network regime is therefore an asynchronous state, and is well approximated by Wilson and Cowan system. \review{The validity of the moment equations close to bifurcations was shown in the derivations to be mainly valid far away from bifurcations. However, we showed that the codimension two bifurcations of the moment equations was closely matched with similar phenomena (e.g. bistability) observed in simulations of the Markovian system}. \review{However, we also observed that both BCC and Bressloff models can present additional behaviors in these regimes that are not present in the simulations of the Markov chain. The moment expansion breaks down when considering periodic orbits of the WC system, corresponding to quasi-periodic stochastic solutions of the stochastic system, and different techniques such as the stochastic phase reduction, Langevin approximation and Floquet theory are very promising. However, we provided numerical evidences showing that actual cycles in the moment equations could be related to the phenomenon of quasicycles in the Markovian system. These numerical observations strongly suggest further exploration along these lines.  }

The precise study of all these points will further allow understanding the behavior of large scale systems, and will particularly be interesting in the study of rhythms in the brain, and possible synchronization or desynchronization as a function of the network size and of the connectivity. A particularly interesting point of the models studied in this paper is that they describe a stochastic process by a set of differential equations, where the size of the network appears as a perturbation parameter of a set of ordinary differential equations. This approach can be promising in order to study bifurcations of a stochastic processes and qualitatively analyzing its behaviors and the dependency of its solutions as a function of the parameters. We also noted that the behavior of each sample path of the process was possibly very different from the behavior of its first moments, which raises the question of the best variable to describe such stochastic processes. 
 
 The mathematical method developed to study the stability of the correlation equations, involving Kronecker products and sum and the application of Floquet theory is of potential importance in the study of networks with correlations, since the form of the correlation equation is quite standard in this kind of problems (see e.g. \cite{rodriguez-tuckwell:98} in the case of small noise expansion). This approach can also be applied to the study of the stability of solutions in networks involving Hebbian learning, since the connectivity weights evolution depends on the firing correlations and the stability of the weights involves very similar equations as the ones we studied here. 
 
 The need for  extensions of the present study  to more realistic Markovian models involving more relevant biologically realistic transitions is a direct consequence of the present work. Indeed, we have seen that the transitions chosen in the Markovian model are only one-step jumps. In biological terms, this assumption means that when $n_i(t)$ neurons are active in population at time $i$, at least $n_i(t)-1$ neurons are active at time $t+dt$, i.e. keep firing, which is not biologically realistic since after firing, a neuron immediately returns to quiescence. Therefore extending the framework to a more complex state space of the Markov chain would be of great interest for the development of these models towards more realistic biological grounds, and would probably allow a better assessment of the properties of cortical areas.   
 
 Finally, in order to go beyond the description of the moments of the Markov chain, we believe that one shall turn towards alternative ways to derive mean-field equations for such systems. A particularly interesting way to study such systems we are currently exploring builds upon recent methods developed for the study of Markov chains, mainly in the field of queueing theory, such as fluid limits and large deviations approaches. The fluid limit technique is based on a space-time proper rescaling depending on the process under consideration, that yields deterministic or stochastic limits of the activity of the network when the number of neurons tends to infinity. Large deviation techniques allow one to derive the finite-size corrections (see e.g. \cite{wainrib:2010} in a case of stochastic ion channels) and is a natural extension of our work.

 \noindent{\bf Acknowledgments:}
This work was supported by NSF DMS 0817131.
 
 \appendix
\section{An alternative derivation of the moment equations}\label{append:RodigTuck}
\review{In this appendix we show that the moment equation corresponding to the Markov chain governed by equations \eqref{eq:Master} can be derived from a Rodriguez-Tuckwell moment expansion on the Langevin approximation of the Markov chain. Following Kurtz approach in~\cite{kurtz:76}, we know that the dynamics of the our rescaled variables $p_i=n_i/N_i$, which is a Markov chain in the initial setting, approaches as $N$ goes to infinity a continuous diffusion process $(X_t)_{t\geq 0}$ satisfying the equation:
\begin{equation}\label{eq:Langevin}
	dX_t^i=\Big(-\alpha_i\,X^i_t + f_i(s_i(t,X_t))\Big)\;dt + \frac 1 N \sqrt{\alpha_i\,X^i_t + f_i(s_i(t,X_t))}\;dW^i_t
\end{equation}
where $S_i(t,X_t)=\sum_{j=1}^N w_{ij}X^j_t + I_i(t)$.}

\review{Following the works of Rodriguez and Tuckwell~\cite{rodriguez-tuckwell:96,rodriguez-tuckwell:98}, we derive from this equation the dynamical system governing the approximate moments of $X$. Denoting by $m_j$ the mean value of $X_j$ and by $C_{ij}$ the correlation between $X_i$ and $X_j$, a direct application of Rodriguez and Tuckwell formula applied to our particular form of dynamical system yields:
\begin{equation*}
	\begin{cases}
		\der{m_j}{t}&=-\alpha_j m_j +f_j(s_j) + \frac 1 2 f_j''(s_j)\sum_{l=1}^N \sum_{p=1}^N w_{jp}w_{jl} C_{lp}\\
		\der{C_{ij}}{t}&=-(\alpha_i+\alpha_j)C_{ij} + f_i'(s_i)\sum_{l=1}^N w_{il} C_{lj} + f_j'(s_j)\sum_{l=1}^N w_{jl} C_{li} \\
		& \quad + \delta_{ij} \Big[\alpha_i m_i +f_i(s_i) + \frac 1 {N_i N_j} f_i''(s_i)\sum_{k,l}w_{ik}w_{il} C_{kl}\Big]
	\end{cases}
\end{equation*}
Where $s_i=\sum_{j=1}^N w_{ij}m^j_t + I_i(t)$.  These equations therefore appear as a perturbation of the BCC equations with an additional nonlinear term in the dynamics or order two. Truncation at order $1$ in the small parameter $1/N$ yields exactly BCC equations. Moreover, it is interesting to note that these equations again correspond in the limit $N\to \infty$, to the infinite model described in section \ref{section:Infinite}. }

\section{Kronecker Algebra: Some useful properties}\label{append:Kronecker}
In this appendix, we review and prove some useful properties of Kronecker products of matrixes. We recall the definition of the the function $\Vect$ transforming a $M \times N$ matrix into a $M\,N$-dimensional column vector, as defined in  \cite{neudecker:69}:
 \[\Vect: \begin{cases}
 \R^{M\times N} &\mapsto \R^{M\,N}\\
 X & \mapsto [X_{11}, \ldots, X_{M1}, X_{12}(t), \ldots, X_{M2}(t), \ldots X_{1N}(t), \ldots, X_{MN}(t)]^T
 \end{cases}\]
 Let us now denote by $\otimes$ the Kronecker product defined for $A\in \R^{m\times n}$ and $B \in \R^{r\times s}$ as the $(m\,r) \times (n\,s)$ matrix:
 \[A\otimes B := 
 \left(
 \begin{array}{cccc}
 	a_{11} B & a_{12} B & \cdots & a_{1n}B\\
 	a_{21} B & a_{22} B & \cdots & a_{2n}B\\
 	\vdots & \vdots & \ddots & \vdots\\
 	a_{m1} B & a_{m2} B & \cdots & a_{mn}B\\
 \end{array}
 \right)\]
 For standard definitions and identities in the field of Kronecker products, the reader is referred to \cite{brewer:78}. We recalled in the main text the following identities for $A,B,D,G,X \in \R^{M\times M}$, $I_M$ be the $M\times M$ identity matrix and $A\cdot B$ or $A\,B$ denote the standard matrix product:
 \begin{equation}\label{eq:Kronecker2}
 	\begin{cases}
 		\Vect (AXB) = (B^T \otimes A ) \Vect(X)\\
 		A\oplus A =A\otimes I_M + I_M \otimes A\\
 		(A \otimes B)\cdot (D \otimes G) = (A\cdot D) \times (B\cdot G)
 	\end{cases}.
 \end{equation}
 The relationship $\oplus$ is called Kronecker sum. 

\begin{proposition}\label{prop:KronEigen}
	Let $A$ and $B$ in $\R^{M\times M}$, and assume that $A$ has the eigenvalues $\{\lambda_i;\;i=1,\ldots,M\}$ and $B$ the eigenvalues $\{\mu_i;\;i=1,\ldots,M\}$. Then we have:
	\begin{itemize}
		\item $A\otimes B$ has the eigenvalues $\{\lambda_i\mu_j;\;(i,j)\in \{1,\ldots,M\}^2\}$.
		\item $A\oplus B$ has the eigenvalues $\{\lambda_i+\mu_j;\;(i,j)\in \{1,\ldots,M\}^2\}$.
	\end{itemize}
\end{proposition}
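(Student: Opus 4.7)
The plan is to prove both statements simultaneously via Schur triangularization, together with the mixed-product identity $(A\otimes B)(D\otimes G)=(AD)\otimes(BG)$ recorded in \eqref{eq:Kronecker2}. By Schur's theorem there exist unitary matrices $U,V\in\mathbb{C}^{M\times M}$ and upper-triangular matrices $T_A,T_B$ such that $A=UT_AU^{*}$ and $B=VT_BV^{*}$, with the diagonal entries of $T_A$ (resp.\ $T_B$) being exactly the eigenvalues $\lambda_i$ (resp.\ $\mu_i$), counted with algebraic multiplicity.

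For the Kronecker product, I would apply the mixed-product identity twice to obtain
\[
A\otimes B \;=\; (U\otimes V)\,(T_A\otimes T_B)\,(U^{*}\otimes V^{*}),
\]
and note that $U\otimes V$ is unitary with inverse $U^{*}\otimes V^{*}$, again by the mixed-product identity applied to $(U\otimes V)(U^{*}\otimes V^{*})=I_M\otimes I_M=I_{M^2}$. Hence $A\otimes B$ is similar to $T_A\otimes T_B$. A direct inspection of the block definition of the Kronecker product shows that the product of two upper-triangular matrices is block upper-triangular with triangular blocks on the diagonal, and in particular its diagonal entries are precisely $(T_A)_{ii}\,(T_B)_{jj}=\lambda_i\mu_j$ for $(i,j)\in\{1,\ldots,M\}^2$. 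Since similar matrices share their spectrum (with multiplicities), this proves the first assertion.

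For the Kronecker sum, I would expand $A\oplus B=A\otimes I_M+I_M\otimes B$ and use the same conjugation. Since $I_M=VI_MV^{*}=UI_MU^{*}$, the mixed-product identity yields
\[
A\otimes I_M=(U\otimes V)(T_A\otimes I_M)(U^{*}\otimes V^{*}),\qquad I_M\otimes B=(U\otimes V)(I_M\otimes T_B)(U^{*}\otimes V^{*}),
\]
so that $A\oplus B$ is similar, via $U\otimes V$, to $T_A\otimes I_M+I_M\otimes T_B$. Both summands are upper-triangular, hence so is their sum, with diagonal entries equal to $\lambda_i+\mu_j$ for $(i,j)\in\{1,\ldots,M\}^2$. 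This gives the spectrum of $A\oplus B$.

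There is essentially no hard step; the only delicate point is the combinatorial bookkeeping showing that $T_A\otimes T_B$ is upper-triangular with diagonal entries indexed in the order $(i,j)\mapsto M(i-1)+j$, so that all $M^2$ pairs $(\lambda_i\mu_j)$ (resp.\ $\lambda_i+\mu_j$) appear with the correct multiplicities. Once this enumeration is made explicit from the block definition of $\otimes$, both conclusions follow immediately.
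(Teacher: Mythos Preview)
Your proof is correct and takes a genuinely different route from the paper. The paper constructs explicit eigenvectors: given eigenvectors $u$ of $A$ and $v$ of $B$, it shows that $\Vect(u\,v^T)$ is an eigenvector of $A\oplus B$ with eigenvalue $\lambda_i+\mu_j$ (and similarly $\Vect(v\,u^T)$ for $A\otimes B$ with eigenvalue $\lambda_i\mu_j$), then counts $M^2$ such vectors and concludes. Your argument instead conjugates $A\otimes B$ and $A\oplus B$ by $U\otimes V$ to upper-triangular form via Schur and reads the spectrum off the diagonal.

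What this buys: the paper's argument is shorter and yields concrete eigenvectors, which is useful elsewhere in the paper (e.g.\ in the proof of Theorem~\ref{thm:WilsonBuiceCowan}, where the eigenvector $\Vect(\xi(0)\xi(0)^T)$ is invoked). However, the paper's counting step ``there are exactly $M^2$ linearly independent matrices $z$ possible built in the proposed fashion'' tacitly assumes that $A$ and $B$ each have a full basis of eigenvectors, i.e.\ are diagonalizable; for defective matrices this count fails and the argument does not immediately give all eigenvalues with their algebraic multiplicities. Your Schur-based approach avoids this issue entirely and delivers the spectrum with correct multiplicities for arbitrary $A,B$, at the cost of not producing explicit eigenvectors.
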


\begin{proof}
Let $\lambda_i$ (resp.$\mu_j$) be an eigenvalue of $A$ (resp. $B$) with eigenvector $u$ (resp. $v$), and define the matrix $z=u\cdot v^T$ (i.e. $z_{ij} = u_i \, v_j$). We have:
\begin{align*}
	\Big(A\oplus B \Big) \, \Vect (z) &= \Vect\Big(A \cdot u\cdot v^T + u\cdot v^T \cdot B^T\Big)\\
	&= \Vect\Big(\lambda_i u \cdot v^T + u \cdot (B\cdot v)^T\Big) \\
	&= \lambda_i \Vect(z) + \Vect\Big(u (\mu_j v^T)\Big)\\
	&= (\lambda_i + \mu_j) \Vect(z).
\end{align*}
which entails that $\Vect(z)$ is an eigenvector of $A(\nu)\oplus A(\nu)$ associated with the eigenvalue $\lambda_i + \lambda_j$. Similarly, we have for $z=v\cdot u^T$:
\begin{align*}
	\Big(A\otimes B \Big) \, \Vect (z) &= \Vect\Big(B \cdot v\cdot u^T A^T\Big)\\
	&= \Vect\Big((\mu_j v) \cdot (\lambda_i \,u^T) \Big) \\
	&= \lambda_i\mu_h \Vect(z)
\end{align*}

The dimension of $A\oplus B$ and $A\otimes B$ is $M^2$ and there are exactly $M^2$ linearly independent matrices $z$ possible built in the proposed fashion, therefore we identified all possible eigenvalues.
\end{proof}

\begin{theorem}\label{theo:PsiKron}
	Let $\Phi(t)$ be the solution of the matrix differential equation:
	\begin{equation*}
		\begin{cases}
			\der{\Phi(t)}{t} &= A(t)\Phi(t)\\
			\Phi(0) &= I_M
		\end{cases}
	\end{equation*}
	and $\Psi(t)$ the solution of:
	\begin{equation*}
		\begin{cases}
			\der{\Psi(t)}{t} &= (A(t)\oplus A(t))\Psi(t)\\
			\Psi(0) &= I_{M^2}			
		\end{cases}
	\end{equation*}
	We have $\Psi(t)=\Phi(t)\otimes \Phi(t)$.
\end{theorem}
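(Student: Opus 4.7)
The plan is to verify that $\Phi(t)\otimes\Phi(t)$ satisfies the same linear matrix ODE and initial condition as $\Psi(t)$, and then to conclude by the uniqueness of solutions to a well-posed linear initial value problem.

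First, I would check the initial condition: since $\Phi(0)=I_M$, we have $\Phi(0)\otimes\Phi(0)=I_M\otimes I_M=I_{M^2}$, which matches $\Psi(0)$. Next, I would differentiate the candidate $\Phi(t)\otimes\Phi(t)$. Since the Kronecker product is bilinear and entrywise polynomial in the factors, the standard product rule applies and gives
\begin{equation*}
\frac{d}{dt}\bigl(\Phi(t)\otimes\Phi(t)\bigr)=\Phi'(t)\otimes\Phi(t)+\Phi(t)\otimes\Phi'(t)=\bigl(A(t)\Phi(t)\bigr)\otimes\Phi(t)+\Phi(t)\otimes\bigl(A(t)\Phi(t)\bigr).
\end{equation*}

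Then I would apply the mixed-product identity $(X\otimes Y)(Z\otimes W)=(XZ)\otimes(YW)$ from \eqref{eq:Kronecker2} with the identity matrix in the appropriate slot: $A(t)\Phi(t)\otimes\Phi(t)=(A(t)\otimes I_M)(\Phi(t)\otimes\Phi(t))$ and $\Phi(t)\otimes A(t)\Phi(t)=(I_M\otimes A(t))(\Phi(t)\otimes\Phi(t))$. Summing and using the definition of the Kronecker sum $A(t)\oplus A(t)=A(t)\otimes I_M+I_M\otimes A(t)$ yields
\begin{equation*}
\frac{d}{dt}\bigl(\Phi(t)\otimes\Phi(t)\bigr)=\bigl(A(t)\oplus A(t)\bigr)\bigl(\Phi(t)\otimes\Phi(t)\bigr).
\end{equation*}
Thus $\Phi(t)\otimes\Phi(t)$ solves the same linear ODE as $\Psi(t)$ with the same initial condition, and uniqueness of solutions of linear ODEs with continuous coefficients forces $\Psi(t)=\Phi(t)\otimes\Phi(t)$.

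I do not anticipate a serious obstacle; the only step worth checking carefully is the product rule for the Kronecker product, but this reduces to the ordinary product rule applied entrywise to the blocks $a_{ij}(t)B(t)$ that define $A(t)\otimes B(t)$. The two Kronecker identities from \eqref{eq:Kronecker2} do the rest of the work.
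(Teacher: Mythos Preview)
Your proposal is correct and follows essentially the same approach as the paper's proof: differentiate $\Phi(t)\otimes\Phi(t)$ via the product rule, apply the mixed-product identity to factor out $A(t)\otimes I_M$ and $I_M\otimes A(t)$, recognize their sum as $A(t)\oplus A(t)$, check the initial condition $I_M\otimes I_M=I_{M^2}$, and conclude by uniqueness of solutions to the linear ODE.
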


\begin{proof}
Indeed, we have, using the basic properties of the Kronecker product recalled in equation \eqref{eq:Kronecker}
\begin{align*}
	\der{\Phi(t)\otimes \Phi(t)}{t} &= \der{\Phi(t)}{t} \otimes \Phi(t) + \Phi(t) \otimes \der{\Phi(t)}{t}\\
	&= \big(A(\nu(t))\cdot \Phi(t)\big)\otimes \Phi(t) + \Phi(t) \otimes \big(A(\nu(t))\cdot \Phi(t)\big) \\
	&= \big(A(\nu(t))\cdot \Phi(t)\big)\otimes \big(I_M\cdot\Phi(t)\big) + \big(I_M\cdot\Phi(t)\big) \otimes \big(A(\nu(t))\cdot \Phi(t)\big)\\ 
	&= (A(\nu(t))\otimes I_M)\cdot (\Phi(t)\otimes \Phi(t)) + (I_M\otimes A(\nu(t))) \cdot (\Phi(t)\otimes \Phi(t))\\
	&= \big(A(\nu(t))\otimes I_M + I_M\otimes A(\nu(t))\big) \cdot \big(\Phi(t)\otimes\Phi(t)\big)\\
	&= \big(A(\nu(t))\oplus A(\nu(t))\big) \cdot \big(\Phi(t)\otimes\Phi(t)\big)
\end{align*}
Therefore $\Phi(t)\otimes \Phi(t)$ satisfies the same differential equation as $\Psi(t)$ and moreover, $\Phi(0)\otimes \Phi(0)=I_M\otimes I_M=I_{M^2}$, and therefore by existence and uniqueness of the resolvent, $\Psi(t)=\Phi(t)\otimes \Phi(t)$.
\end{proof}

\section{Genericity of the Hopf bifurcation found}\label{append:Calculations}
 In this appendix we derive the expression of the first Lyapunov exponent of the bifurcation, which proves that the existence of the Hopf bifurcation exhibited in section \ref{ssect:OnePopHopf}. In that section, we derived the expression of the Jacobian matrix at the considered fixed point:
 \[J=\left (\begin{array}{cc}
 	-\alpha + w\, f'(s) & \frac 1 2 \,f''(s)\,w^2\\
 	2f'(s)\frac w N & 2(-\alpha+w\, f'(s))
 \end{array}\right)\]
 At the bifurcation point, we have $-\alpha^*+w\,f'(s)=0$, and therefore at this point the Jacobian matrix reads:
 \[J_0=\left (\begin{array}{cc}
 	0 & \frac 1 2 \,f''(I)\,w^2\\
 	2f'(I)\frac w N & 0
 \end{array}\right)\]
 The eigenvalues of this matrix under the assumptions of section \ref{ssect:OnePopHopf} are $\pm i \omega_0$ where $\omega_0=\sqrt{-f'(I)f''(I)w^3/N}$. We define $q$ the right eigenvector of $J_0$ associated with $i \omega_0$:
 \[q=\Big(-\frac{i}{\sqrt{2\,f'(I)\,w/N}} \qquad 1\Big)^T\]
 and $p$ the right eigenvector of $J_0^T$ associated with the eigenvalue $i\omega_0$:
 \[p=\Big(\frac{i}{\sqrt{-\frac 1 2 f''(I)\,w^2}} \qquad 1\Big)^T\]
 For the sake of simplicity, we also name the components of the vector field of the system:
 \[\begin{cases}
 	f_1(\binom{\nu}{C}, \alpha^*) &= w\,f'(I)\,nu + f(w\nu+I)+\frac 1 2 f''(w\nu+I)\,w^2\,C\\
 	f_2(\binom{\nu}{C}, \alpha^*) &= -2\,w\,f'(I)\,C + 2\,f'(w\nu+I)\,w\,\big(C+\frac{\nu}{N}\big)\\ 
 \end{cases}.\]
 
 Following \cite{kuznetsov:98}, we define $B(\binom{x_1}{y_1}, \binom{x_2}{y_2})$ and $C(\binom{x_1}{y_1}, \binom{x_2}{y_2},\binom{x_3}{y_3})$ the second and third derivatives of the vector field, which are bi- and tri-linear forms. We have the following expressions for these multilinear functions:
 
 \[\begin{cases}
 	B_1(\binom{x_1}{y_1}, \binom{x_2}{y_2}) &= w^2\,f''(I)\,x_1\,x_2 + \frac 1 2\,f''(I)\,w^2\,(x_1\,y_2 +y_1\,x_2)\\
 	B_2(\binom{x_1}{y_1}, \binom{x_2}{y_2}) &= 2\,f''(I)\,w^2\,(x_1\,y_2+y_1\,x_2) + \frac 4 N\,f''(I)\,w^2\,x_1 \, x_2\\
 	C_1(\binom{x_1}{y_1}, \binom{x_2}{y_2}, \binom{x_3}{y_3}) &= f^{(3)}(I)\,w^3\,x_1\,x_2\,x_3 + \frac 1 2 \, f^{(4)}(I)\,w^4\,(x_1\,x_2\,y_3+x_1\,y_2\,x_3+y_1\,x_2\,x_3)\\
 	C_2(\binom{x_1}{y_1}, \binom{x_2}{y_2}, \binom{x_3}{y_3}) &= \frac 6 N\,f^{(3)}(I)\, w^3 \, x_1\,x_2\,x_3 + 2\,f^{(3)}(I)\,w^3 \,(x_1\,x_2\,y_3+x_1\,y_2\,x_3+y_1\,x_2\,x_3)
 \end{cases}\]
 We are now in position to compute the first Lyapunov exponent $l_1(0)$ using the formula:
 \begin{align*}
 	l_1(0)&=\frac 1 {2\omega_0} \textrm{Re}\Big ( \langle p, C(q,q,\overline{q})\rangle - 2 \langle p, B(q, J_0^{-1} B(q, \overline{q}))\rangle + \langle p, B(\overline{q}, (2\,i\,\omega_0 Id - J_0)^{-1}B(q,q))\rangle \Big)\\
 	&= \frac 1 {2\omega_0} (\mathcal{A}-2\mathcal{B}+\mathcal{C})
 \end{align*}
 where $\langle x, y \rangle$ denotes the complex inner product $\overline{x}^T\cdot y$ and the sum of three terms denoted $\mathcal{A}$, $\mathcal{B}$ and $\mathcal{C}$ are the real parts of the terms involved in the expression of the Lyapunov exponent. After straightforward but tedious calculations (that can be conveniently performed using a formal calculation tool such as Maple), we obtain:
 \[
 \begin{cases}
 	\mathcal{A} &= \frac{w^2 \, N}{f'(I)^2} f^{(3)}(I)\,f'(I)\\
 	\mathcal{B} &= \frac{w^2 \, N}{f'(I)^2} \left( -\frac{f''(I)^2}{\omega_0} - \frac 1 2 \frac{f^{(3)}(If'(I))}{\omega_0} + 2 f''(I)^2\right)\\
 	\mathcal{C} &= -\frac{w^2\, N }{f'(I)^2}\;\frac{2 f''(I)^2}{3}
 \end{cases}
 \]
 which yields the expression for the Lyapunov exponent:
 \begin{align*}
 	l_1(0) &= \frac{1}{2\,\omega_0} \left [ \mathcal{A} -2\,\mathcal{B}+\mathcal{C}\right ]\\
 	& = \frac{w^2\,N}{2\,\omega_0\,f'(I)^2} \left [ f^{(3)}(I)f'(I) \left(1+\frac {1}{\omega_0}\right) + f''(I)^2\left(\frac{2}{\omega_0} - \frac{14}{3}\right)\right]
 \end{align*}
 
 \section{Finite-size effects in BCC two-populations Model I}\label{appen:2PopsBCFinite}
 In this appendix, we study the two-populations BCC system corresponding to Model I and show that the finite-size effects are closely related to what is observed in Bressloff model as studied in section \ref{sec:TwoPopsFinite}. BCC finite-size equations read:
 \begin{equation}\label{eq:BuiceCowan2Pops}
 	\begin{cases}
 		a_1'&=-\alpha a_1+f(s_1)+\frac{1}{2}\,f''(s_1)\,(w_{11}^2\,c_{11}+w_{12}^2\,c_{22}+2\,w_{12}\,w_{11}\,c_{12})\\
 		a_2'&=-\alpha a_2+f(s_2)+\frac{1}{2}\,f''(s_2)\,(w_{22}^2\,c_{22}+w_{21}^2\,c_{11}+2\,w_{22}\,w_{21}\,c_{12})\\
 		c_{11}'&=-2\alpha c_{11}+2\,f'(s_{1})\,(w_{11}\,c_{11}+w_{12}\,c_{12})+2\,f'(s_1)\,a_1\,w_{11}\,n_e\\
 		c_{22}'&=-2\alpha c_{22}+2\,f'(s_{2})\,(w_{21}\,c_{12}+w_{22}\,c_{22})+2\,n_i\,f'(s_2)\,w_{22}\,a_2\\
 		c_{12}'&=-2\alpha c_{12}+f'(s_{1})\,(w_{11}\,c_{12}+w_{12}\,c_{22}+a_2\,w_{12}\,n_i) \ldots \\
 		& \qquad + f'(s_2)\,(w_{21}\,c_{11}+w_{22}\,c_{12}+w_{21}\,a_{1}\,n_e)\\
 	\end{cases}
 \end{equation}
 where we denoted:
 \[
 \begin{cases}
 	s_1 &= w_{11}\,a_1+w_{12}\,a_2+i_1\\
 	s_2 &= w_{21}\,a_2+w_{22}\,a_2+i_2
 \end{cases}
 \]
 Similarly to Bressloff case, BCC model features two families of limit cycles. One of these branches corresponds exactly to the branch of limit cycles of WC system starting from a Hopf bifurcation and disappearing through a homoclinic bifurcations. Two additional Hopf bifurcations appear related to the family of periodic orbit corresponding to the correlation-induced cycle evidence in the analysis of the infinite-size system. This branch of limit cycles exist whatever $n$, and loses stability through a Neimark-Sacker bifurcation as the number of neurons increases. This bifurcation generates chaos for large enough networks, which clearly does not exists in WC system. 
 \begin{figure}[!h]
 	\centering
 		\subfigure[Wilson and Cowan ]
 		{\includegraphics[width=.45\textwidth]{WC2PopsArranged.pdf}}
 		\subfigure[BCC ]
 		{\includegraphics[width=.45\textwidth]{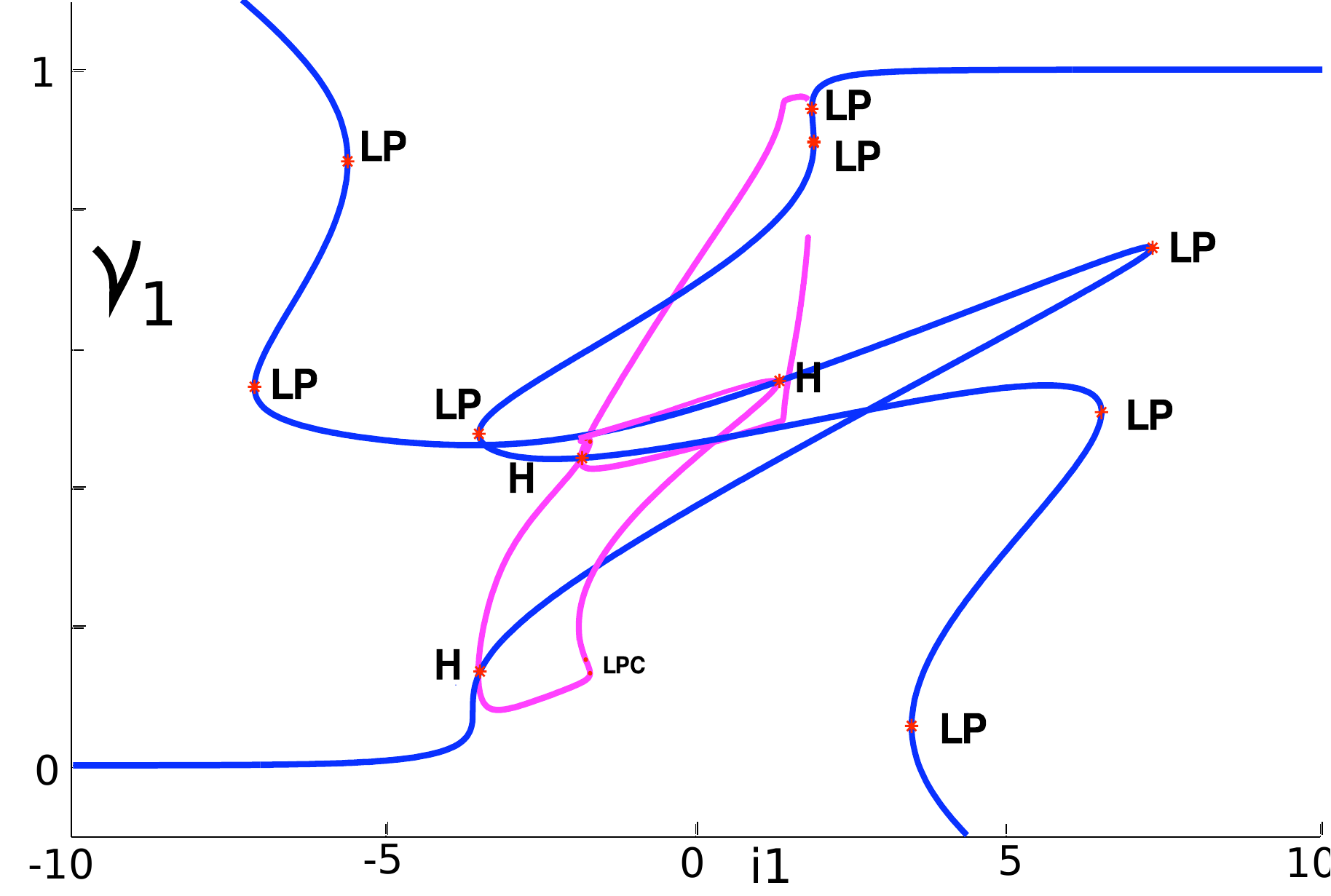}}
 	\caption{Bifurcation diagram for the BCC system. Blue lines represent the equilibria, pink lines the extremal values of the cycles in the system. Bifurcations of equilibria are denoted with a red star, LP represents a saddle-node bifurcation (Limit Point), H a Hopf bifurcation. The four Hopf bifurcations share two families of limit cycles. The branch corresponding to the smaller values of $i_1$ undergoes two fold of limit cycles, and the other branch of limit cycle a Neimark Sacker (Torus) bifurcation. }
 	\label{fig:BCBifs0.2}
 \end{figure}
 
 This family of limit cycles presents stability for networks containing between 100 and 17.000 neurons, corresponding to a finite size effect that appears only in the region of interest of the cortical columns. As the number of neurons increase, the system keep the same number and type of bifurcations, and the infinite model appears to be a singular case where different bifurcations meet (see Fig.~\ref{fig:BCCod2}), and very large networks lose the property of presenting a stable cycle, and present a chaotic behavior.
 
 \begin{figure}[!h]
 	\centering
 		\subfigure[Codimension 2 bifurcations]{\includegraphics[width=.80\textwidth]{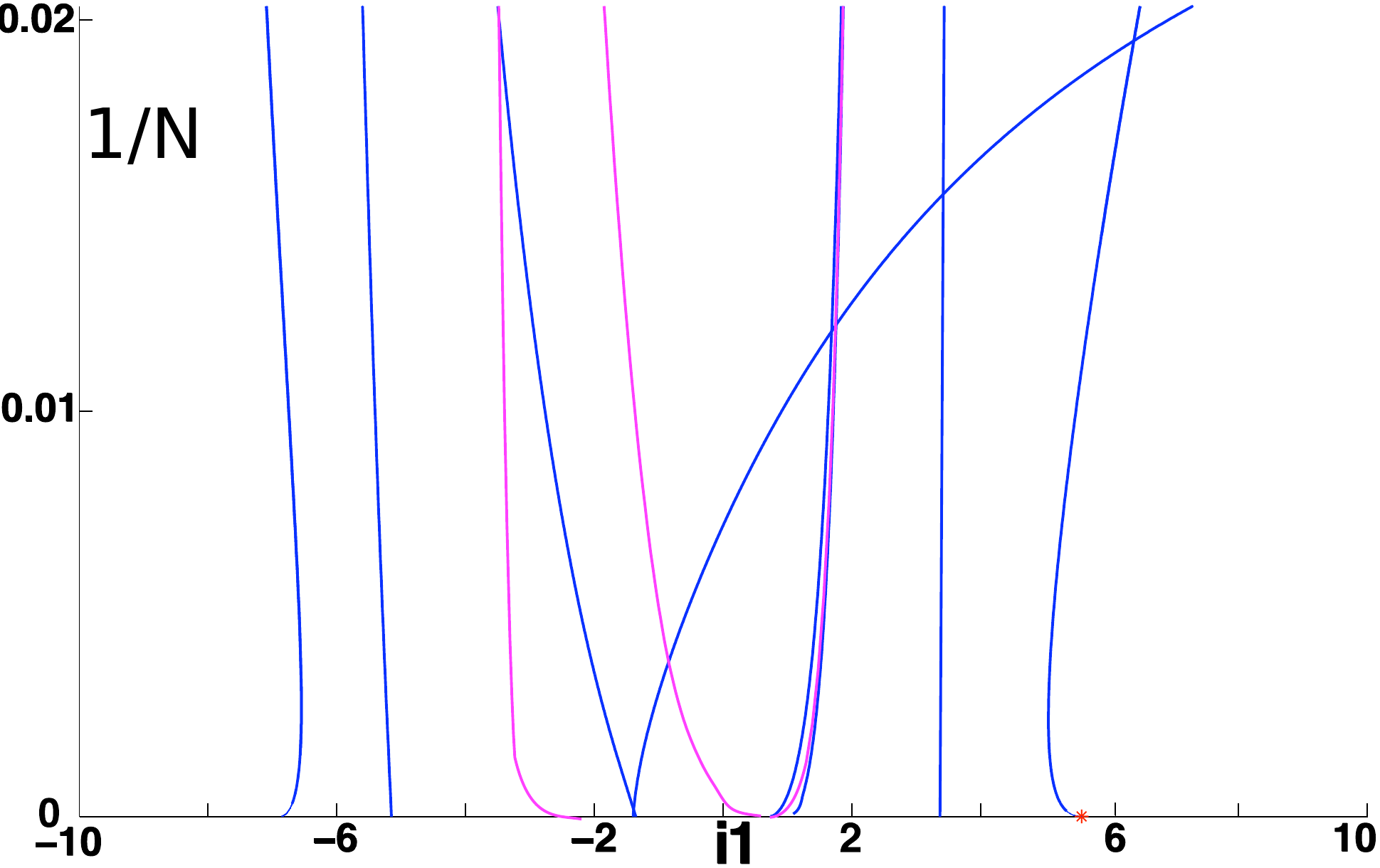}}\\
 		\subfigure[The stable cycle yielding chaos]{\includegraphics[width=.45\textwidth]{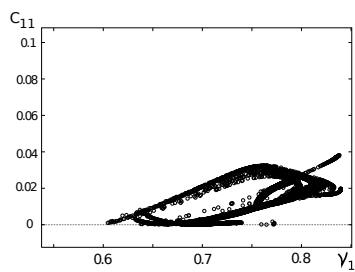}}
 	\caption{Codimension two bifurcations and chaotic behavior as the number of neuron increases.}
 	\label{fig:BCCod2}
 \end{figure}
 
\bibliographystyle{siam}
 % \bibliography{/Users/johnny/Work/Bibliography/odyssee,/Users/johnny/Work/Bibliography/perso}

 \end{document}